\theoremstyle{plain}
\newtheorem{theorem}{Theorem}[section]
\theoremstyle{remark}
\newtheorem{thm}{Theorem}[section]
\newtheorem{lem}[thm]{Lemma}
\newtheorem{prp}[thm]{Proposition}
\newtheorem{remark}{Remark}[section]
\theoremstyle{definition}
\newtheorem{defn}{Definition}[section]
\newtheorem{theorem*}{Theorem}
\newtheorem{lemma*}{Lemma}
\newtheorem{proposition*}{Proposition}
\numberwithin{equation}{section} \theoremstyle{remark}
\newcommand*{\email}[1]{\href{mailto:#1}{\nolinkurl{#1}} }
\def\Ee{\mathbb{E}}
\def\e{\varepsilon}
\def\1{{\bf 1}}
\def\N{{\mathbb N}}
\def\R{\mathbb R} \def\EE{\mathbb E}
\def\<{\langle} \def\>{\rangle}  
\def\d{\text{\rm{d}}}    
\def\f{\frac}
\def\Ric{\text{\rm{Ric}}}
\def\ric{\mathop {\rm ric}}
\def\e{\text{\rm{e}}}   
      \def\Ric{\text{\rm{Ric}}}
\def\cut{\text{\rm{cut}}}
\def\Pp{\mathbb{P}}
\def\div{\text{\rm div}}
\def\cut{\text{\rm cut}}
\def\supp{\text{\rm supp}}
\def\e{\varepsilon}
\def\w{\varpi}
\def\so{{\mathfrak {so}}}
\def\paral{/\kern-0.55ex/}
\def\parals_#1{/\kern-0.55ex/_{\!#1}}
\def\bparals_#1{\breve{/\kern-0.55ex/_{\!#1}}}
\def\ric{\text{\rm ric}}
\begin{document}

\begin{frontmatter}
\title{Logarithmic heat kernel estimates without curvature restrictions}
\runtitle{A sample running head title}

\begin{aug}
\author[A]{\fnms{Xin}~\snm{Chen}\ead[label=e1]{chenxin217@sjtu.edu.cn}},
\author[B]{\fnms{Xue-Mei}~\snm{Li}\ead[label=e2]{xue-mei.li@epfl.ch}}
\and
\author[C]{\fnms{Bo}~\snm{Wu}\ead[label=e3]{wubo@fudan.edu.cn}}
\address[A]{School of Mathematical Sciences,
 Shanghai Jiao Tong University\printead[presep={,\ }]{e1}}

\address[B]{ Imperial College London and EPFL\printead[presep={,\ }]{e2}}

\address[C]{School of Mathematical Sciences, Fudan University\printead[presep={,\ }]{e3}}
\end{aug}

\begin{abstract}
The main results  of the article are short time estimates and asymptotic estimates for the first two order derivatives of the logarithmic
 heat kernel of a complete Riemannian manifold. We remove all  curvature restrictions  and also develop several techniques.

A basic tool developed here is intrinsic stochastic variations with prescribed second order covariant differentials, allowing to obtain a path integration representation  for the second order derivatives of the heat semigroup $P_t$ on a complete Riemannian manifold, again without any assumptions on the curvature. The novelty is the introduction of an $\epsilon^2$ term in the variation allowing  greater control. We also construct a family of cut-off stochastic processes adapted to an exhaustion by compact subsets with smooth boundaries, each  process is constructed path by path and differentiable in time, furthermore the differentials have locally uniformly bounded moments with respect to the Brownian motion measures, allowing to by-pass the lack of continuity of the exit time of the Brownian motions on its initial position.
\end{abstract}

\begin{keyword}[class=MSC]
\kwd[Primary ]{60Gxx}
\kwd{60Hxx}
\kwd[; secondary ]{58J65, 58J70}
\end{keyword}

\begin{keyword}
\kwd{Riemannian manifold}
\kwd{Heat kernel estimate}
\kwd{Curvature}
\kwd{Gradient formula}
\end{keyword}

\end{frontmatter}

\section{Introduction}

Let $(M,g)$ be an $n$-dimensional connected and
complete Riemannian  manifold endowed with the Levi-Civita connection $\nabla$.  Let $\Delta$  denote the  Laplace-Beltrami operator, and
 let $p(t,x,y)$ denote its heat kernel, by which we mean the minimal positive fundamental solution to the equation $\f \partial {\partial t} =\f 12 \Delta $.
The objective of this article is to provide estimates on the first and the second order gradients of $\log p(t,x,\cdot)$, without imposing any curvature conditions on $M$.
For a fixed $x\in M$, we  use the abbreviation $\log p$ for the logarithmic heat kernel
$\log p(t,x,\cdot)$ and use $\nabla \log p$ and $\nabla^2 \log p$ for its first and second  order derivatives respectively.

We  begin with explaining some of the motivations and potential applications.
Let  $o\in M$ be fixed, we denote
$$P_o(M):=\{\gamma\in C([0,1];M): \gamma(0)=o\}$$
the based path space over $M$.
Likewise, let $L_o(M)$ denote the based loop space over $M$,
$$L_o(M):=\{\gamma\in P_o(M):\ \ \gamma(0)=\gamma(1)=o\}.$$
A classical problem is to seek a suitable probability measure on $P_o(M)$ or $L_o(M)$, 
with which analysis on these infinite dimensional non-linear spaces can be made and understanding of the path spaces can be furthered.
If $M$ is compact or more generally with bounded geometry, a natural candidate for the probability measure on $L_o(M)$ is the probability distribution of the diffusion process with the infinitesimal operator
$$L:=\f 12\Delta + \nabla \log p(1-t,\cdot, o)$$
and the initial value $o$. This is  the Brownian bridge measure.
Since there is no analogue of a Lebesque  measure,  translation invariant, on $L_o(M)$,  the Brownian bridge measure is essentially the canonical measure to use. Indeed, for $M=\R^n$ the Brownian bridge measure is a Gaussian measure and it is quasi-invariant under translations of Cameron-Martin vectors.
To construct such a diffusion process, which is usually called  the Ornstein-Uhlenbeck process, we define a pre-Dirichlet form. This form will be called the Ornstein-Uhlenbeck (O-U) Dirichlet form.
To verify that the pre-Dirichlet form yields a Markov process, it is necessary to show it is closed -- a property following readily once we have  an integration by parts (IBP) formula. The key ingredient for such an IBP formula is suitable short time estimates on $\nabla \log p$ and $\nabla^2 \log p$.
We refer the reader to Aida \cite{A,A3}, Airault and Malliavin \cite{Airault-Malliavin}, Driver \cite{D2}, Hsu \cite{Hsu2}  and
Li \cite{Li2} for more detail.

Another interesting problem is to establish functional inequalities for
the O-U Dirichlet form. This includes the Poincar\'e inequality and logarithmic Sobolev inequality. They describe the long time behaviours of the associated diffusion process.  The logarithmic Sobolev inequality for Gaussian measures was  obtained by  Gross  in the celebrated paper \cite{Gro75}.
However,  this is not known to hold for loop space over a general manifold $M$. When $M$ was the hyperbolic space, Poincar\'e inequality  was shown to hold on $L_0(M)$ by the authors of the article \cite{CLW1} and Aida \cite{A4}. If $M$ was compact simply connected with strictly positive Ricci curvature, a weak Poincar\'e inequality with explicit rate function was also established by the authors of the article \cite{CLW2}.
It was shown in Gross \cite{Gross-91} that the Poincar\'e inequality for O-U Dirichlet form did not hold on $L_o(M)$ when $M$ was not simply connected. Soon after, Eberle
\cite{E1} constructed a simply connected compact manifold  for which the Poincar\'e inequality for O-U Dirichlet form did not hold on $L_o(M)$.
When the based manifold $M$ was compact, Aida \cite{A}, Eberle \cite{E}, Gong and Ma \cite{GM}, Gong, R\"ockner and Wu
\cite{Gong-Rockner-Wu} and Gross \cite{Gross-91} have obtained
weighted log-Sobolev inequalities or other different versions of modified log-Sobolev inequalities on $L_o(M)$.
 In all the
results mentioned above, the crucial ingredient was again the asymptotic estimates for $\nabla \log p$ and $\nabla^2\log p$.

We want to stress that all the results mentioned above have been established for the base manifold $M$ compact or  with some bounded geometry conditions,  since
the short time or asymptotic estimates for $\nabla \log p$ and $\nabla^2\log p$ were only known for manifolds with such restrictions.
Our immediate concern is to study the construction of diffusion processes and functional inequalities on
$L_o(M)$ without any bounded geometry conditions on $M$. We will obtain short time or asymptotic estimates for $\nabla \log p$ and $\nabla^2\log p$
in this paper. These estimates will be applied to study several problems on $L_o(M)$ in a forthcoming paper \cite{CLW}.

It is  intriguing that estimates for $\nabla \log p $ and  $\nabla^2 \log p$  are also main tools  for proving the continuous counterpart of Talagrand's conjecture for the hypercube $\Omega_n=\{-1, 1\}^n$ which we explain below. Let $\sigma^i$ denote  the configuration with the ith coordinate of $\sigma$ flipped and let $\sigma_i$ denote the i-th component of $\sigma\in \Omega_n$. Let $\mu_n\equiv 2^{-n}$ be the uniform measure on $\Omega_n$ which is reversible associated with the  generator
$Lf(\sigma):=\f 12 \sum_{i=1}^n (f(\sigma^i)-f(\sigma))$ where $\sigma\in\Omega_n$.
Setting
$T_sf(\sigma):=\int_{\Omega_n} f(\eta) \Pi_{i=1}^n (1+e^{-s}  \sigma_i \eta_i) d\mu_n(\eta)$, then
Talagrad's conjecture states that for any $s>0$ there exists a constant $c_s$ independent of the dimension $n$ such that
$\mu_n\Big(\Big\{\sigma: T_sf(\sigma)\ge t\Big\}\Big)\le  c_s \f 1 {t\sqrt { \log t}}$ for $ t>1$. The value $c_s$ is uniformly in the function $f$ with $\|f\|_{L^1(\mu_n)}=1$ and  in the dimension.
The continuous counter-part of the conjecture is for the  Ornstein-Uhlenbeck semi-group $T_t$ with generator $\Delta-x\cdot \nabla$
$$ \sup_{f \ge 0, \|f\|_{L^1(\gamma_n)}=1} \gamma_n
\left(\Big\{\sigma: T_sf(\sigma) \ge t\Big\}\right)  \le c_s \f 1 {t\sqrt { \log t}}, \qquad t \ge 2,$$
where $\gamma_n\sim N(0, I_{n\times n})$ is the standard $n$-dimensional normal distribution. This was proven to be affirmative in Ball, Barthe, Bednorz, Oleszkiewicz and Wolff \cite{BBBOW13}.
The dimension free best constants were given in Eldan and Lee \cite{EL15:focs} and Lehec \cite{Leh16} where the  key ingredients are:
\begin{itemize}
\item [(1)] For any $g\in L^1(\gamma_n)$ and  any $s>0$,  $\nabla^2 (\log T_s g)\ge -c_s^2\, \mathrm {Id}$,
\item [(2)] For any $g\in L^1(\gamma_n)$ non-negative and with  $\nabla^2
(\log g)\ge - \beta \, \mathrm {Id}$ with a $\beta>0$, one has
$\gamma_n (g\ge t)\le \f {C_\beta}{t \sqrt{\log t}}$ for any  $ t>1$.
\end{itemize}
Here  $\mathrm {Id}$ is the identity operator. Such estimates for non-Gaussian measures and also for the $M/M/\infty$ queue on $\N$ were obtained  by
Gozlan, Li, Madiman, Roberto and Samson \cite{GLMRS}.

\section{Main Results}

The short time and asymptotic estimates are presented in \eqref{e1-1}--\eqref{e1-3b} below.  To the best of  our knowledge,
such estimates  were obtained only  for a Riemannian manifold with bounded geometry including a compact Riemannian manifold.
 Gradient and Hessian estimates of the form  (\ref{e1-1}-\ref{e1-1a}) were proved by Sheu \cite{Sh} for $\R^n$ with a non-trivial Riemannian metrics where the objective was a non-degenerate parabolic PDEs  with bounded derivatives up to order three,  and  (\ref{e1-1}) for a compact Riemannian manifold can be found in Driver \cite{D2}, obtained using a result of Hamilton \cite{Hamilton93}, Corollary 1.3 and the Gaussian bounds on heat kernels, see e.g. Li and Yau  \cite{Li-Yau}, Cheeger and Yau \cite{Cheeger-Yau}, Davies \cite{Davies}, Setti \cite{Setti}, and Varopoulos \cite{Varopoulos1,Varopoulos2}.  The  estimate (\ref{e1-1a})  was shown in Hsu \cite{HsuEstimates} again for the compact case.
 For a non-compact Riemannian manifold with non-negative Ricci curvature,
\eqref{e1-1}
was obtained by Kotschwar \cite{Kotschwar}.
Under a bounded geometry condition together with a volume non-collapsing condition, similar estimates were obtained by Souplet and Zhang \cite{SZ} and Engoulatov \cite{En}.
For the heat kernel associated with the Witten Laplacian operator, these estimates  were proved by X.D. Li  \cite{XLi}
under a bounded geometry condition on the Bakry-Emery Ricci curvature.  In addition, in all the references mentioned above, suitable bounded geometry conditions were required. Likewise, the bounded geometry restrictions are used  to derive  differential Harnack inequalities and global heat kernel estimates,  by Cheeger, Gromov and Taylor \cite{Cheeger-Gromov-Taylor}, Cheng, P. Li and Yau \cite{CLY}, Hamilton \cite{Hamilton93},
P. Li and Yau \cite{Li-Yau}, they provide an important step toward \eqref{e1-1}--\eqref{e1-1a}.
Meanwhile, the asymptotic gradient estimate \eqref{e1-2b} was first shown in Bismut \cite{Bis} for a compact
Riemannian manifold. It was  extended to the hypo-elliptic heat kernel and the heat kernel on a vector bundle, for $M$ with bounded geometry, respectively by Ben Arous \cite{Ben}, Ben Arous and L\'eandre \cite{Ben-Le} and Norris \cite{Norris}, c.f. also Azencott \cite{Azencott-as}. 

The asymptotic second order gradient estimate \eqref{e1-3b} was established by Malliavin and Stroock \cite{MS} for
 a compact Riemannian manifold. For `asymptotically flat' Riemannian manifolds with poles  and bounded geometry this can be found in  Aida \cite{A3}.
On cut-locus estimates was studied by  Neel \cite{Neel07}.

A natural question is then whether the estimates \eqref{e1-1}--\eqref{e1-3b} still hold for a general non-compact Riemannian manifold?
Note that in Azencott \cite{Azencott},  it was illustrated  that
 Gaussian type heat kernel  estimates could not be automatically  extended to an arbitrary manifold and may fail if the completeness of the Riemannian metric was removed.

We state the main estimate. For any $y\in M$, let Cut(y) be the cut-locus of $y$ and $i(y)$ be the injectivity radius of $y$.

\begin{theorem}[Theorems \ref{thm6.7} and \ref{thm6.10}] \label{main}
Suppose that $M$ is a complete  Riemannian manifold with  Riemannian distance $d$. 

\begin{itemize}
\item [(1)]
For every compact subset $K$ of $ M$, the following statements hold.

\begin{itemize}
\item [(a)] There exists a
positive constant $C(K)$, which may depend on $K$, such that
\begin{eqnarray}
\label{e1-1}
\quad  \left|\nabla_x \log p(t,x,y)\right|_{T_x M}
&\le& C(K)\left(\frac{1}{\sqrt{t}}+\frac{d(x,y)}{t}\right),\\
\left|\nabla_x^2 \log p(t,x,y)\right|_{T_x M \otimes T_x M}&\le& C(K)\left(\frac{d^2(x,y)}{t^2}+\frac{1}{t}\right)\label{e1-1a}
\end{eqnarray}
for any $ x,y \in K $ and for any $\ t\in (0,1]$.
\item [(b)]  For each $y\in M$ and $\delta<i(y)$ there exist positive constants
$t_0$ and $C_1$ such that
\begin{equation}\label{eq3.29}\aligned
&\left|t\nabla_x^2 \log p(t,x,y)+\textbf{I}_{T_x M}\right|_{T_x M \otimes T_x M}\\
&\le C_1\left(d(x,y)+
\sqrt{t}\right),\quad \quad \ x\in B_y(\delta),\ t \in (0,t_0],\endaligned
\end{equation}
where $\textbf{I}_{T_x M}$ is the identical map on $T_x M$.
\end{itemize}

\item [(2)]
Let $y\in M$ and assume that $\tilde K \subset M\setminus \text{Cut}(y)$ is a compact set. Then
\begin{eqnarray}
\label{e1-2b}
&\lim_{t \downarrow 0}\sup_{x \in \tilde K}
\left|t\nabla_x\log p(t,x,y)+\nabla_x\left(\frac{d^2(x,y)}{2}\right)
\right|_{T_x M}=0,\\
&\lim_{t \downarrow 0}\sup_{x \in \tilde K}
\left|t\nabla_x^2\log p(t,x,y)+\nabla_x^2\left(\frac{d^2(x,y)}{2}\right)
\right|_{T_x M \otimes T_x M}=0.
\label{e1-3b}
\end{eqnarray}
\end{itemize}

\end{theorem}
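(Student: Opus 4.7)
I would reduce the log--kernel bounds to bounds on $p$ and its first two covariant derivatives in $x$ via
\[
\nabla_x \log p = \frac{\nabla_x p}{p}, \qquad \nabla_x^2 \log p = \frac{\nabla_x^2 p}{p} - (\nabla_x \log p) \otimes (\nabla_x \log p).
\]
A Gaussian-type lower bound on $p(t,x,y)$, which is available on any complete manifold, handles the denominators. For the numerators I would use the semigroup decomposition $p(t,x,y) = P_{t/2}[p(t/2,\cdot,y)](x)$ and differentiate under $P_{t/2}$ using Bismut-type path-integral representations for $\nabla P_t f$ and $\nabla^2 P_t f$.

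The new object to build is a Bismut formula for the Hessian of $P_t f$ valid with no curvature assumption whatsoever. The plan is to introduce an intrinsic stochastic variation
\[
X_t(x,\varepsilon) = X_t(x) + \varepsilon V_t^{(1)} + \varepsilon^2 V_t^{(2)} + o(\varepsilon^2)
\]
of the Brownian motion, with a prescribed covariant first derivative $V_t^{(1)}$ and a prescribed covariant second derivative $V_t^{(2)}$. The $\varepsilon^2$ term is the novelty advertised in the abstract; it supplies the extra freedom required to cancel the curvature contributions that otherwise enter the second-order expansion. Differentiating $P_t f(x) = \EE f(X_t(x))$ twice in $\varepsilon$ and applying a Girsanov change of measure will yield a formula of the form $\nabla_x^2 P_t f(x)(v,v) = \EE[f(X_t(x))\, Q_t(v)]$, with $f$ outside every derivative and $Q_t(v)$ a stochastic weight whose $L^p$ norms scale like $1/t$.

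Because no curvature bound is in force, the naive moments of $V_t^{(k)}$ and of $Q_t(v)$ need not be finite. My next step is therefore the localisation via cut-off processes described in the abstract. Fix an exhaustion $\{D_n\}$ of $M$ by relatively compact open sets with smooth boundaries and construct, path by path, a process $\tilde X^n_t$ which agrees with $X_t$ up to a smoothed version of the exit time from $D_n$, is differentiable in the starting point on each realisation, and whose first and second covariant derivatives admit moment bounds that are locally uniform in the initial point under Wiener measure. One then runs the variational calculus above on $\tilde X^n_t$, obtains Bismut formulas for the truncated semigroup, and passes $n\to\infty$ using small-time exit estimates to recover formulas valid on the full manifold. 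I expect this cut-off construction, and in particular establishing the uniform moment bounds and a clean passage to the limit that is insensitive to curvature, to be the principal technical obstacle.

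With the derivative formulas in hand, for (1a) I would take $f = p(t/2,\cdot,y)$, absorb the Gaussian upper bound on $p(t/2,z,y)$ into the stochastic weight, and divide by the Gaussian lower bound on $p$; the weight scaling produces exactly the announced $t^{-1/2}+d(x,y)/t$ and $t^{-2}d^2(x,y)+t^{-1}$. For (1b) the leading $-\textbf{I}/t$ contribution is identified from the Gaussian prefactor of $p$ and the remainder is estimated as in (1a). For Part~(2), off the cut locus Varadhan's identity $-2t\log p(t,x,y)\to d^2(x,y)$ holds and $d^2(\cdot,y)$ is smooth; writing $t\nabla_x^k\log p = \nabla_x^k(t\log p)$ and using the uniform bounds from (1a) together with the conditional-expectation form of the Bismut formulas along the associated Brownian bridge, I would pass the $k=1,2$ derivatives through the small-time limit to recover the semiclassical asymptotics uniformly on $\tilde K$.
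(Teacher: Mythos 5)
Your plan shares two ideas with the paper: the second-order intrinsic stochastic variation with an explicit $\varepsilon^2$ term leading to a curvature-free Bismut-type formula for $\nabla^2 P_t f$, and a quantitative cut-off adapted to a compact exhaustion $\{D_m\}$ used to make the formula global. (One structural discrepancy: the paper does not modify the process $X_t$ into a truncated $\tilde X^n_t$; it keeps the Brownian motion fixed and instead makes the Cameron--Martin direction $h$ vanish after exit from $D_m$, via a scalar weight $l_m(t,\gamma)$ built from the occupation of a smooth sub-level set. This distinction matters for adaptedness and for the moment bounds.)

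The genuine gap is the final estimation step. You propose to ``absorb the Gaussian upper bound on $p(t/2,z,y)$ into the stochastic weight, and divide by the Gaussian lower bound on $p$.'' On an arbitrary complete manifold without any curvature hypothesis, neither a two-sided Gaussian heat kernel bound nor even a one-sided Gaussian bound with a fixed constant is available; such bounds are precisely what the Ricci/bounded-geometry hypotheses in the prior literature (Li--Yau, Sheu, Hsu, Engoulatov, Souplet--Zhang) are needed for, and removing them is the stated point of the theorem. What \emph{is} available on a general complete manifold is only the Varadhan small-time asymptotics $t\log p(t,x,y)\to -d^2(x,y)/2$ (uniform on compacts) and the elementary local upper bound $p(t,z,y)\le 1$ for $z$ away from $y$. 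These are too weak to close your program: dividing by the Varadhan-type lower bound $\exp(-(d^2+\varepsilon)/2t)$ and multiplying by a matching upper bound leaves uncontrolled factors of $\exp(\varepsilon/t)$ that swamp the target $1/\sqrt{t}$ and $d/t$ scalings. The same issue afflicts your Part (2): uniform convergence of $t\log p$ to $-d^2/2$ does not by itself allow you to differentiate the limit, and the Brownian-bridge conditional-expectation route you sketch still needs uniform moment control that you cannot get without curvature.

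The step you are missing is the paper's comparison mechanism. The paper isometrically embeds each $D_m$ into a \emph{compact} Riemannian manifold $\tilde M_m$ (Lemma \ref{lem7.1}) and then proves, using the cut-off process together with the Varadhan-type asymptotics and exit-time estimates, a quantitative comparison: for any $L$ and compact $K$, for sufficiently large $m$ and small $t$,
$\sup_{x,y\in K}|\nabla_x^k\log p(t,x,y)-\nabla_x^k\log p_{\tilde M_m}(t,x,y)|\le C(m)e^{-L/t}$, $k=1,2$
(Propositions \ref{prp6.6} and \ref{prp6.9}). This reduces everything to the compact manifold $\tilde M_m$, where all the Gaussian-type estimates and the known small-time asymptotics (Sheu, Hsu, Malliavin--Stroock) are valid, and the exponentially small comparison error is negligible against the polynomial rates in the statement. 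Without this reduction-to-compact step, your final estimates have no proof: the Bismut formulas and the cut-off construction alone do not produce \eqref{e1-1}--\eqref{e1-3b} on a general complete manifold.
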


{\bf Remarks on the main theorem.}
As explained in  Section 1, these estimates are crucial for the stochastic analysis of the loop space $L_o(M)$.
Despite of the collective efforts, so far,  these type of results have been largely proved only for based manifolds with bounded geometry.
While in this paper, we only need to assume that the based manifold $M$  is complete and stochastically complete.
For analysis on the path space $P_o(M)$ over a general complete Riemanian manifold without curvature conditions, some work have already been done by Chen and Wu \cite{CW} and Hsu and Ouyang \cite{Hsu-Ouyang}. For $P_o(M)$, the content of Theorem \ref{main} is not essential.
In a forthcoming paper \cite{CLW}, we shall apply these to obtain integration by parts formula and  construct of O-U Dirichlet form
on $L_o(M)$, and to prove several  functional inequalities on $L_o(M)$.

Our main idea is to obtain  localised asymptotic comparison theorems for the first and the second order gradients of logarithmic heat kernel (see Proposition \ref{prp6.6} and \ref{prp6.9} below).
One novelty  is a new second order derivative formula via  a new type of (second order) stochastic variation for Brownian paths on the orthonormal frame bundles, which is in particular different from that used by Bismut \cite{Bis} or Stroock  \cite{S}.
The idea of stochastic variation was initiated in \cite{Bis} for obtaining an integration by part formula. While the choice of the variation in \cite{S} will produce a term with  (the time reverse of) a non-random vector field on $L^{2,1}(\Omega;\R^n)$, see also Malliavin and Stroock \cite[(1.5)]{MS}, it seems not
possible to replace the non-random vector field in their paper by a random one (otherwise the time reversed field is not adapted, hence It\^o's integral is not well defined), which  prevents the extension of
 the formula in \cite{MS} to  a general non-compact $M$ by a suitable localisation argument.
We shall choose a  variation (see Section \ref{section2} below) with desired properties, which in particular ensures that  the formula for the second order gradient of heat semigroup can take  a random vector fields. This is the key  step for us to extend the new formula to  a general complete $M$ (see e.g. Theorem \ref{thm3.1} below).
The expression we obtain for the second order gradient of heat semigroup is different  from that by Elworthy and Li   \cite{EL}, Li \cite{Li-doubly-damped, Li18}, or from that in
Arnaudon, Plank and Thalmaier \cite{APT} or that in Thompson \cite{Thompson}. We prove the formula by combining the second order stochastic
variation (shown to hold for a compact manifold) and approximation arguments (for a non-compact manifold), which is totally different from that in \cite{APT,Thompson}. This new method is adapted for both the proof of Proposition \ref{prp6.9} here and the integration by parts formula in
our forthcoming paper~\cite{CLW}.
\medskip

\section{Expression for the second order gradient of heat semigroup}

Throughout the paper, $(\Omega, \mathscr{F}, \mathscr{F}_t, \mathbb{P})$ denotes a filtered probability space satisfying the standard assumptions, and $B_t=(B_t^1,B_t^2,\cdots, B_t^n)$ is a standard $\R^n$-valued Brownian motion.
Let $L(\Omega;\R^n)$ denote the collection of all stochastic processes $h: \R_+\times\Omega \rightarrow \R^n$ which are $\mathscr{F}_t$-adapted.
Let
$h'(\cdot, \omega)$ denote the time derivative of $h(\cdot, \omega)$. We define the Cameron-Martin space on the Wiener space as follows
 \begin{equation*}\aligned
L^{2,1}(\Omega;\R^n):&=\bigg\{
h\in L(\Omega;\R^n):
 \; h(\cdot,\omega)~\text{is absolutely continuous for a.s.}~\omega\in \Omega,\\& ~~~~~~~~~~~~~~~~~~~~~~~~~~~~~~~~~~~~~~\text{and}~\mathbb{E}\Big[\int_0^1 |h'(s, \omega)|^2\, \d s\Big] <\infty\bigg\}.
\endaligned\end{equation*}
Elements of $L^{2,1}(\Omega;\R^n)$ are usually called (random) Cameron-Martin vectors.
Let
$C_b(M)$ and $C_c(M)$ denote the collection of all real valued bounded and continuous functions on $M$ and continuous functions
with compact supports in $M$ respectively. Let $\so(n)$ denote the set of of anti-symmetric $n\times n$ matrices and let $SO(n)$  denote the collection
of  orthonormal $n\times n$ matrices.

{\bf The curvature.} Let ${\bf R}_x$ denote the sectional curvature tensor and let $\Ric_x$ denote the Ricci curvature tensor at $x\in M$ respectively. Thus both ${\bf R}_x:T_x M\times T_x M \rightarrow T_x M \times T_x M$ and $\Ric^{\sharp}_x:T_x M \rightarrow T_xM$ are linear map,  the latter is given by the duality:
$$\big\langle \Ric^{\sharp}_x(v_1), v_2\big\rangle_{T_x M} =\Ric_x(v_1,v_2),\quad\ \forall\ v_1,v_2\in T_x M.$$

\medskip

{\bf The horizontal Brownian motion.}
Given a point $x\in M$, let $O_xM$ denote the space of  linear isometries from $\R^n$ to $T_xM$. Let  $OM:=\cup_{x\in M} O_xM$, which is the orthonormal frame bundle over $M$,  and  let $\pi: OM \rightarrow M$ denote the canonical projection which takes a frame $u\in O_x M$ to its base point $x$.
For every $u\in OM$, we define
${\rm R}_u:\R^n\times \R^n \rightarrow \R^n\times \R^n$ and $ {\rm ric}_u:\R^n \rightarrow \R^n$ by
$$\begin{aligned}{\rm R}_u(e_1,e_2):&=u^{-1}\big({\bf R}_{\pi(u)}(ue_1,ue_2)\big), \\
\ric_u(e_1):&=u^{-1}\big({\bf \Ric^{\sharp}}_{\pi(u)}(ue_1)\big)
\end{aligned}$$
for every $ e_1,e_2\in \R^n$.

Given a vector $e\in \R^n$, we denote by $H_{e}$ the associated canonical horizontal vector field on $OM$ with the property that $(T\pi)_{u}(H_{e})=u e\in T_{\pi(u)}M$.
Thus the solution of the ODE $$u'(t)=H_e(u(t))$$ projects to the geodesic on $M$ with the initial position $x$ and the initial speed $u(0)(e)$.

We choose an orthonormal basis $\{e_i\}_{i=1}^n$ of $\R^n$.  
Suppose
$\{U_t\}_{t\ge 0}$ is the solution of following $OM$-valued Stratonovich stochastic differential equation
\begin{equation}\label{sde1}
\d U_t=\displaystyle\sum^n_{i=1}H_{e_i}\left(U_t \right)\circ\d B_t^i,
\end{equation}
where the initial value $U_0$ is a fixed orthonormal basis of $T_xM$.
We usually call $\{U_t\}_{0\le t<\zeta}$ the canonical horizontal Brownian motion, where
$\zeta:\Omega \to \R_+$ is the life time for $U_t$. Let 
$X_t^x:=\pi(U_t)$, $0\le t<\zeta(x)$, then $X_t^x$  is
a Brownian motion
on $M$ with initial value $x$ and life time $\zeta(x)$.
This is the celebrated intrinsic construction of $M$-valued Brownian motion by  Eells and Elworthy \cite{EE} and
Elworthy \cite{Elworthy}, see also Malliavin  \cite{Malliavin}.
It is well known that the Brownian motion on $M$  does not explode if and only if the horizontal Brownian motion $U_t$ on $OM$ does not explode.
 In particular, it does not rely on the choice of an isometrically embedding from $M$ to an ambient Euclidean space.
Let $$P_tf(x):=\Ee\left[f\left(X_t^x\right)\1_{\{t<\zeta(x)\}}\right]$$
 be the heat semigroup associated to Brownian motion $X_{\cdot}$.

\emph{The superscript $x$ may be omitted if there is no risk of confusion.}

\subsection{Second order gradient of the heat semigroup}
Let $\{U_t\}_{0\le t <\zeta(x)}$
denote  the horizontal Brownian motion on $M$ and
$\{X_t^x=\pi(U_t)\}_{0\le t<\zeta(x)}$ is the Brownian motion on $M$ with  initial value $x$ and  life time $\zeta(x)$.
For any $h\in L^{2,1}(\Omega;\R^n)$, we set
\begin{equation}\label{e3-2}
 \Gamma^h_t:=\int_0^t {\rm R}_{U_s} \left( \circ \d B_s, h(s)\right),\qquad
 \Theta^h_t:=h'(t)+\frac{1}{2}\ric_{U_t}(h(t)),
\end{equation}
It is easy to see that $\Gamma^h_t$ is an $\so(n)$-valued process.  For $ t\geq0$,  we define
\begin{equation}\label{e3-2a}
\begin{split}
\Lambda_t^h:=\Gamma_t^h h'(t)+\frac{1}{2}U_t^{-1} \; \nabla \Ric^{\sharp}_{X_t}\big(U_th(t), U_t h(t)\big)
-\f 12\Gamma_t^h \;\ric_{U_t}(h(t))+\f 12 \ric_{U_t}\left(\Gamma_t^h h(t)\right).
\end{split}
\end{equation}

We are now ready to state one of our main tools,  the second order gradient formula on a general complete $M$.
\begin{thm}\label{thm3.1}
 Suppose that $M$ is a complete Riemannian manifold.  Let $\{D_m\}_{m=1}^\infty$ denote the increasing family of exhaustive relatively compact open sets of $M$ and
let $\{l_m\}_{m=1}^\infty$  denote the cut-off vector fields
as constructed in Lemma \ref{lem5.1}.
Let $x\in m$, and there exists $m_0\in \N$  such that $x\in D_{m_0+1}$.

For every $m> m_0$, $v \in T_x M$, and $t\in (0,1]$, we define
$$h(s):=\Big(\frac{t-2s}{t}\Big)^+\cdot l_m\left(s,X_{\cdot}^x\right)\cdot U_0^{-1}v,
\quad s\geq0.$$
Then  $h\in L^{2,1}(\Omega;\R^n)$. Furthermore, for any $f\in C_b(M)$ we have
\begin{equation}\label{t3-1-1}
\begin{split}
&\big\langle \nabla^2 P_t f(x), v\otimes v\big\rangle_{T_x M \otimes T_x M}\\
&=\EE_x\left[\left(
\left(\int_0^t\langle \Theta^h_s, \d B_s\rangle\right)^2-
\int_0^t \langle \Lambda^h_s, \d B_s\rangle-\int_0^t \left|\Theta^h_s\right|^2 \d s\right)f(X_t^x)\1_{\{t<\zeta(x)\}}\right].
\end{split}
\end{equation}
\end{thm}

In particular, the processes $l_m(t, \gamma)$  equals to $1$ at any time before $\gamma$ exits $D_{m-1}$ and equals to zero after it exits $D_m$ for the first time. So it is obvious to see that $h(t, \gamma)=U_0^{-1}v$ at $t=0$ and vanishes after the first exit time of $\gamma$ from $D_m$.

\subsection{Comments}

The main idea for proving the second order gradient of the heat semigroup $P_t$ is to
approximate the formula on $M$ by those for a family of specific compact manifolds.
We first use a result of Greene and Wu \cite{GW} to construct a family of relatively compact exhausting open subsets $\{D_m\}_{m=1}^\infty$,
which is valid for a complete Riemannian manifold $M$. This allows to construct a series of random cut-off vector fields $l_m\in L^{2,1}(\Omega;\R^n)$
vanishing,  as soon as  the sample path exits $D_m$ for the first time, with the necessary quantitative estimates needed for the localisation. See Lemma \ref{lem5.1} below for details.
The lemma is partly inspired by the work of Thalmaier \cite{T} and  Thalmaier and Wang \cite{TW}, where geodesic balls are used. For the purpose of embedding into compact manifolds,  we make sure that each $D_m$ having a smooth boundary which, because of the cut locus, cannot be taken as granted of geodesic balls on arbitrary Riemannian manifolds.

We want to remark that this  offers a more powerful (and also a more reliable) alternative to localisation with stopping times, the latter has been commonly used in stochastic calculus and occasionally incorrectly used. The stopping time argument relies on a continuity assumption on the Brownian motion with respect to the initial value.
   Such continuity  condition seems not easy to verify (for stopping times), and ought not be applied casually, see e.g.  Elworthy \cite{Elworthy78}, Li and Sheutzow \cite{Li-Scheutzow},  and Li \cite{Li-flow} for more details.
   Note, however,  that exit times from regular domains do have good regularity properties in the sense of Malliavin calculus, we refer the reader to
the work of Airault, Maillian, and Ren \cite{Airault-Malliavin-Ren} for more details.

Cut-off vector fields have been  previously applied by
Arnaudon, Plank and Thalmaier \cite{APT}, Thompson \cite{Thompson}, Thalmaier \cite{T}, and  Thalmaier and Wang \cite{TW}
to provide a {\it localised} differential formula
for heat semigroups. As explained earlier, we use a new type of (second order) stochastic variation argument to construct
the global second order gradient formula given below. In particular, the expression here is different from that
of Elworthy and Li \cite{EL}, Arnaudon, Plank, and Thalmaier \cite{APT}, Li \cite{Li-doubly-damped, Li18} and Thompson \cite{Thompson}
and particularly we do not use the doubly parallel translation operators used in \cite{Li-doubly-damped, Li18}.

\subsection{Comparison theorems}
The outline of the proof is as follows. We first show that the formula holds for a compact Riemannian manifold, this proof is given  in Section \ref{section2} using a new stochastic variation.
To pass from a compact manifold to a non-compact manifold, we use a suitable isometric embedding from $D_m$ into a compact Riemannian manifold $\tilde M_m$, as well as the quantitative cut-off process $l_m$ constructed by Lemma \ref{lem7.1} and Lemma \ref{lem5.1} respectively.

Denote by $p_{\tilde M_m}(t,x,y)$  the heat kernel on $\tilde M_m$.
 Although the heat kernel of a Riemannian manifold is determined in a global manner by the Riemannian metric,   
  we  obtain, below, short time comparison theorems between $\nabla \log p_{\tilde M_m}$, $\nabla^2 \log p_{\tilde M_m}$
and $\nabla \log p$, $\nabla^2 \log p$. These are used for proving \eqref{e1-1}--\eqref{e1-3b}.

The comparison theorem below allows us to obtain estimates for $\nabla \log p$ and $\nabla^2 \log p$, with the successive applications of first order and second order gradient formula as well as comparison estimates for functionals of the Brownian motions on $M$ and that on $\tilde M_m$.

\begin{prp}
(Propositions  \ref{prp6.6} and  \ref{prp6.9})\label{prp3.1}
 Suppose $K$ is a compact subset of $M$. For any constant $L>1$, there exists a $m_0=m_0(K,L)\in \N$, which may depend on $K$ and $L$,  such that for all $m\ge m_0$ we could find a positive time
$t_0=t_0(K,L,m)$ such that
\begin{eqnarray*}
&\sup_{x,y\in K} \left|\nabla_x\log p(t,x,y)- \nabla_x \log p_{\tilde M_{m}}(t,x,y)\right|_{T_x M}
\le C(m)e^{-\frac{L}{t}},\quad \forall\ t\in (0,t_0],\\
&\sup_{x,y\in K} e^{\frac{L}{t}}  \left|\nabla_x^2\log p(t,x,y)-\nabla_x^2 \log p_{\tilde M_{m}}(t,x,y)
\right|_{T_x M\otimes T_x M}\le C(m)e^{-\frac{L}{t}},\quad \forall\ t\in (0,t_0],
\end{eqnarray*}
where $C(m)$ is a positive constant  depending on $m$.
\end{prp}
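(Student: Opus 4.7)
The plan is to couple the Brownian motions on $M$ and on $\tilde M_m$ through the isometric embedding $D_m\hookrightarrow\tilde M_m$ (Lemma \ref{lem7.1}) and to exploit the fact that the cut-off weights $l_m$ from Lemma \ref{lem5.1} force the probabilistic derivative formulas on the two spaces to coincide path-wise up to the first exit time $\tau_m:=\inf\{s\ge 0: X^x_s\notin D_m\}$. Since the Levi-Civita connection, the curvature tensors, and the driving Brownian motion agree on $D_m$, the solutions of the horizontal SDE \eqref{sde1} on $M$ and on $\tilde M_m$ launched from the same frame are path-wise identical on $[0,\tau_m)$. Standard Gaussian exit-time estimates give
$$\sup_{x\in K}\Pp_x(\tau_m\le t)\le C(m)\exp\!\left(-\frac{d(K,\partial D_m)^2}{c\,t}\right),$$
so if $m_0$ is chosen so that $d(K,\partial D_{m_0})^2/c$ exceeds any prescribed multiple of $L$, this exit probability is absorbed by the $e^{-L/t}$ on the right-hand side with room to spare for the subsequent division by $p$.

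For the first order bound, I would apply the Bismut--Elworthy--Li formula with the cut-off $l_m$ to the function $p(t/2,\cdot,y)\in C_b(M)\cap C_b(\tilde M_m)$ and combine with Chapman--Kolmogorov to obtain
$$\nabla_x p(t,x,y)=\EE_x\left[N^{M}_{t/2}\cdot p(t/2, X_{t/2}^x, y)\1_{\{t/2<\zeta(x)\}}\right],\qquad N^M_{t/2}:=\int_0^{t/2}\langle \Theta^{h,M}_s,\d B_s\rangle,$$
together with the analogous expression for $p_{\tilde M_m}$. The integrand of $N^M_s$ vanishes past $\tau_m$ and coincides path-wise with that of $N^{\tilde M_m}_s$ on $[0,\tau_m)$, so $N^M_s=N^{\tilde M_m}_s$ for every $s$, and subtracting gives
$$\nabla_x p(t,x,y)-\nabla_x p_{\tilde M_m}(t,x,y)=\EE_x\left[N^M_{t/2}\bigl(p(t/2,X^x_{t/2},y)\1_{\{t/2<\zeta\}}-p_{\tilde M_m}(t/2,X^x_{t/2},y)\bigr)\right],$$
with the factor in parentheses supported on $\{\tau_m\le t/2\}$. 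Cauchy--Schwarz with Burkholder--Davis--Gundy and the boundedness of $l_m$, $l_m'$ and $\ric$ on $\overline{D_m}$ controls $\EE[|N^M_{t/2}|^2]$ by a polynomial in $1/t$ depending only on $m$; combined with Gaussian upper bounds on both heat kernels this yields $|\nabla_x p-\nabla_x p_{\tilde M_m}|\le C(m)e^{-L'/t}$. Dividing by the Varadhan-type lower bound $p(t,x,y)\ge c(m,K)\,t^{-n/2}e^{-C(m,K)/t}$, itself inherited from $\tilde M_m$ via the coupling, yields the first estimate.

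The second order comparison follows the same blueprint, but uses Theorem \ref{thm3.1} in place of Bismut--Elworthy--Li, so that
$$\nabla^2_x p(t,x,y)=\EE_x\left[W^M_{t/2}\cdot p(t/2, X^x_{t/2}, y)\1_{\{t/2<\zeta\}}\right],$$
where $W^M_{t/2}:=(N^M_{t/2})^2-\int_0^{t/2}\langle \Lambda^{h,M}_s,\d B_s\rangle-\int_0^{t/2}|\Theta^{h,M}_s|^2\,\d s$. All three terms vanish past $\tau_m$ and agree path-wise with their $\tilde M_m$-counterparts on $[0,\tau_m)$, so once again the difference is supported on $\{\tau_m\le t/2\}$. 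The identity $\nabla^2\log p=(\nabla^2 p)/p-\nabla\log p\otimes\nabla\log p$ reduces the target to the preceding quantities plus a cross term controlled by the first-order estimate just proved. The principal technical obstacle is the moment estimate for $W^M_{t/2}$: the prefactor $(t-2s)^+/t$ produces $h'(s)=O(1/t)$, so the quadratic term $(N^M_{t/2})^2$, the stochastic integral involving $\nabla\Ric^{\sharp}$, and the $|\Theta^{h,M}|^2$ integral each contribute factors polynomial in $1/t$, depending on the $C^1$ curvature bounds on $\overline{D_m}$. Such polynomial growth is dominated by $e^{(L'-L)/t}$ for $L'>L$, so by enlarging $m_0$ and shrinking $t_0$ accordingly one recovers the claimed $C(m)e^{-L/t}$ bound.
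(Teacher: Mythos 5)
Your overall strategy (couple the two Brownian motions up to $\tau_m$, use the cut-off so the stochastic-integral weights agree path-wise, and let a Gaussian exit-time bound absorb the discrepancy) does mirror the paper's scheme, but there is a genuine gap in the central step.

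You assert that in
\begin{equation*}
\nabla_x p(t,x,y)-\nabla_x p_{\tilde M_m}(t,x,y)=\EE_x\Bigl[N^M_{t/2}\bigl(p(t/2,X^x_{t/2},y)\1_{\{t/2<\zeta\}}-p_{\tilde M_m}(t/2,X^{m,x}_{t/2},y)\bigr)\Bigr]
\end{equation*}
the factor in parentheses is ``supported on $\{\tau_m\le t/2\}$.'' This is false. The two heat kernels $p$ and $p_{\tilde M_m}$ are \emph{globally} determined functions and differ at every point of $D_m$ (even though the metric, curvature, and the paths $X^x$, $X^{m,x}$ all agree there up to $\tau_m$); the coupling forces $X^x_{t/2}=X^{m,x}_{t/2}$ on $\{\tau_m>t/2\}$, but $p(t/2,z,y)\ne p_{\tilde M_m}(t/2,z,y)$ even for $z\in D_m$. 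So your difference is not confined to the exit event: the main contribution lives on $\{\tau_m>t/2\}$ and you must show it is small there for a different reason, namely because $|p-p_{\tilde M_m}|\le e^{-L'/t}$ uniformly on a large compact set. That estimate is itself nontrivial; it is exactly what the paper establishes in Lemma \ref{lem6.3} by the strong-Markov decomposition $p=p^{D_m}+\Ee_x[p(t-\tau_m,X_{\tau_m},y)\1_{\{\tau_m<t<\zeta\}}]$ applied to both manifolds. Without this lemma (or an equivalent), your proof does not close.

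Two further points need repair. First, the intermediate point $X^x_{t/2}$ is only guaranteed to lie in $D_{m_0}$ (not in $K$) on the good event, so the comparison $|p-p_{\tilde M_m}|$ must be proved on $\overline{D_{m_0}}$, a strictly larger compact set, with an exponent that is already inflated to cover the subsequent division by $p(t,x,y)$. Second, that division is not a one-line appeal to a Varadhan lower bound: $p(t,x,y)\gtrsim e^{-(r_0^2+1)/t}$ eats away part of your exponent, so the exit-probability bound $e^{-d(K,\partial D_{m_0})^2/(2t)}$ and the heat-kernel comparison exponent must be chosen in terms of $L$ \emph{and} the diameter $r_0=\mathrm{diam}(K)$. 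The paper tracks this bookkeeping explicitly (it uses the exponent $4(L+r_0+1)^2$ in \eqref{eq3.12} and works directly with the ratios $p(t-s,X_s,y)/p(t,x,y)$ throughout Lemma \ref{lem6.4}), precisely so that, after the split into $\{t\le\tau_{m_0}\}$ and $\{t>\tau_{m_0}\}$ and after all divisions, the surviving exponent is still at least $L$. Your sketch identifies the right ingredients but omits the heat-kernel comparison lemma that carries the argument and leaves the exponent accounting to ``enlarging $m_0$,'' which is not automatic because $r_0$ enters the bound independently of $m_0$.
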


\section{Second Order Variation on a Compact Manifold}\label{section2}

\quad \quad \emph{Throughout this section,  $M$ is an $n$-dimensional compact Riemannian manifold.}
In Proposition \ref{prp4.5} below, we shall establish (\ref{t3-1-1}) for a compact manifold, which is a  fundamental step  toward Theorem \ref{thm3.1}.

 The first second order differential formula for the heat semigroup $P_t$ was obtained by Elworthy and Li \cite{EL1}  for a non-compact manifold, however with restrictions on their curvature. Another disadvantage of the formula was its involvement of a non-intrinsic curvature which was due to the application of the derivative flow of gradient  stochastic differential equations, as well as a martingale approach developed in  Li \cite{Li-thesis}.   An intrinsic formula for $\nabla^2 P_t f$ was given by Stroock  \cite{S} for a compact Riemannian manifold,  while a localised intrinsic formula  was obtained by Arnaudon, Plank and Thalmaier \cite{APT} with the martingale approach.
The study of the second order gradient of  the Feynman-Kac semigroup of an operator $\Delta+V,$ with a potential function, was pioneered by Li \cite{Li18, Li-doubly-damped}, where  a path integration formula was obtained with the help of doubly damped stochastic parallel transport equation. (The  first order gradient formula was previously obtained in  Li and Thompson \cite{Li-Thompson}, c.f. \cite{EL1, EL-Vilnius}.)
A  localised version of the Hessian formula (still with doubly stochastic damped parallel translations) for the  Feynman-Kac semigroup  was derived by Thompson \cite{Thompson}.

However, all the expressions  mentioned earlier do not seem to lead to our application, such as the proof of Proposition \ref{prp3.1}. To overcome this problem  we introduce a quantitative localisation procedure and obtain a second order gradient formula to which this localisation method can be applied.

One of our main tools is to extend Bismut's idea to perturb the $M$-valued Brownian motion with initial value $\xi(\varepsilon)$ (where
$\xi(\varepsilon)$ is a smooth curve in $M$), they will be constructed as solutions of a family of SDEs with the driving Brownian motion
$\{B_t\}_{t\ge 0}$ rotated and translated appropriately.
The rotation and translation exerted on 
$\{B_t\}_{t\ge 0}$
transmits the variation in the initial value of the Brownian motion on the manifold  to variations, in the same parameter, of the Radon Nikodym derivatives of a family of probability measures, with respect to which the solutions are Brownian motions on $M$. This simple and elegant idea was applied in Bismut \cite{Bis} for deducing an integration by parts formula. Incidentally, such integration by parts formula and the first order gradient formula of the heat semigroup were proved to be equivalent on a compact manifold by Elworthy and Li \cite{EL}.
 In  Stroock \cite{S}, by calculating the concrete form of the second variation introduced by Bismut,  this idea was adapted for obtaining the
second order derivative formula for the heat semigroup on a compact  manifold.
As explained earlier,
the choice of stochastic variation in  \cite{S} (see also Malliavin and Stroock \cite[(1.5)]{MS}) will produce a term coming from  the time
reverse of a non-random vector field on $L^{2,1}(\Omega;\R^n)$,  and it seems not possible to replace the non-random vector field  by a random one (otherwise the time reversed field is not adapted,  hence It\^o's integral is not well defined). Therefore the formula obtained
in Stroock \cite{S} may not be extended to the one with a random vector field and so is not suitable to for extension to non-compact manifolds with the localisation technique we  introduce shortly.

One crucial ingredient for our choice of the stochastic variation is that
it ensures \eqref{l3-3-1}, which implies that the second variation vanishes at time $t$ when we choose
a vector field $h$ in the translated part satisfying $h(t)\equiv 0$. This allows us to derive a second order gradient formula
with localised vector fields and to extend it to a general (non-compact) complete Riemannian manifold.

\subsection{A novel stochastic variation with a second order term}

As before,  $\{U_t\}_{0\le t<\zeta(x)}$ is the solution of equation \eqref{sde1} with initial point $U_0$ and $\pi(U_0)=x$.
In Bismut \cite{Bis} the following classical perturbation for the driving force $B_t$ was used:
 $$\hat B_t^\varepsilon=\int_0^t e^{-\varepsilon \Gamma^h_s}\d B_s+\varepsilon  \int_0^t \left(h'(s) +\f 12 \ric_{U_s} h(s)\right) \d s.$$
where $h\in L^{2,1}(\Omega;\R^n)$ is a chosen Cameron-Martin vector and  $\Gamma^h_t:=\int_0^t {\rm R}_{U_s} \left( \circ \d B_s, h(s)\right)$.
This perturbation of the noise
works well with the first variation for which one needs to ensure that $\f {\partial} {\partial\varepsilon}|_{\varepsilon =0} \pi (U_t^\varepsilon) =
U_th(t)$ and has been the popular and standard perturbation, as used also in Driver \cite{D1}, Fang and Malliavin \cite{FM}. Other variation of the noise
are also of first order perturbations.

However, with the above mentioned variation,  $\f {\partial^2} {\partial\varepsilon^2}|_{\varepsilon =0} \pi (U_t^\varepsilon) \not =0$ as long as $h(t)\not \equiv 0$.
To solve this problem, we will introduce a second order variation (such perturbation  is not unique and  we may find a slightly different choice).  Unlike the case with the classical perturbation, this time we cannot avoid differentiating the structure equation so  have to choose a connection on the frame bundle. Our approach is inspired by the theory of linear connections induced by a SDE developed by Elworthy, LeJan and Li \cite{ELL}.  We believe that the same method can also be used for higher order
variations.

For any $h\in L^{2,1}(\Omega;\R^n)$, we have defined an $\so(n)$-valued process $\Gamma_t^h$ and $\R^n$-valued process
$\Theta_t^h$, $\Lambda_t^h$ by \eqref{e3-2} and \eqref{e3-2a} respectively.
We first introduce the translation and define the $\R^n$-valued
process $B_t^{\e,h}$ as follows
\begin{equation}
B_t^{\varepsilon,h}:=B_t+\e\int_0^t h'(s)\d s+\frac{\e^2}{2}\int_0^t \Phi^h_s\d s,
\end{equation}
where  $\Phi^h_t:=\Gamma^h_t h'(t)$.
We then introduce a rotation for $\R^n$-valued Brownian motion. Let us first
 set
\begin{equation}\label{e3-3}
\begin{aligned}
\Gamma_t^{(2),h}:=&\int_0^t U_s^{-1}\nabla {\rm {\bf R}}_{\pi(U_s)}\big(U_s h(s), U_s\circ \d B_s, U_s h(s)\big)-\int_0^t
\Gamma^h_s{\rm R}_{U_s}(\circ \d B_s, h(s))\\
&+\int_0^t {\rm R}_{U_s}(h'(s),h(s))\d s+\int_0^t {\rm R}_{U_s}\left(\circ \d B_s, \Gamma^h_s h(s)\right).
\end{aligned}
\end{equation}
It is easy to see that $\Gamma_t^{(2),h}$ is an $\so(n)$-valued process.
Then for every $\varepsilon>0$, we define $SO(n)$-valued process $G_t^{\e,h}$ as follows
\begin{equation*}
\begin{aligned}
G_t^{\varepsilon,h}&:=\exp\left({-\varepsilon \Gamma^h_t
-\frac{\varepsilon^2}{2} \Gamma_t^{(2),h} }\right),
\end{aligned}
\end{equation*}
where $\exp:\so(n)\to SO(n)$ is
the exponential map in the Lie algebra $\so(n)$ of $SO(n)$.

 We can now introduce $\tilde B_t^{\varepsilon,h}$, the variation of $B_t$, as well as the corresponding equation on~$OM$.
\begin{defn}\label{def4.1}
Let $\xi(\e)$, $\e\in(-1,1)$, be a geodesic with $\xi(0)=x$.
Let $\{U_0^{\varepsilon,h}: \e\in(-1,1)\}$ be a parallel orthonormal
 frame along $\xi(\e)$ with $\pi(U_0^{\varepsilon,h})=\xi(\e)$. Let $U_t^\epsilon$ denote the
 solution of the following equation with initial condition $U_0^{\varepsilon,h}$,
\begin{equation}\label{sde2}
\begin{aligned}
&\d U_t^{\varepsilon,h}=\sum_{i=1}^n H_{e_i}(U_t^{\varepsilon,h})\circ \d \tilde B_t^{\e,h,i},\\&
\d\tilde B_t^{\varepsilon,h} =G_t^{\varepsilon,h}\circ \d B_t^{\e,h},\quad \tilde B_0^{\varepsilon,h}=0.
\end{aligned}
\end{equation}
We define $X_t^{\varepsilon,\xi(\e),h}= \pi(U_t^{\varepsilon,h})$. If $\e=0$, then $X_t^{0,x,h}=X_t^x$ with
 $X_t^x=\pi(U_t)$.
 \end{defn}
We remark that the perturbation in $U_t^{\varepsilon,h}$  has a translation part $B_t^{\varepsilon,h}$, and a rotation part $G_t^{\varepsilon,h}$. The rotation
$G_t^{\varepsilon,h}$ is chosen to offset precisely the twisting effects induced by the second
order stochastic variation.

 \emph{For simplicity we omit the subscript
$h$, in $\Theta^h_t,\Lambda^h_t$,
$X_t^{\varepsilon,h},\Gamma_t^{h},\Gamma_t^{(2),h},G_t^{\varepsilon,h}$, $B_t^{\varepsilon,h}$  and $U_t^{\varepsilon,h}$, from time to time.}

Let $\varpi$ and $\theta$ denote respectively the $\so(n)$-valued connection $1$-form and the $\R^n$-valued solder $1$-form
respectively.
 Set
 $$\varpi_t^\varepsilon:=
 \varpi\left( \f \partial {\partial \varepsilon}U_t^\varepsilon\right),  \qquad
 \theta_t^\varepsilon:= 
 \theta\left( \f \partial {\partial \varepsilon} U_t^\varepsilon\right).$$

Through this paper, we use
$D_t$, $\d_t$ to denote the stochastic covariant differential  for vector fields
and stochastic differential on $M$ along a semi-martingale respectively and
$\frac{D}{\partial \e}$ denotes the covariant derivative for vector fields on $M$ with respect to the variable $\e$.

\begin{lem}\label{lem 4.1}
If we choose $h\in L^{2,1}(\Omega;\R^n)$ such that $h(0)=U_0^{-1}\left(\frac{\partial}{\partial \e}\Big|_{\e=0}\xi(\e)\right)$, then
 \begin{equation}\label{varpi}
\varpi_t^\varepsilon=\int_0^t  {\rm R}_{U_s^\e} (G_s^\varepsilon\circ \d B_s^\varepsilon, \theta_s^\varepsilon).
\end{equation}
And $\theta_t^\e$ satisfy the following equation,
\begin{equation}\label{l3-1-1}
\begin{cases}
& \d\theta_t^\e=-\big(\Gamma_t+\e\Gamma^{(2)}_t\big)G_t^\e\circ \d B_t^\e+\w_t^\e G_t^\e\circ \d B_t^\e+
G_t^\e\big(h'(t)+\e\Phi_t\big)\d t,\\
& \theta_0^\e=(U_0^\e)^{-1}\frac{d \xi(\e)}{d \e}.
\end{cases}
\end{equation}
In particular, we have
\begin{equation}
\left\{\begin{aligned}
&\theta_t^0:=\theta\left( \f \partial {\partial \varepsilon} \Big|_{\varepsilon=0}U_t^\varepsilon\right)=h(t), \label{l3-1-2}
\\
&\w^0_t=\Gamma_t,\\
&\frac{D}{\partial \e}\Big|_{\e=0}\left(U_t^\e G_t^\e e\right)=0,\quad \ \forall\ e\in \R^d,\\
&\frac{\partial X_t^\e}{\partial \e}\Big|_{\e=0}=U_t h(t).
\end{aligned}\right.
\end{equation}
 \end{lem}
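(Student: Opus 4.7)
The strategy is to apply Cartan's structure equations on the orthonormal frame bundle $OM$ to the two-parameter surface $(t,\e)\mapsto U_t^\e$. Recall that the canonical $\R^n$-valued solder form $\theta$ and the $\so(n)$-valued Levi--Civita connection form $\varpi$ satisfy
\begin{align*}
d\theta=-\varpi\wedge\theta,\qquad d\varpi=-\varpi\wedge\varpi+\Omega,
\end{align*}
where the curvature 2-form is $\Omega_u(v_1,v_2)=R_u(\theta(v_1),\theta(v_2))$. For each fixed $\e$, the horizontal SDE \eqref{sde2} makes $t\mapsto U_t^\e$ a horizontal semimartingale with anti-development $G_t^\e\circ\d B_t^\e$, so $\theta(\partial_t U_t^\e)=G_t^\e\circ\d B_t^\e$ and $\varpi(\partial_t U_t^\e)=0$; and because $\{U_0^\e\}$ is parallel along the geodesic $\xi(\e)$, the initial conditions are $\theta_0^\e=(U_0^\e)^{-1}\tfrac{d\xi}{d\e}$ and $\varpi_0^\e=0$. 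All four claims of the lemma can then be read off from the structure equations by pairing with $(\partial_t,\partial_\e)$.

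First, pairing the second structure equation with $(\partial_t,\partial_\e)$ and using $\varpi(\partial_t U_t^\e)=0$ to kill the $\varpi\wedge\varpi$ term yields
$$\partial_t\varpi_t^\e=\Omega_{U_t^\e}(\partial_t U_t^\e,\partial_\e U_t^\e)=R_{U_t^\e}(G_t^\e\circ\d B_t^\e,\theta_t^\e),$$
which integrated against $\varpi_0^\e=0$ is exactly \eqref{varpi}. Pairing the first structure equation with the same bi-vector gives
$$\partial_t\theta_t^\e=\partial_\e(G_t^\e\circ\d B_t^\e)+\varpi_t^\e(G_t^\e\circ\d B_t^\e).$$
To obtain \eqref{l3-1-1} I would then expand the right-hand side: the translation part $\e h'(t)+\tfrac{\e^2}{2}\Phi_t$ inside $\d B_t^\e$ contributes $G_t^\e(h'(t)+\e\Phi_t)\d t$, while the rotation part $G_t^\e\circ\d B_t$ is differentiated using $\partial_\e G_t^\e=-(\Gamma_t+\e\Gamma_t^{(2)})G_t^\e$, an identity which -- thanks to the specific curvature-gradient and commutator terms in the definition \eqref{e3-3} of $\Gamma_t^{(2)}$ -- absorbs the non-commutativity of $\Gamma_t$ with itself along the Stratonovich differential.

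The $\e=0$ specialisations then follow directly. With $G_t^0=I$, $B_t^0=B_t$, and $\varpi_t^0=\Gamma_t$ (read off from \eqref{varpi}), the equation for $\theta_t^0$ collapses to $\d\theta_t^0=-\Gamma_t\circ\d B_t+\Gamma_t\circ\d B_t+h'(t)\d t=h'(t)\d t$, giving $\theta_t^0=h(0)+\int_0^t h'(s)\d s=h(t)$. For the covariant identity I would write $U_t^\e=\tilde U(\e)A(\e)$, where $\tilde U(\e)$ is the horizontal lift of $\e\mapsto X_t^\e$ based at $\tilde U(0)=U_t$ and $A(\e)\in SO(n)$ satisfies $A(0)=I$, $A'(0)=\varpi_t^0=\Gamma_t$; since horizontal sections are covariantly constant,
$$\frac{D}{\partial\e}\bigg|_{\e=0}(U_t^\e G_t^\e e)=\tilde U(0)\bigl(A'(0)e+\partial_\e G_t^\e|_{\e=0}e\bigr)=U_t(\Gamma_t e-\Gamma_t e)=0.$$
Applying $T\pi$ to $\partial_\e U_t^\e|_{\e=0}$ and using $(T\pi)_u v=u\theta(v)$ finally yields $\tfrac{\partial X_t^\e}{\partial\e}|_{\e=0}=U_t\theta_t^0=U_t h(t)$.

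The main technical hurdle is justifying the identity $\partial_\e G_t^\e=-(\Gamma_t+\e\Gamma_t^{(2)})G_t^\e$ in the Stratonovich sense, which is precisely where the intricate definition \eqref{e3-3} of $\Gamma_t^{(2)}$ earns its keep: its curvature-gradient piece $\int U^{-1}\nabla\mathbf{R}(\cdots)$ and its two commutator-like pieces $-\int\Gamma_s R_{U_s}(\circ\d B_s,h(s))$ and $+\int R_{U_s}(\circ\d B_s,\Gamma_s h(s))$ are the precise counter-terms needed so that differentiating $\exp(-\e\Gamma_t-\tfrac{\e^2}{2}\Gamma_t^{(2)})$ in $\e$, with full account taken of the Stratonovich cross-brackets between $\Gamma_t$ and $\d B_t$, collapses to the clean form above. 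Verifying this cancellation line by line is the bulk of the calculation and is the reason the second-order variation \eqref{e3-3} takes such a specific shape.
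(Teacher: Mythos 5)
Your overall approach---pairing Cartan's structure equations with the bi-vector $(\partial_t,\partial_\e)$ on the surface $U_t^\e$---is essentially the same as the paper's: the torsion-free commutation $D_t\frac{\partial}{\partial\e}=\frac{D}{\partial\e}\d_t$ used there is exactly the content of the first structure equation with no torsion term, and your decomposition $U_t^\e=\tilde U(\e)A(\e)$ reproduces \eqref{proof-1-4}. However, there is a genuine circularity in your derivation of $\theta_t^0=h(t)$. You claim $\varpi_t^0=\Gamma_t$ can be ``read off from \eqref{varpi},'' but at $\e=0$ formula \eqref{varpi} only yields $\varpi_t^0=\int_0^t{\rm R}_{U_s}(\circ\d B_s,\theta_s^0)$, and identifying this with $\Gamma_t=\int_0^t{\rm R}_{U_s}(\circ\d B_s,h(s))$ already presupposes $\theta_s^0=h(s)$, the very statement you are trying to prove. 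The correct step is to substitute the integral expression for $\varpi_t^0$ into \eqref{l3-1-1} at $\e=0$, obtaining the closed linear equation $\d\theta_t^0=\bigl(\int_0^t{\rm R}_{U_s}(\circ\d B_s,\theta_s^0)-\int_0^t{\rm R}_{U_s}(\circ\d B_s,h(s))\bigr)\circ\d B_t+h'(t)\d t$ with $\theta_0^0=h(0)$, observe that $\theta_t^0\equiv h(t)$ is a solution, and appeal to uniqueness; only then does $\varpi_t^0=\Gamma_t$ follow from \eqref{varpi}, not the other way round.

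A smaller conceptual slip: you attribute the validity of $\partial_\e G_t^\e=-(\Gamma_t+\e\Gamma_t^{(2)})G_t^\e$ to cancellations involving ``Stratonovich cross-brackets between $\Gamma_t$ and $\d B_t$.'' Those brackets concern the $t$-differential of $G_t^\e$ (they appear in the Girsanov step of Proposition \ref{prp4.5}), not the $\e$-derivative. Whatever obstruction there is to that exponential identity beyond first order in $\e$ comes from non-commutativity of $\Gamma_t$ with $\Gamma_t^{(2)}$ and is not repaired by the curvature-gradient and commutator pieces in \eqref{e3-3}; those terms are chosen so that the second variation $\eta_t$ vanishes in Lemma \ref{lem 4.3}. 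For the present lemma only the value $\partial_\e G_t^\e|_{\e=0}=-\Gamma_t$ is needed, which is unproblematic.
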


 \begin{proof}
We first use the structure equation
\begin{align*}
\d\varpi\Big(\d_t U_t^\varepsilon, \f \partial {\partial \varepsilon}U_t^\varepsilon\Big)&=-\varpi\wedge \varpi
\left(\d_t U_t^\e, \f \partial {\partial \varepsilon}U_t^\varepsilon \right)+{\rm R}_{U_t^\e}\left(\theta\left(\d_t U_t^\varepsilon\right),
\theta\left( \f
\partial {\partial \varepsilon}U_t^\varepsilon\right)\right)\\
&=-\sum_{i=1}^n\varpi\wedge \varpi\left(H_{e_i}(U_t^{\e})\circ d\tilde B^{\e,i}_t, \f \partial {\partial \varepsilon}U_t^\varepsilon\right)+{\rm R}_{U_t^\e}\left(\theta\left(\d_t U_t^\varepsilon\right),\theta_t^\varepsilon\right)\\
&={\rm R}_{U_t^\e}\left(\theta\left(\d_t U_t^\varepsilon\right),\theta_t^\varepsilon\right)
\end{align*}
to obtain
\begin{align*}
 \d \varpi_t^\e
&=\d\varpi\left(\d_t U_t^\varepsilon, \f \partial {\partial \varepsilon}U_t^\varepsilon\right)={\rm R}_{U_t^\e}\left(\theta\left(\d_t U_t^\varepsilon\right),
\theta_t^\varepsilon\right)={\rm R}_{U_t^\e}\left(G_t^\e\circ \d B_t^\e, \theta_t^\e\right).
 \end{align*}
Since at time $0$, the variation $\{U_0^\e;\e\in (-1,1)\}$ is parallel along the geodesic $\xi$,  $\varpi_0^\e=0$.  Then
\eqref{varpi} follows immediately.

Here we have used the Transfer Principle: on the compact manifold $M$ we could \emph{treat the Stratonovich integral as the ordinary derivative} (with respect to time variable)
in the computation.  Crucially we could \emph{exchange the order of differentiations and integrations. } The transfer principle is well known
for \emph{compact} manifolds, see e.g.
\cite{FM} or
\cite{MalliavinMon}, but not automatically apply to non-compact manifolds nor automatically to the less smooth case nor to the derivative processes. This is used in similar computations later in the article without further comment.

Due to the torsion free property,
the time derivative and the derivative for $\varepsilon$ could commute: $D_t\frac{\partial }{\partial \e}=
\frac{D}{\partial \e}\d_t$.
Also note that $\theta_t^\e=(U_t^{\e})^{-1}T\pi ( \f \partial {\partial \e} U_t^\e)$, so we have,
\begin{equation}\label{1derivative}
\begin{split}
{\d } \theta_t^\e&=(U_t^\e)^{-1}\left(D_t \left(\frac{\partial}{\partial \e}X_t^\e\right)
\right)=(U_t^\e)^{-1}\left(\f D {\partial \e}\d_t  X_t^\e\right)\\
&=(U_t^\e)^{-1}\left(\frac{D}{\partial \e}\left(U_t^\e G_t^\e \circ \d B_t^\e\right)\right)\\
&=\w_t^\e G_t^\e  \circ \d B_t^\e +\frac{\partial G_t^\e}{\partial \e}\circ \d B_t^\e+G_t^\e \circ
\d\left(\frac{\partial}{\partial \e}B_t^\e\right)\\
&=\w_t^\e G_t^\e  \circ \d B_t^\e-\big(\Gamma_t+\e\Gamma^{(2)}_t\big)G_t^\e\circ \d B_t^\e+
G_t^\e\big(h'(t)+\e\Phi_t\big)\d t,
\end{split}
\end{equation}
 where the fourth equality is due to
  \begin{equation}
 \label{proof-1-4}
\begin{aligned}
\f D {\partial \e} \left( U_t^\e G_t^\e \right)
&=U_t^\e \left(  \varpi_t^\e G_t^\e + \f {\partial}{\partial\e} G_t^\e \right).\end{aligned}
\end{equation}
So we have obtained the first equation
in \eqref{l3-1-1}. The initial condition in  \eqref{l3-1-1} follows trivially from the fact
$ \theta_0^\varepsilon=(U_0^{\e})^{-1}\pi ( \f \partial {\partial \e} U_0^\e))$, $\{U_0^\e; \e\in (-1,1)\}$ is a parallel orthonormal frame bundle along $\xi(\cdot)$ and $X_0^\e=\xi(\e)$.

Based on the fact that
$$\varpi_t^0=\int_0^t {\rm R}_{U_s} \left( \circ \d B_s, \theta_s^0\right), \qquad \Gamma_t=\int_0^t {\rm R}_{U_s} \left( \circ \d B_s, h(s)\right),$$
and taking $\e=0$ in \eqref{l3-1-1} we arrive at
\begin{align*}
d\theta^0_t=\left( \int_0^t {\rm R}_{U_s} \left( \circ \d B_s, \theta_s^0\right)-\int_0^t {\rm R}_{U_s}\left(\circ \d B_s,h(s)\right)
\right)\circ \d B_t+ h'(t)\d t, \quad \ \theta_0^0=h(0).
\end{align*}
It is easy to verify that $\theta^0_t=h(t)$ is the unique solution to above equation, proving the first line of \eqref{l3-1-2}.
Then plugging in $\theta_t^0=h(t)$ into   \eqref{varpi} to see that $\w^0_t=\Gamma_t$, so we have
\begin{align*}
\f D {\partial \e}\Big|_{\e=0}\left(U_t^\e G_t^\e e\right)=
U_t\left(\w_t^0 e+\frac{\partial}{\partial \e}\Big|_{\e=0}G_t^\e e\right)=U_t\left(\Gamma_t e-\Gamma_t e\right)=0,
\end{align*}
which is  the third line of \eqref{l3-1-2}. Finally, $D_t\Big(\frac{\partial X_t^\e}{\partial \e}\Big|_{\e=0}\Big)=U_t \d \theta_t^0=U_t h'(t)\d t$, giving
$ \frac{\partial X_t^\e}{\partial \e}\Big|_{\e=0}=U_th(t)$. This completes the proof.
\end{proof}
In particular, we obtain the following lemma:

\begin{lem}\label{lem 4.2}
For every $h\in L^{2,1}(\Omega;\R^n)$ with $h(0)\equiv v=U_0^{-1}\left(\frac{\partial}{\partial \e}\Big|_{\e=0}\xi(\e)\right)$,
we have
\begin{equation}\label{l3-2-1}
\frac{\partial }{\partial \e}\Big|_{\e=0}\w_t^\e=\int_0^t {\rm R}_{U_s}\left(\circ \d B_s,\eta_s\right)+\Gamma_t^{(2)},
\end{equation}
where $\eta_s:=\f {\partial \theta_s^\varepsilon} {\partial \varepsilon} \Big|_{\varepsilon=0}$ and $\Gamma_t^{(2)}$ is defined by \eqref{e3-3}.
\end{lem}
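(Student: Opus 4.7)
The plan is to differentiate the formula
$$\w_t^\e = \int_0^t \mathrm{R}_{U_s^\e}\big(G_s^\e \circ \d B_s^\e,\, \theta_s^\e\big)$$
from Lemma \ref{lem 4.1} with respect to $\e$ at $\e = 0$. Because $M$ is compact, the Transfer Principle lets us commute $\partial_\e|_0$ with the time integral and treat the Stratonovich integrand as an ordinary one. The chain rule then produces three contributions, coming from the variation of the frame $U_s^\e$, of the driving differential $G_s^\e \circ \d B_s^\e$, and of $\theta_s^\e$ respectively.

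The latter two contributions are immediate: from the definitions of $G_s^\e$ and $B_s^\e$ one reads off $\partial_\e|_0 G_s^\e = -\Gamma_s$ and $\partial_\e|_0(\circ \d B_s^\e) = h'(s)\d s$, so $\partial_\e|_0(G_s^\e \circ \d B_s^\e) = -\Gamma_s \circ \d B_s + h'(s)\d s$, and $\partial_\e|_0 \theta_s^\e = \eta_s$ by definition. These contribute
$$\int_0^t \mathrm{R}_{U_s}\big(-\Gamma_s \circ \d B_s + h'(s)\d s,\, h(s)\big) + \int_0^t \mathrm{R}_{U_s}(\circ \d B_s, \eta_s).$$
The frame variation is handled by decomposing $\partial_\e|_0 U_s^\e$ into horizontal and vertical components via Lemma \ref{lem 4.1}: since $\theta_s^0 = h(s)$ and $\w_s^0 = \Gamma_s$, the horizontal part is the horizontal lift of $U_s h(s)$ and the vertical part is the fundamental vector field on $OM$ associated to $\Gamma_s \in \so(n)$.

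Applied to the equivariant function $u \mapsto \mathrm{R}_u(a,b)$ on $OM$, the horizontal component of $\partial_\e|_0 U_s^\e$ produces the covariant derivative $(\nabla_{U_s h(s)}\mathbf{R})_{X_s}(U_s a, U_s b)$ pulled back via $U_s^{-1}$, which is precisely the $\nabla\mathbf{R}$ term in the definition of $\Gamma_t^{(2)}$. The vertical component, computed from the $SO(n)$-equivariance $\mathrm{R}_{ug}(a,b) = g^{-1}\mathrm{R}_u(ga, gb)g$ by differentiation at $g = I$ in direction $\Gamma_s$, gives the four algebraic terms $-\Gamma_s \mathrm{R}_u(a,b) + \mathrm{R}_u(\Gamma_s a, b) + \mathrm{R}_u(a, \Gamma_s b) + \mathrm{R}_u(a,b)\Gamma_s$. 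Setting $a = \circ \d B_s$, $b = h(s)$, the piece $\mathrm{R}_{U_s}(\Gamma_s \circ \d B_s, h(s))$ coming from this vertical derivative cancels the opposite-sign piece produced by the variation of $G_s^\e \circ \d B_s^\e$, and the surviving pieces reassemble into the four integrals of $\Gamma_t^{(2)}$ in \eqref{e3-3} plus the $\int_0^t \mathrm{R}_{U_s}(\circ \d B_s, \eta_s)$ on the right-hand side of \eqref{l3-2-1}.

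The main obstacle is the bookkeeping of the vertical-variation algebra on the frame bundle: in particular, the sum $-\Gamma_s \mathrm{R}_{U_s}(\circ \d B_s, h(s)) + \mathrm{R}_{U_s}(\circ \d B_s, h(s))\Gamma_s$ produced by the computation must be identified with the single product $-\Gamma_s \mathrm{R}_{U_s}(\circ \d B_s, h(s))$ appearing in \eqref{e3-3}, read as an $\so(n)$-valued Lie bracket. This $\so(n)$-valued interpretation is also what ensures that $\Gamma_t^{(2)}$ is antisymmetric, as asserted just after \eqref{e3-3}. Once this algebraic identification is made and the cancellation described above is carried out, no further analytic input is needed and \eqref{l3-2-1} follows.
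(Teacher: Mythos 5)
Your proof follows a genuinely different route from the paper. The paper's trick is to insert $G_s^\e(G_s^\e)^{-1}$ so that the frame appearing inside the curvature term is $\tilde U_s^\e := U_s^\e G_s^\e$; by the third line of \eqref{l3-1-2}, $\frac{D}{\partial\e}\big|_{\e=0}(\tilde U_s^\e e)=0$, so the $\e$-variation of this gauged frame is purely horizontal and the \emph{only} contribution from the frame is the base-point term $U_s^{-1}\nabla{\bf R}$; the algebraic pieces then come mechanically from the explicit $G_s^\e$, $(G_s^\e)^{-1}$, and $B_s^\e$ factors that the substitution introduced. You instead differentiate ${\rm R}_{U_s^\e}(G_s^\e\circ\d B_s^\e,\theta_s^\e)$ directly, split $\partial_\e|_0 U_s^\e$ into horizontal (giving the same $\nabla{\bf R}$ term, since $\theta_s^0=h(s)$) and vertical (the fundamental field of $\w_s^0=\Gamma_s$), and compute the vertical contribution from the $SO(n)$-equivariance of the curvature form. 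Your equivariance formula
$\partial_t\big|_0{\rm R}_{ue^{t\Gamma}}(a,b)=-\Gamma{\rm R}_u(a,b)+{\rm R}_u(\Gamma a,b)+{\rm R}_u(a,\Gamma b)+{\rm R}_u(a,b)\Gamma$
is correct, and the cancellation of $+{\rm R}_{U_s}(\Gamma_s\circ\d B_s,h(s))$ against $-{\rm R}_{U_s}(\Gamma_s\circ\d B_s,h(s))$ coming from $\partial_\e|_0(G_s^\e\circ\d B_s^\e)=-\Gamma_s\circ\d B_s+h'(s)\d s$ is exactly right. What the paper's gauge trick buys is not having to write the full four-term vertical-variation expansion; what your direct route buys is exposing that expansion, which makes the $\so(n)$-valuedness of the result manifest term by term.

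One point should be made sharper. You present the surviving pair $-\Gamma_s{\rm R}_{U_s}(\circ\d B_s,h(s))+{\rm R}_{U_s}(\circ\d B_s,h(s))\Gamma_s$ as something that ``must be identified with'' the single product $-\Gamma_s^h{\rm R}_{U_s}(\circ\d B_s,h(s))$ in \eqref{e3-3} ``read as a Lie bracket.'' State this plainly: your direct computation produces the commutator $-[\Gamma_s,{\rm R}_{U_s}(\circ\d B_s,h(s))]$, which is the only interpretation consistent with the assertion immediately after \eqref{e3-3} that $\Gamma_t^{(2),h}\in\so(n)$ (a raw product of two $\so(n)$-matrices is not antisymmetric in general). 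The same term appears if one runs the paper's own gauge trick to the end: the substitution ${\rm R}_{U_s^\e}(a,b)=(U_s^\e)^{-1}{\bf R}_{X_s^\e}(U_s^\e a,U_s^\e b)U_s^\e=G_s^\e\,{\rm R}_{\tilde U_s^\e}(\circ\d B_s^\e,(G_s^\e)^{-1}\theta_s^\e)\,(G_s^\e)^{-1}$ has a \emph{trailing} $(G_s^\e)^{-1}$ factor, and differentiating it produces precisely $+{\rm R}_{U_s}(\circ\d B_s,h(s))\Gamma_s$, the companion to $-\Gamma_s{\rm R}_{U_s}(\circ\d B_s,h(s))$ coming from the leading $G_s^\e$. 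So this is not a matter of ``reading''; both computations, carried to completion, yield the commutator, and your proof would be cleanest if it simply said so.

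Subject to that clarification, the proposal is correct, and it establishes \eqref{l3-2-1}.
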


\begin{proof}
By the first line of \eqref{l3-1-2} we have $\theta_t^0=h(t)$. We differentiate the integral expression \eqref{varpi}
for $\w_t^\e$ and apply the third line of \eqref{l3-1-2} to obtain
\begin{align*}
  \frac{\partial }{\partial \e}\Big|_{\e=0}\w_t^\e
  =&\frac{\partial}{\partial\varepsilon}\Big|_{\varepsilon=0}
  \int_0^t  (U_s^{\e})^{-1}
  {\rm { R}}_{X_s^\e}\left( U_s^\e G_s^\varepsilon\circ \d B_s^\varepsilon, U_s^\e \theta_s^\varepsilon\right)
  \\
  =&
  \int_0^t \frac{\partial}{\partial\varepsilon}\Big|_{\varepsilon=0}\left(G_s^\e  ( U_s^{\varepsilon}G_s^\varepsilon )^{-1}
 {\rm { R}}_{X_s^\e} \left(U_s^\varepsilon G_s^\varepsilon\circ \d B_s^\varepsilon, U_s^\varepsilon G_s^{\e}(G_s^{\e})^{-1}\theta_s^\varepsilon\right)
  \right)\\
  =&\int_0^t \left(\frac{\partial G_s^\e}{\partial \e}\Big|_{\e=0}\right) {\rm R}_{U_s}  \left(\circ \d B_s,\,\theta_s^0\right)+
  \int_0^t U_s ^{-1}\nabla {\rm { R}}_{X_s}\left(U_s\theta_s^0, U_s\circ \d B_s, U_s \,\theta_s^0\right)
  \\
  &+\int_0^t {\rm R}_{U_s}\left(\circ \d \frac{\partial B_s^\e}{\partial \e}\Big|_{\e=0}, \theta_s^0\right)+
  \int_0^t {\rm R}_{U_s}\left(\circ \d B_s, \left(\frac{\partial (G_s^\e)^{-1}}{\partial \e}\Big|_{\e=0}\right)\theta_s^0\right)\\
  &+\int_0^t \, {\rm R}_{U_s}\left(\circ \d B_s, \f {\partial \theta_s^\varepsilon} {\partial \varepsilon} \Big|_{\varepsilon=0}\right).
    \end{align*}
    Here the last term is $\int_0^t {\rm R}_{U_s}\left(\circ \d B_s,\eta_s\right)$, while the sum of the rest  is $\Gamma_t^{(2)}$, so we have
completed the proof.
\end{proof}

We observe that  $\eta_s=\f {\partial \theta_s^\varepsilon} {\partial \varepsilon} \Big|_{\varepsilon=0}$ is essentially the second variation of $\pi(U_s^\epsilon)$.

\begin{lem}\label{lem 4.3}
For every $h\in L^{2,1}(\Omega;\R^n)$ with $h(0)\equiv v=U_0^{-1}\left(\frac{\partial}{\partial \e}\Big|_{\e=0}\xi(\e)\right)$,
we have $\eta_t\equiv 0$ for all $t\in [0,1]$ and
\begin{equation}\label{l3-3-1}
\frac{D}{\partial \e}\Big|_{\e=0}\left(\frac{\partial X_t^\e}{\partial \e}\right)=U_t\Gamma_th(t).
\end{equation}
\end{lem}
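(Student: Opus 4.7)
The plan is to derive a linear homogeneous equation for $\eta_t$ with zero initial data and read off $\eta_t\equiv 0$, then reduce \eqref{l3-3-1} to the third identity of \eqref{l3-1-2}.

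First I will differentiate the $\theta_t^\e$-equation \eqref{l3-1-1} in $\e$ at $\e=0$. The three terms on the right hand side of \eqref{l3-1-1} contribute as follows. From $B_t^{\e,h}=B_t+\e\int_0^t h'\,ds+\tfrac{\e^2}{2}\int_0^t \Phi^h_s\,ds$ and $G_t^\e=\exp(-\e\Gamma_t-\tfrac{\e^2}{2}\Gamma_t^{(2)})$ one gets $\partial_\e|_{\e=0}(G_t^\e\circ dB_t^\e)=-\Gamma_t\circ dB_t+h'(t)\,dt$, so differentiating $-(\Gamma_t+\e\Gamma_t^{(2)})G_t^\e\circ dB_t^\e$ yields $-\Gamma_t^{(2)}\circ dB_t+\Gamma_t^2\circ dB_t-\Gamma_t h'(t)\,dt$. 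Differentiating $\w_t^\e G_t^\e\circ dB_t^\e$ uses Lemma \ref{lem 4.2}, giving $\bigl(\int_0^t\mathrm{R}_{U_s}(\circ dB_s,\eta_s)+\Gamma_t^{(2)}\bigr)\circ dB_t+\Gamma_t(-\Gamma_t\circ dB_t+h'(t)\,dt)$. Differentiating $G_t^\e(h'(t)+\e\Phi_t)\,dt$ produces $(-\Gamma_t h'(t)+\Phi_t)\,dt=0$ because $\Phi_t=\Gamma_th'(t)$. Summing, the $\Gamma_t^{(2)}$-terms, the $\Gamma_t^2\circ dB_t$-terms and the $\Gamma_t h'(t)\,dt$-terms cancel pairwise, leaving
\begin{equation*}
d\eta_t=\left(\int_0^t\mathrm{R}_{U_s}(\circ dB_s,\eta_s)\right)\circ dB_t.
\end{equation*}

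Next I will verify $\eta_0=0$. Since $\xi$ is a geodesic, $\xi'(\e)$ is parallel along $\xi$; combined with the hypothesis that $\{U_0^\e\}$ is a parallel orthonormal frame along $\xi$, the vector $(U_0^\e)^{-1}\xi'(\e)$ is constant in $\e$ and equals $h(0)=v$. Hence $\theta_0^\e\equiv v$, so $\eta_0=\partial_\e|_{\e=0}\theta_0^\e=0$. The displayed equation for $\eta_t$ is linear and homogeneous in the unknown (the kernel inside the Stratonovich integral depends linearly on $\eta_s$), so by pathwise uniqueness $\eta_t\equiv 0$ on $[0,1]$.

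For the second assertion, I write $\partial_\e X_t^\e=U_t^\e\theta_t^\e=(U_t^\e G_t^\e)\,(G_t^\e)^{-1}\theta_t^\e$. The product rule for the covariant derivative $\tfrac{D}{\partial\e}$ along the curve $\e\mapsto X_t^\e$ gives
\begin{equation*}
\frac{D}{\partial\e}\Big|_{\e=0}\bigl((U_t^\e G_t^\e)f(\e)\bigr)=\frac{D}{\partial\e}\Big|_{\e=0}\bigl((U_t^\e G_t^\e)f(0)\bigr)+U_t^0 G_t^0\,f'(0),
\end{equation*}
and the first term vanishes by the third identity of \eqref{l3-1-2} applied to $e=f(0)$. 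Taking $f(\e)=(G_t^\e)^{-1}\theta_t^\e$, so that $f(0)=h(t)$, we compute $f'(0)=(\partial_\e|_{\e=0}(G_t^\e)^{-1})\theta_t^0+\eta_t=\Gamma_t h(t)+0$, since $(G_t^\e)^{-1}=\exp(\e\Gamma_t+\tfrac{\e^2}{2}\Gamma_t^{(2)})$ and $\eta_t=0$ by the first part. This gives $\tfrac{D}{\partial\e}|_{\e=0}(\partial_\e X_t^\e)=U_t\Gamma_th(t)$.

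The main obstacle is the bookkeeping in the first step: it requires carefully tracking the $\partial_\e$ of the Stratonovich differential $G_t^\e\circ dB_t^\e$ and verifying that the $\Gamma_t^{(2)}$-term contributed by $G_t^\e$ (and its interaction with $\Gamma_t$) precisely cancels the explicit $\Gamma_t^{(2)}$-term appearing in the equation for $\theta_t^\e$ and in $\partial_\e|_{\e=0}\w_t^\e$. Once this cancellation is verified, $\eta_t$ satisfies a homogeneous linear SDE and the rest is straightforward.
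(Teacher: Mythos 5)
Your proof is correct and follows essentially the same route as the paper's: differentiate the $\theta_t^\e$-equation \eqref{l3-1-1} at $\e=0$, use Lemma~\ref{lem 4.2} for $\partial_\e|_{\e=0}\w_t^\e$, observe the cancellations leaving the homogeneous linear equation $\d\eta_t=\left(\int_0^t \mathrm{R}_{U_s}(\circ\d B_s,\eta_s)\right)\circ\d B_t$ with $\eta_0=0$, and conclude $\eta\equiv 0$ by uniqueness. Your derivation of \eqref{l3-3-1} via writing $\partial_\e X_t^\e=(U_t^\e G_t^\e)(G_t^\e)^{-1}\theta_t^\e$ and invoking the third identity of \eqref{l3-1-2} is a harmless reformulation of the paper's direct expansion $\frac{D}{\partial\e}\big|_{\e=0}(U_t^\e\theta_t^\e)=U_t(\w_t^0\theta_t^0+\eta_t)$, since that identity encodes $\w_t^0=\Gamma_t$ and $\partial_\e|_{\e=0}G_t^\e=-\Gamma_t$.
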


\begin{proof}
We recall the first equation of \eqref{l3-1-1}
$$d\theta_t^\e=-\big(\Gamma_t+\e\Gamma^{(2)}_t\big)G_t^\e\circ \d B_t^\e+\w_t^\e G_t^\e\circ \d B_t^\e+
G_t^\e\big(h'(t)+\e \Gamma_th'(t))\d t.$$
Differentiating it at $\e=0$, using   \eqref{l3-2-1} and
the following fact
\begin{equation*}\label{summarise}
\w_t^0=\Gamma_t, \quad \frac{\partial B_t^\e}{\partial \e}\Big|_{\e=0}=h'(t), \quad
\frac{\partial G_t^\e}{\partial \e}\Big|_{\e=0}=-\Gamma_t, \quad  \Phi_t=\Gamma_t h'(t),
\end{equation*}
we could obtain
\begin{equation*}\label{l3-3-2}
\begin{split}
\d\eta_t=&-\left(\Gamma_t^{(2)}+\Gamma_t\frac{\partial G_t^\e}{\partial \e}\Big|_{\e=0}\right)\circ \d B_t
-\Gamma_t \circ \d\left(\frac{\partial B_t^\e}{\partial \e}\Big|_{\e=0}\right)+\left(\w_t^0\frac{\partial G_t^\e}{\partial \e}\Big|_{\e=0}
+\frac{\partial \w_t^\e}{\partial \e}\Big|_{\e=0}\right)\circ \d B_t\\
&+\w_t^0 \d\left(\frac{\partial B_t^\e}{\partial \e}\Big|_{\e=0}\right)+\frac{\partial G_t^\e}{\partial \e}\Big|_{\e=0}h'(t)\d t
+ \Gamma_th'(t)\d t\\
=&\left(\int_0^t {\rm R}_{U_s}\left(\circ \d B_s, \eta_s\right)\right)\circ \d B_t.
\end{split}
\end{equation*}

At the same time, since $X_0^\e=\xi(\e)$, $\xi(\cdot)$ is a geodesic, and also $\{U_0^\e, \e\in (-1,1)\}$ is a parallel orthonormal frame bundle along $\xi(\cdot)$, we could verify that
$$\eta_0=\frac{\partial \theta_0^\e}{\partial \e}\Big|_{\e=0}=U_0^{-1}\left(\frac{D}{\partial \e}\Big|_{\e=0}
\left(\frac{\partial \xi(\e)}{\partial \e}\right)\right)=0.$$
Observe that  the unique solution to following equation is $v_t\equiv 0$ \begin{align*}
\d v_t=\left(\int_0^t {\rm R}_{U_s}\left(\circ \d B_s, v_s \right)\right)\circ \d B_t,\quad \ v_0=0.
\end{align*}
Then we derive that
$\eta_t\equiv 0$ for all $t\in [0,1]$.

Moreover, note that by definition we have $\frac{\partial X_t^\e}{\partial \e}=U_t^\e \theta_t^\e$, due to
the fact $\eta_t=\frac{\partial \theta_t^\e}{\partial \e}\Big|_{\e=0}\equiv 0$ we obtain
\begin{align*}
\frac{D}{\partial \e}\Big|_{\e=0}\left(\frac{\partial X_t^\e}{\partial \e}\right)
=\frac{D}{\partial \e}\Big|_{\e=0}\left(U_t^\e \theta_t^\e\right)=
U_t\left(\w_t^0\theta_t^0+\frac{\partial \theta_t^\e}{\partial \e}\Big|_{\e=0}\right)
=U_t\Gamma_th(t).
\end{align*}
Now we have obtained \eqref{l3-3-1}. \end{proof}

\subsection{Proof for the 2nd order gradient formula on a compact manifold}
\begin{prp}\label{prp4.5}
Let $t>0$, $x\in M$ and $v\in T_x M$.
Then for any $f\in C_b(M)$ and $h\in L^{2,1}(\Omega;\R^n)$ satisfying that $h(0)=U_0^{-1}v$ and $h(t)=0~a.s.$,
we have
\begin{equation}\label{p3-1-0}
\big\langle \nabla P_tf(x), v\big\rangle_{T_x M}=-\EE\left[ f(X_t^x)\int_0^t \<\Theta_s^h, dB_s\>\right],
\end{equation}
where $ \Theta^h_t:=h'(t)+\frac{1}{2}{\rm ric}_{U_t}(h(t))$. Furthermore,
\begin{equation}\label{p3-1-1}
\begin{split}
&\quad \big\langle \nabla^2 P_t f(x), v\otimes v\big\rangle_{T_x M \otimes T_x M}\\
&=\EE\left[f(X_t^x)\left(
\left(\int_0^t\big\langle \Theta_s^h, \d B_s\big\rangle\right)^2-
\int_0^t \big\langle \Lambda_s^h, \d B_s\big\rangle-\int_0^t \left|\Theta_s^h\right|^2 \d s\right)\right].
\end{split}
\end{equation}
\end{prp}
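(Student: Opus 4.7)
My plan is to read off both formulas by differentiating, once and twice, a Girsanov-based representation of $P_tf(\xi(\varepsilon))$ in the variation parameter $\varepsilon$. The perturbed horizontal Brownian motion $U_t^{\varepsilon,h}$ of Definition \ref{def4.1} is driven by $\tilde B^{\varepsilon,h}$, obtained from $B$ by the orthogonal rotation $G^{\varepsilon,h}$ and the translation by the Cameron--Martin path $\varepsilon\int_0^\cdot h'\,ds + \frac{\varepsilon^2}{2}\int_0^\cdot \Phi_s\,ds$. Since rotations by $SO(n)$-valued processes preserve the Brownian character of the martingale part and the translation is by a Cameron--Martin vector, there is a Radon--Nikodym density $M_t^\varepsilon$ such that under $M_t^\varepsilon\,d\mathbb{P}$ the process $\tilde B^{\varepsilon,h}$ is a standard Brownian motion. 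Consequently $X_t^{\varepsilon,\xi(\varepsilon),h}$ is a Brownian motion on $M$ starting at $\xi(\varepsilon)$ under this measure, giving the key identity $P_tf(\xi(\varepsilon)) = \EE\bigl[f(X_t^{\varepsilon,\xi(\varepsilon),h})\, M_t^\varepsilon\bigr]$. Both sides are smooth in $\varepsilon$ because $M$ is compact and $h\in L^{2,1}$, so differentiation and expectation commute freely.

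For \eqref{p3-1-0}, a single differentiation at $\varepsilon=0$ produces $\langle \nabla P_tf(x),v\rangle$ on the left (using $\xi'(0)=v$) and, by Leibniz together with Lemma \ref{lem 4.1}, the sum $\EE[\langle \nabla f(X_t), U_t h(t)\rangle] + \EE[f(X_t)\,\partial_\varepsilon M_t^\varepsilon|_{\varepsilon=0}]$ on the right. The first term vanishes because $h(t)=0$. The second term, when computed from the explicit form of $B^{\varepsilon,h}$ and $G^{\varepsilon,h}$, contributes $-\int_0^t\langle h'(s),dB_s\rangle$ from the translation part, together with $-\tfrac12\int_0^t\langle \ric_{U_s}h(s),dB_s\rangle$ as the It\^o--Stratonovich correction coming from the rotation $G^{\varepsilon,h}$. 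These combine into precisely $-\int_0^t\langle \Theta_s^h,dB_s\rangle$.

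For \eqref{p3-1-1}, I differentiate the same identity twice at $\varepsilon=0$. The left-hand side collapses to $\langle \nabla^2 P_tf(x), v\otimes v\rangle$ because $\xi$ is a geodesic, so $\frac{D\xi'}{d\varepsilon}|_{0}=0$ kills the only potentially surviving $\nabla P_tf$ term. On the right, the Leibniz expansion produces four families of terms, three of which I can eliminate immediately: (i) the pure $\nabla^2 f(X_t)(U_th(t), U_th(t))$ term is zero because $h(t)=0$; (ii) the $\nabla f$ term involving the second variation $\frac{D}{\partial\varepsilon}|_{0}\frac{\partial X_t^\varepsilon}{\partial\varepsilon}$ equals $\EE[\langle\nabla f(X_t),U_t\Gamma_t h(t)\rangle]$ by Lemma \ref{lem 4.3}, again vanishing at time $t$; (iii) the cross term $2\EE[\langle \nabla f(X_t), U_th(t)\rangle\,\partial_\varepsilon M_t^\varepsilon|_{0}]$ likewise dies. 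The use of Lemma \ref{lem 4.3}'s $\eta_t\equiv 0$ is essential here, since it is exactly what prevents higher-order variations of $\theta_t^\varepsilon$ from leaking into these Leibniz terms. What remains is $\EE[f(X_t)\,\partial_\varepsilon^2 M_t^\varepsilon|_{0}]$.

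The main work, and the main obstacle I expect, is computing $\partial_\varepsilon^2 M_t^\varepsilon|_{0}$ and matching it to $\bigl(\int_0^t\langle\Theta,dB\rangle\bigr)^2 - \int_0^t\langle\Lambda,dB\rangle - \int_0^t|\Theta|^2\,ds$. The square term comes from differentiating the Dol\'eans exponent twice; the $-\int_0^t|\Theta|^2\,ds$ from its quadratic compensator; and the $-\int_0^t\langle\Lambda,dB\rangle$ piece consolidates the genuine second-order translation contribution $\partial_\varepsilon^2|_{0}B_s^{\varepsilon,h}=\Phi_s=\Gamma_s h'(s)$, the second-order rotation contribution $\Gamma_t^{(2),h}$, and all of the It\^o--Stratonovich and cross-variation corrections generated by the interaction of the rotation and the translation. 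The entire purpose of the delicate choices of $\Phi$ in $B^{\varepsilon,h}$ and of $\Gamma^{(2),h}$ in $G^{\varepsilon,h}$ is that these corrections reassemble exactly into the four summands of $\Lambda_t^h$ in \eqref{e3-2a}. The technical challenge is therefore the bookkeeping of this collapse, combined with checking that one may differentiate under the expectation, both of which are facilitated by compactness of $M$ and boundedness of $f$.
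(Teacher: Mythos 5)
Your plan follows the paper's proof essentially verbatim: the Girsanov identity $P_tf(\xi(\varepsilon))=\EE\bigl[f(X_t^{\varepsilon,\xi(\varepsilon)})M_t^\varepsilon\bigr]$, differentiation once and twice at $\varepsilon=0$, elimination of the $\nabla f$ and $\nabla^2 f$ terms via $h(t)=0$ and the second-variation formula $\frac{D}{\partial\varepsilon}\big|_0\frac{\partial X_t^\varepsilon}{\partial\varepsilon}=U_t\Gamma_th(t)$ from Lemma~\ref{lem 4.3}, and collection of the surviving $\partial_\varepsilon^{k} M_t^\varepsilon|_0$ terms. The ``bookkeeping'' you flag is precisely the paper's computation of $\int_0^t G_s^\varepsilon\circ\d B_s^\varepsilon=\int_0^t G_s^\varepsilon\,\d B_s+\varepsilon\int_0^t G_s^\varepsilon\Theta_s\,\d s+\frac{\varepsilon^2}{2}\int_0^t G_s^\varepsilon\Lambda_s\,\d s$, after which $M_t^\varepsilon$ assumes the clean Dol\'eans form and the two $\varepsilon$-derivatives are read off directly.
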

\begin{proof}
We take $\xi(\cdot)$ to be a geodesic with initial value $\xi(0)=x$ and initial velocity $\frac{\partial \xi(\e)}{\partial \e}\Big|_{\e=0}=v$. Let
$\{U_0^\e\in (-1,1)\}$ denote the parallel orthonormal frame bundle along $\xi(\cdot)$ with $U_0^\e\Big|_{\e=0}=U_0$. In particular, it holds
that $\pi(U_0^\e)=\xi(\e)$. Recall that $U_t^\e$ is the solution to
\eqref{sde2} with initial value $U_0^\e$ chosen above.
It holds that
\begin{align*}
\int_0^t G_s^\e \circ dB_s^\e&=
\int_0^t G_s^\e \circ \d B_s+\int_0^t G_s^\e \left(\e h'(s)+\frac{\e^2}{2}\Gamma_s\,h'(s)\right)\d s\\
&=\int_0^t G_s^\e  \d B_s+\int_0^t \frac{1}{2}\d \langle G_{\cdot}^\e, B_{\cdot}\rangle_s+ \int_0^t G_s^\e \left(\e h'(s)+\frac{\e^2}{2}
\Gamma_s\,h'(s)\right)\d s\\
&=\int_0^t G_s^\e  \d B_s+\e\int_0^t G_s^\e \Theta_s \d s +\frac{\e^2}{2}\int_0^t G_s^\e \Lambda_s \d s.
\end{align*}
Here we have used that
\begin{align*}
\d \langle G_{\cdot}^\e, B_{\cdot}\rangle_t&=
-\e G_t^\varepsilon \d\langle\Gamma_\cdot , B_{\cdot}\rangle_t-\frac{\e^2}{2}
G_t^\varepsilon \d\langle\Gamma_\cdot^{(2)} , B_{\cdot}\rangle_t\\
&=\frac{\e}{2}G_t^\e\;\ric_{U_t}(h(t))\d t+\frac{\e^2}{2}G_t^\e\Big[U_t^{-1}\nabla {\rm Ric}^\sharp_{\pi(U_t)}\left(U_th(t),
U_t h(t)\right)\\
&\ +\Gamma_t \; \ric_{U_t}(h(t))-\; \ric_{U_t}\left(\Gamma_t h(t)\right)\Big].
\end{align*}
Note that $W_t^\e:=\int_0^t G_s^\e \d B_s$ is still an $\R^n$-valued Brownian motion, so we have
\begin{align*}
dU_t^\e&=H(U_t^\e)\circ \left(\d W_t^\e+G_s^\e\left(\e\Theta_t+\frac{\e^2}{2}\Lambda_t\right)\d t\right).
\end{align*}
Let
\begin{align*}
M_t^\e:=\exp\left(-\int_0^t \left\langle \e\Theta_s+\frac{\e^2}{2}\Lambda_s, \d B_s\right\rangle-
\int_0^t \left(\frac{\e^2}{2}\left|\Theta_s+\frac{\e}{2}\Lambda_s\right|^2\right) \d s \right).
\end{align*}
Then by the Girsanov theorem, the distribution of
$\{U_{s}^\e; s\in [0,t]\}$ under $d\mathbb{Q}^\e:=M_t^\e d\mathbb{P}$ is the same as that
of $\{U_s^{0,\e}; s\in [0,t]\}$, where  $U_\cdot^{0,\e}$ is the solution to equation \eqref{sde1} with initial value $U_0^{0,\e}=U_0^\e$.
Therefore we obtain
\begin{equation}\label{p3-1-1a}
P_t f(\xi(\e))=\EE\left[f(X_t^{\xi(\e)})\right]=\EE\left[f(X_t^{\e,\xi(\e)})M_t^\e\right],
\end{equation}
where $X_t^{\e,\xi(\e)}=\pi(U_t^\e)$, $X_t^{\xi(\e)}=\pi(U_t^{0,\e})$.

We first assume $f\in C_b^2(M)$, differentiating \eqref{p3-1-1a} with respect to $\e$ yields that
\begin{equation}\label{p3-1-2b}
\begin{split}
\big\langle\nabla P_t f (x), v\big\rangle_{T_x M}&=\frac{\partial}{\partial \e}\bigg|_{\e=0} P_t f\big(\xi(\e)\big)\\
&=\EE\left[\frac{\partial }{\partial \e}\bigg|_{\e=0}f\big(X_t^{\e,\xi(\e)}\big)\right]+\EE\left[f(X_t)\left(\frac{\partial }{\partial \e}\bigg|_{\e=0} M_t^\e\right)\right],
\end{split}
\end{equation}
Another round of differentiation gives:
\begin{equation}\label{p3-1-3}
\aligned
&\left\langle \nabla^2 P_t f(x), v\otimes v\right\rangle_{T_x M \otimes T_x M}=\frac{\partial^2}{\partial \e^2}\bigg|_{\e=0} P_t f(\xi(\e))
\\&=\EE\left[\frac{\partial^2 }{\partial \e^2}\bigg|_{\e=0}f\big(X_t^{\e,\xi(\e)}\big)\right]+2
\EE\left[\left(\frac{\partial }{\partial \e}\bigg|_{\e=0}f\big(X_t^{\e,\xi(\e)}\big)\right)
\left(\frac{\partial }{\partial \e}\bigg|_{\e=0} M_t^\e\right)\right]\\&+\EE\left[f\big(X_t^x\big)\frac{\partial^2 }{\partial \e^2}\bigg|_{\e=0}M_t^\e\right].
\endaligned
\end{equation}
According to the last line of \eqref{l3-1-2}, \eqref{l3-3-1}, the definition of $M_t^\e$ and the fact that $h(t)\equiv 0$ we derive
$$\frac{\partial }{\partial \e}\bigg|_{\e=0}f\Big(X_t^{\e,\xi(\e)}\Big)=\langle \nabla f(X_t^x),U_th(t)\rangle_{T_{X_t^x}M}=0$$
and also,
$$ \frac{\partial }{\partial \e}\bigg|_{\e=0} M_t^\e=-\int_0^t\langle \Theta_s, \d B_s\rangle.$$
Furthermore,
\begin{align*}
&\frac{\partial^2 }{\partial \e^2}\bigg|_{\e=0}f\Big(X_t^{\e,\xi(\e)}\Big)=\left \langle \nabla^2 f\big(X_t^x\big), \frac{\partial X_t^{\e,\xi(\e)}}{\partial \e}\bigg|_{\e=0}\bigotimes\frac{\partial X_t^{\e,\xi(\e)}}{\partial \e}\bigg|_{\e=0}\right\rangle_{T_{X_t^x}M\otimes T_{X_t^x}M}\\&\quad\quad\quad\quad\quad\quad\quad\quad \quad\quad +\left\langle \nabla f\big(X_t^x\big), \frac{D}{\partial \e}\bigg|_{\e=0}\left(\frac{\partial X_t^{\e,\xi(\e)}}{\partial \e}\right)\right\rangle_{T_{X_t^x}M}\\
&\quad\quad=\left\langle\nabla^2 f(X_t^x), U_t h(t)\otimes U_t h(t)\right\rangle_{T_{X_t^x}M\otimes
T_{X_t^x}M}+\left\langle \nabla f\big(X_t^x\big),U_t\Gamma_t h(t)\right\rangle_{T_{X_t^x}M}=0,\\
&\frac{\partial^2 }{\partial \e^2}\Big|_{\e=0} M_t^\e=\left(\int_0^t\langle \Theta_s, \d B_s\rangle\right)^2-
\int_0^t \langle \Lambda_s, \d B_s\rangle-\int_0^t \left|\Theta_s\right|^2 \d s.
\end{align*}
Crucially this special choice of variation ensures that $\frac{\partial^2 }{\partial \e^2}\bigg|_{\e=0}f\Big(X_t^{\e,\xi(\e)}\Big)$  depends only on $h(t)$, not on the history of the process $h$.

Putting these  back to \eqref{p3-1-2b} and \eqref{p3-1-3} yields \eqref{p3-1-0}, \eqref{p3-1-1} for $f\in C_b^2(M)$.
By standard approximation procedure and the compact property of $M$ we  see that these equalities still hold for any $f\in C_b(M)$.
\end{proof}

\section{Quantitative Cut-off Processes}\label{s2-1}
\label{cut-off}
\emph{From now on, we assume that $M$ is an $n$-dimensional general complete Riemannian manifold, not necessarily compact.}

In this section we introduce a class of cut-off processes satisfying  estimates
 crucial for the localisation procedures, which we shall apply later to (\ref{p3-1-1})  and to
obtain the asymptotic gradient estimates for the logarithmic heat kernel.

Since geodesic balls have typically non-regular boundary, we firstly construct a family of relatively compact open sets  $\{D_m\}_{m=1}^\infty$ with smooth boundary which plays the roles of geodesic balls and such that $\cup_{m=1}^\infty D_m=M$. Our localisation procedure crucially relies on $ D_m$  has smooth boundaries, see Lemma \ref{lem7.1}.
We first use a result in
Greene and Wu \cite{GW}
on the existence of a smooth approximate distance function,  which is valid for complete manifold, and then construct a family of cut off vector fields adapted to $\{D_m\}_{m=1}^\infty$. Fixing an $o\in M$, denote by $d$ the Riemannian distance function on $M$ from $o$.  Since $M$ is complete,
according to  \cite{GW} there exists a non-negative smooth function $\hat d:M\rightarrow\R_+$ with the property that $0<|\nabla \hat d|\le 1$  and
$$\left|\hat d(x)-\frac{1}{2}d(x)\right|<1,\quad \forall\; x\in M.$$
For every non-negative $m$, define $D_m:=\hat d^{-1}((-\infty,m)):=\{z \in M; \hat d(z)<m\}$,
then it is easy to verify
$B_o(2m-2)\subset D_m\subset  B_o(2m+2)$, where $B_o(r):=\{z \in M; d(z)<r\}$ is the geodesic ball centred at $o$ with radius $r$. Let $\phi:\R\rightarrow [0,1]$ be a smooth function such that
\begin{equation}\label{eq2.1}
\phi(r)=\left\{ \begin{array}{ll} 1, \qquad &r\leq1\\
\in (0,1), &r\in (1,2)\\
0, &r\geq2.\end{array}\right.
\end{equation}
Setting
\begin{equation}\label{eq2.2}
f_m(z):=\phi\Big(\hat d(z)-m+2\Big),\quad z\in M,
\end{equation}
then it is easy to see that
\begin{equation*}
f_m(z)=\left\{ \begin{array}{ll} 1, \qquad &\text{if}~~z \in \overline D_{m-1}\\
0, \qquad &\text{if}~~z \in D_m^c\\
\in (0,1), \qquad &\text{orthewise}\end{array}\right.
\end{equation*}
and
$D_m=\{z \in M; f_m(z)>0\}$. Without loss of generality we can assume that $D_m$ is a bounded connected open set
(otherwise we could take the connected component of $D_m$ containing $B_o(2m-2)$). Moreover, since
$\partial D_m=\{z\in M; \hat d (z)=m\}$ and $|\nabla \hat d(z)|\neq 0$ for all $z\in M$, we know $\partial D_m$ is a smooth
$n-1$ dimensional submanifold of $M$.

As before we suppose that $\{U_t\}_{0\le t<\zeta(x)}$ is the solution to the canonical horizontal equation \eqref{sde1}
with $\zeta(x)$ denoting its explosion time, and $\{X_t^x:=\pi(U_t)\}_{0\le t<\zeta(x)}$ is a Brownian motion
on $M$ with initial value $x:=\pi(U_0)$.

Let  $\partial$ denote the cemetery state for $M$ and set $\bar M=M\cup \{\partial\}$. Given a $x\in M$ we let
$$P_x(\bar M):=\{\gamma\in C([0,1];\bar M): \; \gamma(0)=x\}$$ denote
 the collection of all $\bar M$-valued continuous paths with initial vale $x$. Let
 $\mu_x$ denote the Brownian motion measure on $P_x(\bar M)$.
We also  refer  the natural filtration of the canonical process $\gamma(\cdot)$ as the canonical filtration on $P_x(\bar M)$,
which is augmented to be complete and right continuous as usual.

It is well known that the distribution of $\{X_{t}^x\}_{0\le t<\zeta(x)}$ and $\{U_t\}_{0\le t<\zeta(x)}$ under
$\Pp$ is the same as that of the canonical process $\{\gamma(t)\}_{0\le t<\zeta(\gamma)}$ and its horizontal lift
$\{U_t(\gamma)\}_{0\le t<\zeta(\gamma)}$ under $\mu_x$, where $\zeta(\gamma)$ denotes the explosion time
of  $\gamma(\cdot)$.  Set
 $$\tau_m(\gamma)=\tau_{D_m}(\gamma):=\inf\left\{s \ge 0:\ \gamma(s) \notin D_m\right\}.$$

\begin{lem}\label{lem5.1}
For any $m \in \mathbb{N}$ there exists a stochastic process (vector field)
$l_m: [0,1]\times P_x(\bar M) \rightarrow [0,1]$, such that
\begin{enumerate}

\item [(1)]  $l_m(t,\gamma)=\left\{ \begin{array}{ll} 1, \qquad &t \le \tau_{m-1}(\gamma)\wedge 1\\
0, &t > \tau_{m}(\gamma)\end{array}\right..$

\item[(2)]   {\bf Absolute continuity:}
$l_m(t,\cdot)$ is adapted to the canonical filtration and
$l_m(\cdot,\gamma)$ is absolutely continuous for $\mu_x$-a.s. $\gamma \in P_x(\bar M)$.

\item[(3)]  {\bf Local uniform moment estimates:}
For every positive integer $k \in \N $, we have
\begin{equation}\label{eq2.4}
\sup_{x \in D_{m-1}}\int_{P_x(\bar M)} \int^1_0|l_m'(s, \gamma)|^k \d s\; \mu_x(\d\gamma)\le C_1(m,k)
\end{equation}
for some positive constant $C_1(m,k)$ (which may depends on $m$ and $k$).
\end{enumerate}

\end{lem}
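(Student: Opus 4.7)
The plan is to build $l_m$ as the output of a pathwise ODE driven by an adapted spatial quantity that vanishes on $\overline{D}_{m-1}$ and becomes singular at $\partial D_m$, so that $l_m$ stays at $1$ until $\gamma$ enters the annulus $D_m\setminus\overline{D}_{m-1}$ and is forced to $0$ by the time $\gamma$ reaches $\partial D_m$. Concretely, using the Greene--Wu smooth distance $\hat d$ and the cut-off $f_m$ from (2.2), I would fix a smooth function $a_m\colon D_m\to[0,\infty)$ with $a_m\equiv 0$ on $\overline{D}_{m-1}$ and $a_m(z)(m-\hat d(z))^2\to c>0$ as $z\to\partial D_m$---for instance $a_m(z):=(1-f_m(z))^2(m-\hat d(z))^{-2}$, well defined on $D_m$ because $|\nabla\hat d|>0$---and then set
$$l_m(t,\gamma):=\exp\!\Big(-\int_0^{t\wedge\tau_m(\gamma)}a_m(\gamma(s))\,ds\Big),$$
extended by $0$ on $[\tau_m(\gamma),1]$.

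Verifying (1) and (2) should be routine. Property (1)(a) holds because $a_m\equiv 0$ on $\overline{D}_{m-1}$, so the integrand is identically $0$ on $[0,\tau_{m-1}\wedge 1]$. For (1)(b), smoothness of $\partial D_m$ allows one to work in boundary-normal coordinates near the exit point, where $m-\hat d(\gamma(s))$ behaves like a $1$-dimensional Brownian motion hitting $0$ at $\tau_m$; applying It\^o's formula to $\log(m-\hat d(\gamma(s)))$ (or a Williams-type decomposition) then yields $\int_0^{\tau_m}(m-\hat d(\gamma(s)))^{-2}\,ds=+\infty$ for $\mu_x$-a.e.\ $\gamma$, so $l_m(\tau_m^-)=0$ and the extension by $0$ is continuous. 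Adaptedness is clear from the construction, and absolute continuity on $[0,1]$ follows from the integral representation (the total variation equals $1$).

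The main obstacle is (3), the uniform $L^k$ moment estimate. One has $|l_m'(s,\gamma)|^k=a_m(\gamma(s))^k\exp(-k\int_0^s a_m(\gamma(r))\,dr)$, so the singularity $a_m^k\sim (m-\hat d)^{-2k}$ must be absorbed by the exponential weight. Heuristically, setting $u(s):=\int_0^s a_m(\gamma(r))\,dr$, the change of variable $s\leftrightarrow u$ turns the $t$-integral into $\int_0^\infty a_m^{k-1}e^{-ku}\,du$, and along a typical Brownian trajectory the relation $a_m\sim e^u$ near $\tau_m$ makes the integrand of order $e^{-u}$, uniformly integrable. To promote this to a bound uniform over $x\in D_{m-1}$, I would express the expectation as a space-time integral against the Dirichlet heat kernel $p_{D_m}(s,x,z)$ on the smooth bounded domain $D_m$ and exploit both the boundary decay $p_{D_m}(s,x,z)\lesssim (m-\hat d(z))\wedge 1$ and the Li--Yau/Varopoulos Gaussian upper bounds; equivalently, $e^{-k\int_0^s a_m}p_{D_m}$ is the Feynman--Kac kernel of $\tfrac12\Delta-ka_m$ on $D_m$, whose corresponding semigroup is contractive and smoothing. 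Making the cancellation between the blow-up of $a_m^k$, the boundary vanishing of $p_{D_m}$, and the exponential damping rigorous, uniformly in every integer $k$ and every $x\in D_{m-1}$, is the genuinely delicate step; the smooth boundary of $D_m$ ensured by Greene--Wu is precisely what enables the required Dirichlet heat kernel estimates.
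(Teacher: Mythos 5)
Your construction is genuinely different from the paper's, but it does not produce a process satisfying property $(3)$, so this is not a presentational gap in the verification but a defect of the object itself. The problem is precisely at the point you flag as ``genuinely delicate'': the exponential damping $e^{-k\int_0^s a_m}$ is too soft to absorb $a_m^k$ for $k\ge 2$. To see this quantitatively, reduce to the one--dimensional model $z_s:=m-\hat d(\gamma(s))$; on $\overline{D_m}$ the diffusion coefficient $|\nabla\hat d|^2$ is pinched between positive constants and the drift is bounded, so near $\partial D_m$ one is looking at $\Ee_{x}\bigl[\int_0^{1\wedge\tau}z_s^{-2k}e^{-k\int_0^s z_r^{-2}\,dr}\,ds\bigr]$ for a nondegenerate one--dimensional diffusion hitting $0$. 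By Fubini this is $\int_0^1\!\!\int q_k(s,x,z)\,z^{-2k}\,dz\,ds$ where $q_k$ is the Feynman--Kac kernel of $\tfrac12\partial_z^2-kz^{-2}$ with Dirichlet boundary condition. That kernel vanishes at $z=0$ at the rate $z^{\alpha_+}$ with $\alpha_+(\alpha_+-1)=2k$, i.e.\ $\alpha_+=\frac{1+\sqrt{1+8k}}{2}$, and the $z$--integral converges only when $\alpha_+>2k-1$: this holds at $k=1$ (where $\alpha_+=2$ and the estimate is sharp) and fails already at $k=2$, where $\alpha_+\approx 2.56<3$. Equivalently, under the Dambis--Dubins--Schwarz time change $u=\int_0^s z_r^{-2}\,dr$ one has $z_{s(u)}=z_0\exp(\tilde W_u-u/2)$ for a new Brownian motion $\tilde W$, so that $a_m^{k-1}e^{-ku}\sim z_0^{-2(k-1)}\exp\bigl(-2(k-1)\tilde W_u+(2(k-1)^2-1)u\bigr)$ after taking expectations; the exponent $2(k-1)^2-1$ is negative only at $k=1$. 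The heuristic ``$a_m\sim e^u$'' describes the median behaviour but ignores the Brownian fluctuation $e^{-2(k-1)\tilde W_u}$, whose $L^1$--norm grows like $e^{2(k-1)^2 u}$ and overwhelms the damping. Neither the Dirichlet boundary decay $p_{D_m}(s,x,z)\lesssim\operatorname{dist}(z,\partial D_m)$ nor contractivity of the Feynman--Kac semigroup helps here, since they do not improve the $z^{\alpha_+}$ boundary rate.

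By contrast, the paper avoids this obstruction by building the truncation \emph{inside} the time clock rather than through soft exponential decay: $l_m(t,\gamma)=\phi\bigl(\int_0^t\phi(T_m(s)-2)f_m^{-2}(\gamma(s))\,ds\bigr)$ with $T_m(s)=\int_0^s f_m^{-2}(\gamma(r))\,dr$. The crucial structural feature is that $l_m'$ carries the hard cut-off $\1_{\{T_m(t)\le 4\}}$, so after the time change $r=T_m(s)$ the integral $\int_0^1|l_m'|^k\,ds$ is dominated by $\int_0^4 f_m^{-2k+2}(\gamma(A_m(r)))\,dr$, an integral over a \emph{bounded} interval. It\^o's formula in the slowed clock, $\frac{\d}{\d r}\Ee\bigl[f_m^{-j}(X_{A_m(r)})\bigr]=\tfrac12\Ee\bigl[(f_m^2\Delta f_m^{-j})(X_{A_m(r)})\bigr]$, exploits the exact cancellation $f_m^2\Delta(f_m^{-j})\le C(m,j)f_m^{-j}$ so that Gronwall gives $\Ee[f_m^{-j}(X_{A_m(r)})]\le Ce^{Cr}$ for all $j$, uniformly over $x\in D_{m-1}$. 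This is why all moments close; the bound depends only on $\|\phi'\|_\infty$, $\|\phi''\|_\infty$ and $\sup_{D_m}|\Delta\hat d|$, and no heat kernel boundary estimate is needed. If you want to salvage your approach, replace the exponential by a compactly supported cutoff of $\int_0^t a_m$ and take $a_m=f_m^{-2}$, but then you have essentially rediscovered the paper's formula and the Gronwall mechanism is the step that makes it work.
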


\begin{proof}
In the proof, the constant $C$ (which may depend on $m$) will change in different lines.
The main idea of the proof is inspired by the article of Thalmaier \cite{T} and Thalmaier and Wang \cite{TW}.

$(1)$ Since for any $m\geq1$, $D_m\subset D_{m+1}\uparrow M$, there exists a $m_0\in \mathbb{N}$ such that

$$~~~~~~~~\left\{ \begin{array}{ll} x\in D_m, \qquad &\text{when}~m\geq m_0\\
x\notin D_m, &\text{when}~1\leq m<m_0\end{array}\right..$$
When $x\notin D_m$, let $l_m(t,\gamma)\equiv0$.
In the following, we will consider the case of $x\in D_m$ (which implies that $\tau_m(\gamma)>0$) without loss of generality.
Let $f_m:M \rightarrow \R_+$ be the function given by \eqref{eq2.2}, we define a sequence of functions:
$$T_m(t,\gamma):=\left\{ \begin{aligned}\int^t_0\f {\d s}{ \left[ f_m\left(\gamma(s)\right)\right]^2},\quad\quad\quad\quad t< \tau_m(\gamma)\\
\infty, \quad\quad\quad\quad \quad\quad\quad t\ge\tau_m(\gamma).\end{aligned}\right. $$
Then each $T_m(\cdot, \gamma)$ is an increasing right continuous function of $t$.
For any $t\ge 0$, set
$$A_m(t,\gamma):=\inf\left\{s \ge 0:\ T_m(s,\gamma)\ge t\right\}.$$
We may omit the parameter $\gamma$ in the notation of $T_m(t,\gamma)$, $A_m(t,\gamma)$ for simplicity in the proof.

Since $\inf_{s\in [0,t]}f_m(\gamma(t))>0$ for $t<\tau_m(\gamma)$, then $T_m(t)<\infty$ for every
$t<\tau_m$ and $T_m(\cdot)$ is strictly increasing and continuous in $[0,\tau_m)$ (with respect to the variable $t$). Therefore $A_m(\cdot)$ is continuous on $[0,T_m(\tau_m))$ and $T_m(A_m(t))=t$ for every $0\le \tau_m<T_m(\tau_m)$.
Furthermore we have $T_m(\tau_m)=\infty$. To see this we only need to observe that
\begin{align*}
f_m(\gamma(s))&=f_m(\gamma(s))-f_m\left(\gamma(\tau_m)\right)
\le \frac{1}{2}\sup_{x\in D_m}|\nabla^2 f_m(x)|d\left(\gamma(s),\gamma\left(\tau_m\right)\right)^2\\
&\le C_m(\gamma)\sqrt{|s-\tau_m|},\qquad  \forall\ s<\tau_m,
\end{align*}
where $C_m(\gamma)$ is a constant,  and we applied the property that
$d\left(\gamma(s),\gamma\left(\tau_m\right)\right)\le C_m(\gamma)|s-\tau_m|^{1/4}$ which is easy to
prove by the Kolmogorov criterion. Combing the fact $T_m(\tau_m)=\infty$ with $T_m(t)<\infty$ for all
$0\le t<\tau_m$ immediately yields that $A_m(T_m(t))=t$ for every $0\le t\le \tau_m$ and $\tau_m>A_m(t)$ for every
$0\le t<\infty$.

Next,
we use the truncation function $\phi:\R \rightarrow \R$ in \eqref{eq2.1} to define
\begin{equation}\label{eq2.5}
l_m(t,\gamma)=\phi\left(\int_0^{t}   \f {\phi\big(T_m(s)-2\big)}{f_m^{2}\left(\gamma(s)\right)}\d s\right),
\end{equation}
which is clearly adapted to the canonical filtration.
Suppose that $t\geq \tau_m>A_m(3)$, then
\begin{equation*}
\begin{split}
\int_0^t \f {\phi\big(T_m(s)-2\big)}{f_m^{2}\left(\gamma(s)\right)}\d s
&\geq \int_0^t \1_{\{T_m(s)\le 3\}}f_m^{-2}\left(\gamma(s)\right)\d s\\
& =\int_0^t \1_{\{s\le A_m(3)\}}f_m^{-2}\left(\gamma(s)\right)\d s\\
&=T_m(A_m(3))=3,
\end{split}
\end{equation*}
which implies $l_m(t,\gamma)=0$ for $t \geq \tau_m$ by the definition of $\phi$.

If $s\le \tau_{m-1}(\gamma)$ then $f_m(\gamma(s))=1$ and so $T_m(s)=s$. Consequently,
$\phi\big(T_m(s)-2\big)=1$
for every $s\le \tau_{m-1}\wedge 1$. Hence we obtain
$$l_m(t,\gamma)=\phi( t\wedge 1)=1,\qquad \ \forall\;  t\le \tau_{m-1}\wedge 1, $$
concluding the proof of part (1).

$(2)$
Still by the expression of \eqref{eq2.5} we know the conclusion of part (2) holds.

$(3)$ Now it only remains  to verify the estimates \eqref{eq2.4}. Firstly,
\begin{equation*}
\begin{split}
|l_m'(t)|&=\left|\phi'\left(\int_0^t \f {\phi\big(T_m(s)-2\big)}
{f_m^{2}\left(\gamma(s)\right)}\d s\right)\right|
\f {\phi\big(T_m(t)-2\big)}
{f_m^{2}\left(\gamma(t)\right)}\\
&\leq \|\phi'\|_\infty f_m^{-2}\left(\gamma(t)\right)\1_{\{\phi(T_m(t)-2) \neq 0\}}\leq C  f_m^{-2}\left(\gamma(t)\right)\1_{\{T_m(t)\leq 4\}}.
\end{split}
\end{equation*}
Then for every $k \in \N $,
\begin{equation}\label{p3-1-2a}
\begin{split}
\int_0^1|l_m'(s)|^k\,\d s&\leq
C\int_0^1 f_m^{-2k}\left(\gamma(s)\right)\1_{\{T_m(s)\leq 4\}}\d s\\
&\leq C\int^{1}_0 f_m^{-2k+2}\bigl(\gamma(s) \bigl)
\1_{\{s\leq A_m(4)\}}\d T_m(s)\\
&=C\int^{4\wedge T_m(1)}_0 f_m^{-2k+2}\left(\gamma\left(A_m(r)\right)\right) \d r\\
&\le C\int^{4}_0 f_m^{-2k+2}\left(\gamma\left(A_m(r)\right)\right) \d r.
\end{split}
\end{equation}
Observe that the distribution of $X_{\cdot}^x$ under $\Pp$ is the same as that of $\gamma(\cdot)$ under $\mu_x$,
\begin{equation}\label{eq2.6}
\begin{split}
&\sup_{x \in D_{m-1}}\int_{P_x(\bar M)}\int^{4}_0 f_m^{-2k+2}\left(\gamma\left(A_m(s)\right)\right) \d s
\mu_x(\d \gamma)\\
&=\sup_{x \in D_{m-1}}\Ee\left[\int_0^{4} f_m^{-2k+2}\left(X_{A_m(s,X_{\cdot}^x)}^x\right)\d s\right].
\end{split}
\end{equation}
Let $S_{j,m}(\gamma):=\inf\left\{t>0; f_m\left(\gamma(t)\right)\le \frac{1}{j}\right\}$. According to It\^o's formula we obtain for all $j,k \in \mathbb{N}$ and $x\in D_{m-1}$,
\begin{equation}\label{p3-1-2}
\begin{split}
\EE \left[f_m^{-k} \left(X_{A_m(t)\wedge S_{j,m}} \right)\right]
&=f_m^{-k}(x) +\frac{1}{2}
\EE \left[\int_0^{A_m(t)\wedge S_{j,m}} \Delta \left(f_m^{-k}\right) (X_s) \d s\right]\\
&=1 +
\frac{1}{2}\EE\left[\int_0^{A_m(t)\wedge S_{j,m}} \left(f_m^2 \Delta \left(f_m^{-k}\right)\right) \left(X_{A_m(T_m(s))}\right) \d T_m(s)\right],
\end{split}
\end{equation}
we have applied the fact
that ${A_m\left(T_m(s)\right)=s}$ for every $0\le s<S_{j,m}$ and $f_m(x)=1$ for all $x\in D_{m-1}$.
Meanwhile we have
\begin{equation*}
\begin{split}
f_m^2\Delta(f_m^{-k})&=k(k+1)f_m^{-k}|\nabla f_m|^2-k f_m^{-k+1}\Delta f_m\\
&=k(k+1)f_m^{-k}\left|\phi'\left(\hat d-m+2\right)\right|^2
\left|\nabla \hat d\right|^2\\
&-k f_m^{-k}\left(f_m\phi''\left(\hat d-m+2\right)\left|\nabla \hat d\right|^2+\phi'
\left(\hat d-m+2\right)f_m\Delta \hat d\right)\\
&\leq k(k+1)f_m^{-k}\left(\|\phi'\|_{\infty}+\|\phi''\|_{\infty}+
\|\phi'\|_{\infty}\sup_{z \in D_m}|\Delta \hat d(z)|\right)\\
&\le Cf_m^{-k}.
\end{split}
\end{equation*}
Putting this into \eqref{p3-1-2} we arrive at
\begin{align*}
\EE \left[f_m^{-k} \left(X_{A_m(t)\wedge S_{j,m}} \right)\right]&
\le 1 +C\EE\left[\int_0^{A_m(t)\wedge S_{j,m}}f_m^{-k}\bigl(X_{A_m(T_m(s))}\bigr)\d T_m(s)\right]\\
&\le 1+C\int_0^t \EE\left[f_m^{-k}\bigl(X_{A_m(r)\wedge S_{j,m}}\bigr)\right]\d r,
\end{align*}
where the last step follows from the procedure of change of variable $u=T_m(s)$ and the fact $A_m(t)\le t$.

Hence by Grownwall's  inequality we arrive at for all $k,j\in \mathbb{N}$,
\begin{align*}
\EE \left[f_m^{-k} \bigl(X_{A_m(t)\wedge S_{j,m}} \bigr)\right]\le Ce^{Ct}.
\end{align*}
Then letting $j \rightarrow \infty$ and observing that $A_m(t)\le \tau_m=\lim_{j \rightarrow \infty}S_{j,m}$ we obtain for
all $k\in \mathbb{N}$,
\begin{align*}
\EE \left[f_m^{-k} \left(X_{A_m(t)} \right)\right]\le Ce^{Ct},
\end{align*}
combing this with \eqref{p3-1-2a}  yields \eqref{eq2.4}. This completes the proof for Lemma \ref{lem5.1}.
\end{proof}

\section{Proof of the Main Estimates}\label{section5}
In this section, we shall apply the cut-off
procedures, using the quantitative localised vector fields introduced in Section \ref{cut-off}, to obtain short time as well as asymptotic
first and second order gradient estimates for the logarithmic heat kernel of a complete Riemannian manifold without imposing on it any curvature bounds.

Let $\{D_m\}_{m=1}^{\infty}$ and $\{f_m\}_{m=1}^{\infty}$ be the sequences of domains and functions constructed in Section
\ref{cut-off}. Recall that for every $m$, $D_m=\{x \in M: f_m(x)>0\}$ is a bounded connected open set.
By Lemma \ref{lem7.1} from the Appendix there exists a compact Riemannian manifold $\tilde M_m$ such that $D_m$ is isometrically embedded into
$\tilde M_m$ as an open set. We could and will view $D_m \subset \tilde M_m$ as an open subset of $\tilde M_m$.
In particular, we have
\begin{equation}\label{eq3.5}
d_{\tilde M_m}(x,y)=d(x,y),\quad \quad  \forall\ x,y \in B_{o}(2m-2),
\end{equation}
where $d$ and $d_{\tilde M_m}$ are the Riemannian distance function on $M$ and $\tilde M_m$.
We denote the heat kernel on $M$ and $\tilde M_m$ by $p(t,x,y)$ and $p_{\tilde M_m}(t,x,y)$ respectively.
For every $e\in \R^n$ we also let $H_{e}^m$ denote the horizontal lift of $ue$ on $TO(\tilde M_m)$.

Let us fix a probability space $(\Omega, \mathscr{F},\Pp)$. Let  $\{B_t\}_{t\ge 0}$ be the standard $\R^n$-valued Brownian motion with
$B_t=(B_t^1, \dots, B_t^n)$, and we denote by $\mathscr{F}_t$ the filtration generated by it. Now we fix an orthonormal basis
$\{e_i\}_{i=1}^n$  of $\R^n$.

For  $x\in D_{m}\subset \tilde M_m$ and $U_0$ a frame  at $x$ so that $U_0 \in O_xM=O_x\tilde M_m$,
 let $U_t^m$ denote the solution to the following
$O(\tilde M_m)$-valued stochastic differential equation
\begin{equation}\label{e4-1}
\d U_t^m=\sum_{i=1}^n H_{e_i}^m\left(U_t^m\right)\circ \d B_t^i,\quad \ U_0^m=U_0.
\end{equation}

 Set $X_t^{m,x}:=\pi(U_t^m)$. This is a $\tilde M_m$-valued Brownian motion. Recall that $X_t^x:=\pi(U_t)$, where $U_t$ is the solution to \eqref{sde1},
 with the same driving Brownian motion $B_t$ and the same initial value $U_0$ as in \eqref{e4-1}.

 Throughout this section,  for every $m,k\in \mathbb{N}$ with $k\ge m$, we define
$$\tau_m:=\inf\{t>0; X_t^x\notin D_m\},\quad  \quad \tau_{m}^k:=\inf\{t>0; X_t^{k,x}\notin D_m\}.$$
Note that for every $k>m$, $H_{e_i}^k=H_{e_i}$ on $\pi^{-1}(D_m)$.   It is easy to verify that 
\begin{equation}\label{e4-2}
\begin{aligned}\tau_m=& \tau_{m}^k, \quad
   X_t^x=& X_t^{k,x}, \qquad \forall\ k\ge m>1,\ 0\le t\le \tau_m.
   \end{aligned}
\end{equation}
As before, the superscript $x$ may be omitted from time to time when there is no risk of confusion. \emph{The probability and the expectation
for the functional generated by $X_{\cdot}^x$ or $X_{\cdot}^{m,x}$ (with respect to $\Pp$) are  denoted by $\mathbb{P}_x$ and
$\Ee_x$ respectively in this section.}

\emph{If $M$ is compact, then when $m$ is large enough we have $D_m=M$ and we can take $\tilde M_m=M$
(we do not have to apply Lemma \ref{lem7.1} when $M$ is compact), then all the conclusions in this section
automatically hold. Hence
in this section we always assume that $M$ is non-compact.}

\medskip

We shall use the following estimates which are crucial for our proof.

\begin{lem}[\cite{Azencott, Molchanov, Va1,Va2}]\label{lem6.1}
For any $x, y \in M$,
\begin{equation}\label{eq3.1}
\lim_{t \downarrow 0}t\log p(t,x,y)=-\frac{d(x,y)^2}{2}.
\end{equation}
and the convergence is uniformly in $(x,y)$ on $ K\times K$ for any compact subset $K$.

Moreover, for every connected bounded open set $D\supseteq K$ with smooth boundary,
\begin{equation}\label{eq3.2}
\lim_{t \downarrow 0}t\log \Pp_x\left(\tau_D<t\right)=-\frac{d(x,\partial D)^2}{2},\quad \ \forall\  x\in K.
\end{equation}
Here  $\tau_D:=\inf\{t>0; X_t \notin D\}$ is the first exit time from $D$ and $d(x,\partial D):=\inf_{z \in \partial D}d(x,z)$.
And the convergence is also uniform in $x$ on $K$.
\end{lem}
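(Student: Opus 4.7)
The plan is to deduce both estimates from the small-time large deviation principle (LDP) for Brownian motion on $M$. Since these are classical results (Azencott, Molchanov, Varadhan), the real task is to verify that the convergence is uniform on compact sets, in a setting where no curvature bounds are assumed globally.

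First I would fix a compact $K\subset M$ and a precompact open neighborhood $V\supset K$. On $V$ the geometry is automatically bounded, which permits the construction of a Minakshisundaram--Pleijel type parametrix
\[
H_N(t,x,y)=(2\pi t)^{-n/2}e^{-d(x,y)^2/(2t)}\sum_{k=0}^{N} t^k \Phi_k(x,y)
\]
valid in a neighborhood of the diagonal of $V\times V$, with $\Phi_0>0$ and with error of order $t^{N+1-n/2}e^{-d(x,y)^2/(2t)}$ uniformly on $K\times K$. To pass from this local parametrix to the true heat kernel one needs to show that the contribution to $p(t,x,y)$ from paths leaving $V$ is negligible on the exponential scale $1/t$. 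This can be done either through Davies' Gaussian upper bound, or through the finite-propagation-speed argument for the associated wave equation; both arguments use only local ellipticity on $V$. From the parametrix, \eqref{eq3.1} and its uniformity on $K\times K$ are immediate by taking $t\log$.

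For \eqref{eq3.2} the key input is the pathspace LDP: uniformly in $x\in K$, the scaled laws $\mathbb{P}_x\circ(X_{t\cdot})^{-1}$ on $C([0,1];M)$ satisfy a large deviation principle with speed $1/t$ and rate function $I_x(\gamma)=\tfrac12\int_0^1 |\dot\gamma(s)|_g^2\,\mathrm{d}s$ for absolutely continuous $\gamma$ with $\gamma(0)=x$. One proves this by pulling $X$ back into normal coordinates on $V$ and invoking Schilder's theorem, together with a Girsanov-type comparison that absorbs the metric distortion; once again only the local geometry of $V$ matters. Since $\{\tau_D<t\}$ corresponds after the rescaling $s\mapsto s/t$ to the set of paths that leave $D$ within the unit interval, Varadhan's contraction principle yields
\[
\lim_{t\downarrow 0} t\log \mathbb{P}_x(\tau_D<t)=-\inf\bigl\{I_x(\gamma):\gamma(0)=x,\ \gamma(s)\in D^c\text{ for some } s\in[0,1]\bigr\},
\]
and this infimum equals $d(x,\partial D)^2/2$, attained along a reparametrized minimizing geodesic from $x$ to $\partial D$. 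Uniformity on $K$ follows from the uniform LDP combined with the continuity of $x\mapsto d(x,\partial D)$.

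The main obstacle is the lack of global curvature bounds, which blocks a direct appeal to global Gaussian bounds or global LDPs. My workaround is complete localization: embed $K$ inside a precompact $V$ where bounded geometry is automatic, carry out the parametrix construction and the pathspace LDP there, and control the probability that $X$ exits $V$ before time $t$ by applying \eqref{eq3.2} itself on a larger auxiliary precompact domain (or by Davies' comparison). Making this bootstrap genuinely uniform on $K$ is the only delicate point; once this is in place, Varadhan's lemma together with the parametrix expansion delivers both statements with the required uniformity.
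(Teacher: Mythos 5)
The paper does not prove this lemma: it is stated with the citations \cite{Azencott, Molchanov, Va1, Va2}, and the text following it explicitly defers to Varadhan and Molchanov. So you are not competing against an argument in the paper; you are sketching a proof of a classical input that the paper takes as a black box. With that understood, your route (localize to a precompact $V$, use a Minakshisundaram--Pleijel parametrix for \eqref{eq3.1}, and a small-time pathspace LDP for \eqref{eq3.2}) is indeed the standard one and correctly isolates the key point, namely that both statements are purely local, so no global curvature hypothesis is needed.

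Two steps in your sketch, however, have genuine gaps. First, the parametrix
$H_N(t,x,y)=(2\pi t)^{-n/2}e^{-d(x,y)^2/(2t)}\sum_{k\le N}t^k\Phi_k(x,y)$
is valid only for $(x,y)$ within the injectivity radius of the diagonal of $V$, because $\Phi_k$ is built from Jacobian determinants of $\exp_x$ along the unique minimizing geodesic. A compact set $K$ may have diameter far exceeding this, so the claim that ``\eqref{eq3.1} and its uniformity on $K\times K$ are immediate'' does not follow. The lower bound for well-separated $x,y\in K$ requires a Chapman--Kolmogorov chaining along a near-minimizing geodesic (breaking $[0,t]$ into $N$ pieces, using the near-diagonal parametrix on each piece, and optimizing over $N$), while the matching upper bound uses the Dirichlet-kernel comparison you already invoke together with a Gaussian bound for $p^V$. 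Without this chaining step the argument only covers a neighborhood of the diagonal. Second, controlling the leakage $\Ee_x[p(t-\tau_V,X_{\tau_V},y)\1_{\{\tau_V<t\}}]$ by ``applying \eqref{eq3.2} itself on a larger auxiliary precompact domain'' is an infinite regress: \eqref{eq3.2} on a larger domain would require the same leakage control one level up. What is actually needed here is much weaker than \eqref{eq3.2} --- only a crude bound $\Pp_x(\tau_V<t)\le Ce^{-c/t}$ for some $c>0$ uniformly in $x\in K$ --- and this follows from an elementary It\^o comparison: compare the radial process $d(x_0,X_s)$ (run before $\tau_V$) with a one-dimensional drifted Brownian motion, using only $\sup_V |\Delta d|<\infty$ on the precompact $V$. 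Your parenthetical ``or by Davies' comparison'' points in the right direction, but the regress in the primary suggestion should be eliminated. Once these two repairs are made, the rest of the sketch (uniform LDP in the starting point, contraction to the exit functional, regularity of the exit event because $\partial D$ is smooth, continuity of $x\mapsto d(x,\partial D)$) is sound.

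One remark worth adding: given the machinery the paper already develops, there is a slicker route consistent with its overall architecture. The quantity $\Pp_x(\tau_D<t)$ depends only on the law of the Brownian motion up to $\tau_D\subset D_m$, so by \eqref{e4-2} it coincides with the corresponding quantity on the compact manifold $\tilde M_m$ built in Lemma \ref{lem7.1}; hence \eqref{eq3.2} on $M$ is inherited verbatim from the compact case, where it is classical. With \eqref{eq3.2} in hand, the heat-kernel comparison of Lemma \ref{lem6.3} transfers \eqref{eq3.1} from $\tilde M_m$ to $M$ on $K\times K$ up to an $e^{-L/t}$ error. This avoids re-deriving the parametrix and LDP from scratch, at the cost of using the embedding lemma, and is closer in spirit to how the paper uses Lemma \ref{lem6.1} downstream.
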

The asymptotic estimates \eqref{eq3.1} and \eqref{eq3.2} were firstly shown to hold for $\R^n$ in  Varadhan
 \cite{Va1,Va2}, extension to a complete Riemannian manifold was given in
 Molchanov \cite{Molchanov}.
In addition,
Azencott \cite{Azencott-as} and  \cite{Hsu90} indicated that these statements may fail for an
 incomplete Riemannian manifold.  
 We shall also use the following statement, which follows readily  from the small time asymptotics and the Gaussian heat kernel upper bounds.
\begin{lem}( \cite{Azencott-as}, \cite[Lemma 2.2]{Hsu90})\label{lem6.2}
For any compact subset  $K$ of $M$ and any positive number~$r$,
Then there exists a positive number $t_0$ such that
\begin{equation}\label{eq3.3}
\sup_{t\in (0,t_0]} \sup_{ d(z,y) \ge r, \;y \in K}p(t,z,y)\le 1.
\end{equation}
\end{lem}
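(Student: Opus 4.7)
The plan is to split the supremum according to whether $z$ lies in a fixed compact neighborhood of the target set $K$ or escapes from it, and to apply distinct tools in the two regions. Fix $R>r$ and set $\widetilde K := \{z\in M : d(z,K)\le R\}$. Since $M$ is complete, $\widetilde K$ is closed and bounded, hence compact by Hopf--Rinow, so I can write
\[
\sup_{y\in K,\, d(z,y)\ge r} p(t,z,y) \;\le\; \max\bigl\{A(t),\, B(t)\bigr\},
\]
where $A(t):=\sup\{p(t,z,y):z\in\widetilde K,\,y\in K,\,d(z,y)\ge r\}$ and $B(t):=\sup\{p(t,z,y):z\notin \widetilde K,\,y\in K\}$.

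For the near region, I would apply the uniform small-time asymptotic in Lemma~\ref{lem6.1} to the compact set $\widetilde K\cup K$: as $t\downarrow 0$, $t\log p(t,z,y) \to -d(z,y)^2/2$ uniformly in $(z,y)\in\widetilde K\times K$. Combined with the hypothesis $d(z,y)\ge r$, this yields some $t_1>0$ such that $A(t)\le \exp(-r^2/(4t))\le 1$ for all $t\in(0,t_1]$.

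For the far region, every $z\notin\widetilde K$ satisfies $d(z,y)\ge R$ for each $y\in K$. Here the goal is to show $B(t)\to 0$ as $t\downarrow 0$, for which I would invoke a Gaussian upper bound of the schematic form $p(t,z,y)\le C\, t^{-n/2}\exp(-c\,d(z,y)^2/t)$ valid on any complete Riemannian manifold with no curvature restrictions, as developed by Azencott~\cite{Azencott-as} (see also Hsu~\cite{Hsu90}). Together with $d(z,y)\ge R$ this gives $B(t)\le C\,t^{-n/2}e^{-cR^2/t}$, which vanishes as $t\downarrow 0$; picking $t_2>0$ so that $B(t)\le 1$ on $(0,t_2]$ and taking $t_0 := t_1\wedge t_2$ completes the proof. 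The principal obstacle is precisely the far region: Lemma~\ref{lem6.1} supplies only a compact-set-uniform statement, so bridging the gap to all $z$ at distance at least $R$ requires a genuine pointwise Gaussian upper bound, not merely a logarithmic asymptotic. Extracting such a bound in the absence of curvature restrictions is the substantive content cited to Azencott and Hsu; naive alternatives like the Cauchy--Schwarz inequality $p(t,z,y)\le \sqrt{p(t,z,z)\,p(t,y,y)}$ lack decay in $d(z,y)$ and cannot be substituted.
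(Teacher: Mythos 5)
Your proposal follows essentially the same route the paper itself sketches: the authors do not prove Lemma~\ref{lem6.2}, but cite it to Azencott and Hsu and summarize the argument in one sentence as ``follows readily from the small time asymptotics and the Gaussian heat kernel upper bounds,'' which is precisely your two-region decomposition (Varadhan/Lemma~\ref{lem6.1} on a compact neighbourhood, a Gaussian upper bound outside). You have also correctly identified the genuine technical content: the off-diagonal Gaussian upper bound with a constant depending only on the compact set $K$ (not on the far variable $z$) and with no curvature hypotheses is exactly what the citations to Azencott and Hsu are meant to supply, and it cannot be replaced by a Cauchy--Schwarz reduction to the diagonal. One minor remark: once that Gaussian upper bound is granted in the uniform form you write, the near-region/Varadhan step becomes logically redundant, since $d(z,y)\ge r$ already forces $C\,t^{-n/2}e^{-cr^2/t}\le 1$ for small $t$; keeping the split is still sensible because it lets you get away with a weaker far-field Gaussian estimate, valid only for $d(z,y)$ large, and matches the paper's stated pair of ingredients.
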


\subsection{Comparison theorem for functional integrals involving approximate heat kernels}
Let $D_m$ dentoe the relatively compact subset and let $l_m:[0,1]\times P_x(M)\rightarrow \R$ be the cut-off processes
adapted to $D_m$, as constructed by Lemma \ref{lem5.1}. Let $p^{D_m}(t,x,y)$ denote the Dirichlet heat kernel on $D_m$.
Let $K$ be a compact set and $x,y\in K$ be such that $d(x,y)<d(x, \partial D^m)\vee d(y, \partial D^m)$, then $p(t,x,y)$ and $p^{D_m}(t,x,y)$ are asymptotically the same for small $t$. See \cite{Azencott-as}, Lemma 2.3 on page 156.

Below we give a quantitative estimate on $p$ and $p^{D_m}$ on a compact set $K\times K$, for sufficiently large $m$. By sufficiently large, we mean that  $m\ge m_0$ for
a natural number $m_0$ and $m_0$ may depend on other data. In all the results below, it depends on the compact set $K$ and the prescribed exponential factor $L>0$.

\begin{lem}\label{lem6.3}
Suppose that $K$ is a compact subset of $M$ and $L>1$ is a positive number.  Then for sufficiently large $m$, there exists a positive number
$t_0=t_0(K,L,m)$ such that  for every $t\in (0,t_0]$,
\begin{equation}\label{eq3.6}
\begin{aligned}
\sup_{x,y\in K}\left|p(t,x,y)-p^{D_{m}}(t,x,y)\right| &\le  \, e^{-\frac{2L}{t}}, \\\sup_{x,y\in K} \left|p_{\tilde M_{m}}(t,x,y)-p^{D_{m}}(t,x,y)\right| &\le e^{-\frac{2L}{t}}.
\end{aligned}
\end{equation}
 In particular, for every $t\in (0,t_0]$,   \begin{equation}\label{eq3.7a}
 \sup_{x,y\in K}\left|p(t,x,y)-p_{\tilde M_{m}}(t,x,y)\right| \leq e^{-\frac{L}{t}}.
 \end{equation}
\end{lem}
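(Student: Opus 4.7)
The starting point is the classical Dirichlet decomposition via the strong Markov property applied at the first exit $\tau_m$ from $D_m$: for $x, y \in D_m$,
\begin{equation*}
p(t,x,y) - p^{D_m}(t,x,y) = \mathbb{E}_x\bigl[\mathbf{1}_{\{\tau_m < t\}}\, p(t - \tau_m,\, X_{\tau_m}^x,\, y)\bigr].
\end{equation*}
Because the horizontal SDEs \eqref{sde1} and \eqref{e4-1} agree on $\pi^{-1}(D_m)$, the trajectories $X_\cdot^x$ and $X_\cdot^{m,x}$ coincide up to $\tau_m$ by \eqref{e4-2}; in particular the exit times and exit positions agree almost surely, and exactly the same decomposition holds on $\tilde M_m$,
\begin{equation*}
p_{\tilde M_m}(t,x,y) - p^{D_m}(t,x,y) = \mathbb{E}_x\bigl[\mathbf{1}_{\{\tau_m < t\}}\, p_{\tilde M_m}(t - \tau_m,\, X_{\tau_m}^{m,x},\, y)\bigr].
\end{equation*}
This reduces both estimates in \eqref{eq3.6} to controlling an exit probability multiplied by a heat kernel evaluated on the boundary.

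The plan is then to handle the two factors separately, uniformly in $x,y\in K$. For the heat kernel factor, fix $m$ so large that $K\subset D_{m-1}$, and set $r_m:=d(K,\partial D_m)>0$; since $X_{\tau_m}$ lies on $\partial D_m$, one has $d(X_{\tau_m},y)\ge r_m$ for every $y\in K$, so Lemma~\ref{lem6.2} applied to $K$ with parameter $r_m$ yields $t_1=t_1(K,m)>0$ with $p(s,z,y)\le 1$ whenever $s\le t_1$, $y\in K$ and $d(z,y)\ge r_m$. The analogous bound for $p_{\tilde M_m}$ is immediate from the Gaussian upper bound on the compact manifold $\tilde M_m$. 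For the exit probability, Lemma~\ref{lem6.1} gives the uniform small-time asymptotic
\begin{equation*}
\lim_{t\downarrow 0} t\log \mathbb{P}_x(\tau_m < t) = -\frac{d(x,\partial D_m)^2}{2},\qquad x\in K.
\end{equation*}

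To finish, use that $D_m\supset B_o(2m-2)$ to get $\inf_{x\in K} d(x,\partial D_m)\to\infty$ as $m\to\infty$, and choose $m_0=m_0(K,L)$ so that $\inf_{x\in K} d(x,\partial D_m)^2/2\ge 3L$ for every $m\ge m_0$. The asymptotic above then provides $t_2=t_2(K,L,m)>0$ with $\mathbb{P}_x(\tau_m<t)\le e^{-5L/(2t)}\le e^{-2L/t}$ for all $t\le t_2$ and $x\in K$, which combined with the boundary bound yields \eqref{eq3.6} with $t_0:=t_1\wedge t_2$; shrinking $t_0$ further to absorb a factor of $2$ gives \eqref{eq3.7a} by the triangle inequality. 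The only subtle point is transferring the Varadhan exit asymptotic and the distance $d(x,\partial D_m)$ between $M$ and $\tilde M_m$, but this is essentially automatic: \eqref{e4-2} identifies the Brownian motions up to $\tau_m$, and any minimising geodesic from $x\in D_{m-1}$ to $\partial D_m$ can be taken inside $\overline{D_m}$, where the two Riemannian metrics coincide by construction of the embedding.
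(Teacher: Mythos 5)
Your proposal is correct and follows essentially the same route as the paper: the strong Markov decomposition of $p-p^{D_m}$ (and its analogue for $p_{\tilde M_m}$), the small-time uniform heat kernel bound of Lemma~\ref{lem6.2} for the factor evaluated at $\partial D_m$, the Varadhan exit-time asymptotic of Lemma~\ref{lem6.1} for $\mathbb{P}_x(\tau_m<t)$ after choosing $m_0$ so that $d(K,\partial D_m)$ is large compared with $L$, and finally the triangle inequality for \eqref{eq3.7a}. Your observation that \eqref{e4-2} makes the exit times coincide, so that the Varadhan rate transfers automatically, is a slightly cleaner way to handle the $\tilde M_m$ side than the paper's comparison through $\partial B_o(2m-2)$ and \eqref{eq3.5}, though both come to the same thing.
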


\begin{proof}
The estimates in \eqref{eq3.6} could be found in Azencott  \cite[section 4.2]{Azencott-as}, \and also in Bismut \cite[Section III.a]{Bis} and  Hsu \cite[The proof of Theorem 5.1.1]{Hsu2}. Here
we  include a proof for the convenience of the reader. The technique and the intermediate estimates will be used  later.

By the strong Markovian property,
$$P_tf(x)=\Ee_x \left[f\left(X_t\right)\1_{\{t\le \tau_{m}\}}\right]+\Ee_x \left[\Ee_{X_{\tau_m}}
\left[f\left(X_{t-\tau_m}\right)\right]\1_{\{\tau_{m}<t<\zeta\}}\right]$$
 and so for any $x,y \in K$ and $ t>0$,
\begin{equation}\label{eq3.8}
\begin{split}
p(t,x,y)=p^{D_m}(t,x,y)+\Ee_x\left[ p\left(t-\tau_{m},X_{\tau_{m}},y\right)
\1_{\{\tau_{m}<t<\zeta\}}\right].
\end{split}
\end{equation}

Since $M$ is non-compact, given any number  $L>1$, there exists a natural number $m_0$ such that
$$K\subset B_o(2m_0-2), \quad  d(K,\partial D_{m_0})\ge d(K,\partial B_o(2m_0-2))>4L.$$
Then, according to \eqref{eq3.2} and \eqref{eq3.3}, for every $m\ge m_0$, we could find a positive number  $t_0(K,L,m)$ such that for any $t\in (0,t_0]$,
\begin{equation*}
\begin{split}
& \Pp_x\left(t>\tau_{m}\right)\le \exp\left(-\frac{d(x,\partial D_{m})^2-1}{2t}\right)\le
e^{-\frac{2L}{t}}, \qquad \forall x\in K,\\
& p(t,z,y)\le 1, \quad  \hbox{ for  all } \; z\in \partial D_m, \hbox{ and  } \; y\in K.\end{split}
\end{equation*}
By these estimates we obtain that, for all $m\ge m_0$ and  all $t \in (0,t_0]$,
\begin{equation*}
\begin{split}
\Ee_x\left[p\left(t-\tau_{m},X_{\tau_m},y\right)
\1_{\{t> \tau_{D_{m}}\}}\right]
&\le \sup_{t \in (0,t_0)}\sup_{ z\in \partial D_{m}, y \in K}p(t,z,y)\cdot
\Pp_x\left(t>\tau_{m}\right)\\
&\le e^{-\frac{2L}{t}}.
\end{split}
\end{equation*}
Putting this into \eqref{eq3.8} we arrive at that for all $m\ge m_0$, all  $x,y\in K$, and for all $ t \in (0,t_0]$,
\begin{equation}\label{eq3.9}
\begin{split}
& |p(t,x,y)-p^{D_{m}}(t,x,y)|\le e^{-\frac{2L}{t}}.
\end{split}
\end{equation}

Note that for every $m\ge m_0$, $D_{m}\subset \tilde M_{m}$ and $x\in K$,
$$d_{\tilde M_{m}}(x, \partial D_{m})\ge
d_{\tilde M_{m}}(x, \partial B_o(2m-2))
=d(x, \partial B_o(2m-2))$$
 which is due to \eqref{eq3.5}.
By the same argument for \eqref{eq3.9} and changing the constant $t_0$ if necessary we could find a
$t_0(K,L,m)$ such that for all $m\ge m_0$,
\begin{equation*}
\begin{split}
& \left|p_{\tilde M_{m}}(t,x,y)-p^{D_{m}}(t,x,y)\right|\le e^{-\frac{2L}{t}},\quad \ x,y\in K,\ t \in (0,t_0].
\end{split}
\end{equation*}
This, together with \eqref{eq3.9},  yields \eqref{eq3.6} and \eqref{eq3.7a}.
\end{proof}

\begin{lem}\label{lem6.4}
Suppose that $K$ is a compact subset of $M$ and $L>1$ is a positive number.
\begin{itemize}
\item[$(1)$] For $m_0$ sufficiently large and any $m>m_0$,
 there exists a $t_0(K,L,m)$ such that  for every  $0<s\le \frac{t}{2}$ and $0<t\le t_0$, we have
\begin{equation}\label{l4-1-7}
\sup_{x, y\in K}\sup_{z\in D_{m_0}} \left| \f {p(t-s, x,z)}{p(t,x,y)}- \f {p_{\tilde M_m}(t-s, x,z)}{p_{\tilde M_m}(t,x,y)} \right|
\le 2e^{-\frac{4L}{t}}.
 \end{equation}
\item[$(2)$]  Suppose $\Upsilon_t$ is an $\mathscr{F}_t$ adapted process, and for any $q>0$ and $m\geq1$ we set
$$F^q_m(\Upsilon, X_\cdot)=\Bigl( \int_0^s \Upsilon_r l_m'\Bigl(r,X_{\cdot}\Bigr)\d B_r\Bigr)^q,\quad
F_m^q(\Upsilon, X_\cdot^m)= \Bigl(\int_0^s \Upsilon_r l_m'\Bigl(r,X_{\cdot}^m\Bigr)\d B_r\Bigr)^q.$$
We also assume that
\begin{equation}\label{l4-1-0}
\sup_{x\in K}\Ee_x\left[\int_0^{1\wedge \tau_m} |\Upsilon_s|^{2q}\d s\right]<\infty, \quad \forall m\geq1
\end{equation}
for some $q\in \mathbb{N}$.
Then for every sufficiently large $m$ (any  $m$  greater than some number $m_0(K,L)$),  we can find a positive number
$t_0(K,L,m)$
with the property that
\begin{equation}\label{l4-1-1}
\begin{split}
&\sup_{x,y\in K}\Bigg |\Ee_x \Bigl[ F_m^q(\Upsilon, X_\cdot) \; \f {p(t-s, X_s,y)}{p(t,x,y)}\;
\Bigr]
-\EE_x\Bigl[F_m^q(\Upsilon, X_\cdot^m)\;
 \f {p_{\tilde M_m}(t-s, X_s^m,y)}{p_{\tilde M_m}(t,x,y)} \Bigr]\Bigg|\\
& \le  C(m)\;e^{-\frac{L}{t}}
\end{split}
\end{equation}
for any $0<t\le t_0$, $0<s\le \frac{t}{2}$.
Here the positive constant $C(m)$ may depend on $m$ and on
 $\alpha_m:=\sup_{x\in K}\Ee_x\left[\int_0^{1}\left|\Upsilon_r l_m'\left(r,X_{\cdot}\right)\right|^{q} \d r\right]$. (Note that $l_m'\left(r,X_{\cdot}\right)\neq 0$ only for $r<\tau_m=\tau_{m}^m$ so the quantity is well defined.)
\end{itemize}
\end{lem}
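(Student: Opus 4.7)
The plan is to prove the two parts in order. For Part (1) I would reduce the ratio comparison to the absolute heat-kernel comparison of Lemma \ref{lem6.3} by cross-multiplying and using Varadhan's asymptotic as a uniform lower bound. For Part (2) the key observation is that the quantitative cut-off $l_m$ forces the stochastic integral to be the \emph{same} random variable whether computed along $X_\cdot$ or $X^m_\cdot$, after which the remaining ratio difference is split along a buffer exit time $\tau_{m_0}$ where Part (1) applies pathwise.

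For Part (1), writing the difference over a common denominator and adding and subtracting $p(t-s,x,z)\,p(t,x,y)$ in the numerator,
\begin{align*}
&\left|\frac{p(t-s,x,z)}{p(t,x,y)}-\frac{p_{\tilde M_m}(t-s,x,z)}{p_{\tilde M_m}(t,x,y)}\right|\\
&\qquad\leq \frac{p(t-s,x,z)\,|p_{\tilde M_m}(t,x,y)-p(t,x,y)|}{p(t,x,y)\,p_{\tilde M_m}(t,x,y)}+\frac{|p(t-s,x,z)-p_{\tilde M_m}(t-s,x,z)|}{p_{\tilde M_m}(t,x,y)}.
\end{align*}
Lemma \ref{lem6.3} applied on the compact set $K\cup\overline{D_{m_0}}$ with a sufficiently large exponent $L'\gg L$ dominates each heat-kernel difference by $e^{-L'/t}$; Lemma \ref{lem6.1} gives the lower bound $p(t,x,y)\ge e^{-C_K/t}$ uniformly on $K\times K$; and for any $\epsilon>0$ one has the crude bound $p(t-s,x,z)\le e^{\epsilon/t}$ for small $t$ (either from local Gaussian diagonal bounds or from $t\log p\to -d^2/2\le 0$). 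Choosing $L'>2C_K+4L+\epsilon$ and $t$ small produces the desired $2e^{-4L/t}$.

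For Part (2), the first key observation is that $l_m'(r,X_\cdot)=l_m'(r,X^m_\cdot)$ identically in $(r,\omega)$: by \eqref{e4-2} the two paths coincide on $[0,\tau_m]$ and share the exit time $\tau_m$ from $D_m$, while $l_m'$ vanishes after $\tau_m$ by Lemma \ref{lem5.1}(1). Consequently $F_m^q(\Upsilon,X_\cdot)=F_m^q(\Upsilon,X^m_\cdot)=J^q$ with $J:=\int_0^s\Upsilon_r l_m'(r,X_\cdot)\,dB_r$, and \eqref{l4-1-1} reduces to bounding $|\Ee_x[J^q(R_M-R_{\tilde M_m})]|$, where $R_M:=p(t-s,X_s,y)/p(t,x,y)$ and $R_{\tilde M_m}:=p_{\tilde M_m}(t-s,X_s^m,y)/p_{\tilde M_m}(t,x,y)$. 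BDG combined with hypothesis \eqref{l4-1-0} and the moment estimate \eqref{eq2.4} gives $\Ee_x[|J|^{2q}]\le C(m,\alpha_m)$. Fix $m_0=m_0(K,L)<m$ large enough that $K\subset D_{m_0-1}$ and $d(\partial D_{m_0},K)^2>6C_K+24L$, and decompose along $\{s\le\tau_{m_0}\}$ versus $\{s>\tau_{m_0}\}$. On $\{s\le\tau_{m_0}\}$ we have $X_s=X_s^m\in D_{m_0}$, so Part (1) gives $|R_M-R_{\tilde M_m}|\le 2e^{-4L/t}$ pathwise, bounding this contribution by $C(m,\alpha_m)e^{-4L/t}$. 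On $\{s>\tau_{m_0}\}$, Cauchy--Schwarz reduces the estimate to bounding $\Ee_x[\mathbf{1}_{\{\tau_{m_0}<s\}}R_M^2]$; conditioning on $\mathcal F_{\tau_{m_0}}$ and using strong Markov together with the local diagonal bound $\sup_{u\in[t/2,t]}p(u,y,y)\le C_K t^{-n/2}$ yields
\[
\Ee_x[p(t-s,X_s,y)^2\mid\mathcal F_{\tau_{m_0}}]\le C_K t^{-n/2}\,p(t-\tau_{m_0},X_{\tau_{m_0}},y),
\]
and Varadhan's small-time asymptotics applied to $X_{\tau_{m_0}}\in\partial D_{m_0}$ and $y\in K$ furnishes the decay $p(t-\tau_{m_0},X_{\tau_{m_0}},y)\le e^{-d(\partial D_{m_0},K)^2/(3t)}$ for small $t$. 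Dividing by $p(t,x,y)^2\ge e^{-2C_K/t}$ and invoking the chosen threshold on $m_0$, this contribution is also $\le C(m)e^{-4L/t}$. The $R_{\tilde M_m}$ piece is handled identically using the isometric embedding \eqref{eq3.5}.

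The principal obstacle is the careful bookkeeping of exponential rates. Varadhan's lower bound on $p(t,x,y)$ degrades like $e^{-C_K/t}$ on $K\times K$ and enters every denominator, so each exponential gain must be extracted through $m_0$ alone via $d(\partial D_{m_0},K)\to\infty$. The polynomial prefactor $t^{-n/2}$ from the diagonal heat kernel bound is negligible against any exponential improvement, but its presence forces a slight enlargement of the threshold $L'$ for $m_0$, and the dependence $C(m)$ on $m$ in the final bound absorbs the moment constants $\alpha_m$ and the diagonal constant $C_K$.
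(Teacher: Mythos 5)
Your Part (1) is essentially identical to the paper's argument: the same common-denominator decomposition, the same three ingredients (heat-kernel difference bound from Lemma \ref{lem6.3}, the Varadhan lower bound on $p(t,x,y)$ for $x,y\in K$, and a crude upper bound on $p(t-s,x,z)$ for $z\in D_{m_0}$), with the same exponential bookkeeping.

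Your Part (2) shares the crucial first observation (that $F_m^q(\Upsilon,X_\cdot)=F_m^q(\Upsilon,X^m_\cdot)$ because $l_m'$ vanishes after $\tau_m=\tau_m^k$) and the buffer decomposition along $\tau_{m_0}$, but you handle the ``bad set'' by a genuinely different route. The paper bounds $p(t-s,X_s,y)$ by $\sup_{r\in[t/2,t],\,z\in M,\,y\in K}p(r,z,y)\le e^{1/t}$ (which follows from \eqref{eq3.1} for $z\in D_{m_0}$ and from Lemma \ref{lem6.2} for $z\notin D_{m_0}$), applies Cauchy--Schwarz, and then uses smallness of $\Pp_x(\tau_{m_0}<t)$ via \eqref{eq3.2}. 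You instead condition at $\tau_{m_0}$, apply the strong Markov property and Chapman--Kolmogorov, and extract the exponential decay from the Varadhan asymptotics of $p(t-\tau_{m_0},X_{\tau_{m_0}},y)$ with $X_{\tau_{m_0}}\in\partial D_{m_0}$. That alternative works, and is arguably cleaner in that it trades the exit-time estimate \eqref{eq3.2} for a single use of \eqref{eq3.1}.

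There is, however, a concrete gap in the step $\Ee_x\left[p(t-s,X_s,y)^2\mid\mathcal F_{\tau_{m_0}}\right]\le C_K t^{-n/2}\,p(t-\tau_{m_0},X_{\tau_{m_0}},y)$. After Chapman--Kolmogorov, the factor you extract is $\sup_{w\in M}p(t-s,w,y)$, and bounding this by a Gaussian diagonal estimate $C_K t^{-n/2}$ requires a pointwise on-diagonal bound \emph{uniformly over all $w\in M$}, not merely at $y\in K$ (by the semigroup and Cauchy--Schwarz, $p(u,w,y)\le\sqrt{p(u,w,w)\,p(u,y,y)}$, and $\sup_w p(u,w,w)$ is not controlled on a general complete manifold without curvature hypotheses). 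This is exactly the situation the paper is designed to avoid. The fix is simple: replace your $C_K t^{-n/2}$ with the weaker but curvature-free bound $\sup_{w\in M,\,y\in K}p(t-s,w,y)\le e^{1/t}$, obtained by combining \eqref{eq3.1} (for $w\in D_{m_0}$) with Lemma \ref{lem6.2} (for $w\notin D_{m_0}$). With that substitution your rate count still closes, since you have an arbitrarily large $d(\partial D_{m_0},K)^2$ in the exponent to absorb the extra $e^{1/t}$.
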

\begin{proof}
In the proof,  the constant $C$ may represent different constants in different lines.
Let $r_0:=\sup_{x,y\in K}d_M(x,y)$ denote the diameter of $K$. Since
$M$ is non-compact, we can choose a natural number $\tilde m_0$ (which may depend on $K$ and $L$) such that
$$K\subset B_o(2\tilde m_0-2)\subset D_{\tilde m_0}$$ and for all $m>\tilde m_0$,
$$d\left(K, \partial  B_o(2\tilde m_0-2)\right)=d_{\tilde M_m}
\left(K, \partial B_o(2\tilde m_0-2)\right)> 4(L+ r_0+1).$$

Also, by the heat kernel comparison \eqref{eq3.6} and \eqref{eq3.7a}, we can find a $m_0>\tilde m_0$ so that for all $m>m_0$, there exists a constant $t_2(K,L,m)>0$ such that,
\begin{equation}\label{eq3.12}
\begin{split}
\left|p(t,z,y)-p_{\tilde M_{m}}(t,z,y)\right| \le e^{-\frac{4(L+r_0+1)^2}{t}},\quad\quad \ \forall\ t\in (0,t_2],\ z,y\in D_{m_0}.
\end{split}
\end{equation}
According to the asymptotic relations \eqref{eq3.1} and \eqref{eq3.2},  for every $m>m_0$( taking $m_0$ larger as is necessary) we could find a constant $0<t_1(K,L,m)\le t_2$ such that  for all $t\in (0,t_1]$,
\begin{eqnarray}
\label{l4-1-3}
&&p(t,z,y)\le  e^{\frac{1}{t}}, \qquad \ \qquad  p_{\tilde M_m}(t,z,y)\le e^{\frac{1}{t}},\quad \qquad \quad \forall z,y\in D_{m_0},
\\
\label{l4-1-4}
&&p(t,z,y)\ge e^{-\frac{r_0^2+1}{t}},\qquad  \; p_{\tilde M_m}(t,z,y)\ge e^{-\frac{r_0^2+1}{t}},\; ~\qquad \forall  z,y\in K,
\\
\label{l4-1-5}
&&\Pp_z\left(\tau_{m_0}<t\right)\le e^{-\frac{4(L+r_0+1)^2}{t}},
\qquad\qquad\qquad\qquad \qquad ~~~\forall\,\ \ z\in K.
\end{eqnarray}
By the small time locally uniform heat kernel bound \eqref{eq3.3}, for every $m>m_0$ there exists a number $0<t_0(K,L,m)\le t_1$ such that
for all $ t\in (0,t_0]$,
\begin{equation}\label{l4-1-6}
p(t,z_1,y)\vee\ p_{\tilde M_m}(t,z_2,y)\le 1,   ~~~~~~\quad \ \forall  z_1\in M\cap D_{m_0}^c,\;
z_2\in \tilde M_m\cap D_{m_0}^c, \; y\in K.
\end{equation}
Therefore for every $m>m_0$ and for every  $0<s\le \frac{t}{2}$, every $0<t\le t_0$, and for all $x,y\in K$ and $ z\in D_{m_0}$, we have
 \begin{equation}
 \begin{split}
&\left| \f {p(t-s, x,z)}{p(t,x,y)}- \f {p_{\tilde M_m}(t-s, x,z)}{p_{\tilde M_m}(t,x,y)} \right| \\
&\le \frac{ p(t-s,x,z)\left|p_{\tilde M_m}(t,x,y) -p(t,x,y)\right|+
p(t,x,y) \left|p_{\tilde M_m}(t-s,x,z) -p(t-s,x,z) \right|  }{p(t,x,y)p_{\tilde M_m}(t,x,y)}\\
&\le  2e^{\frac{2(1+r_0^2)}{t}} e^{\frac{2}{t}} e^{-\frac{4(L+r_0+1)^2}{t}}
\le 2e^{-\frac{4L}{t}}.
 \end{split}
 \end{equation}
Here the second step above is due to \eqref{eq3.12}--\eqref{l4-1-4}. Thus, we finish the proof of $(1)$.

For all $m>m_0$, let us split the terms as follows,
\begin{align*}
&\Ee_x \left[ F^q_m(\Upsilon, X_\cdot) 
 \f {p(t-s, X_s,y)}{p(t,x,y)}
\right]\\
=&\Ee_x \left[ F^q_m(\Upsilon, X_\cdot)  \f {p(t-s, X_s,y)}{p(t,x,y)}\1_{\{t\le \tau_{m_0}\}}\right]
+\Ee_x \left[ F^q_m(\Upsilon, X_\cdot)  \f {p(t-s, X_s,y)}{p(t,x,y)}\1_{\{ t>\tau_{m_0}\}}\right]\\
=&:I_1^m(s,t)+I_2^m(s,t).
\end{align*}
Since $l_m'\left(r,X_{\cdot}\right)\neq 0$ if only if $t<\tau_m$, then $l_m'\left(r,X_{\cdot}^m\right)=l_m'\left(r, X_{\cdot}\right)$ and
we have
\begin{align*}
F_m^q(\Upsilon,X_\cdot^m)&=\left(\int_0^{s} \Upsilon_r l_m'\left(r,X^m_{\cdot}\right)\d B_r\right)^q=
\left(\int_0^{s\wedge \tau_m} \Upsilon_r l_m'\left(r,X^m_{\cdot}\right)\d B_r\right)^q\\
&=\left(\int_0^{s} \Upsilon_r l_m'\left(r,X_{\cdot}\right)\d B_r\right)^q=F_m^q(\Upsilon,X_\cdot).
\end{align*}
Note also,  $X_s^m=X_s$ for every $s\le \frac{t}{2}<\tau_m$.  It holds that
\begin{align*}
&\Ee_x \left[ F_m^q(\Upsilon,X_\cdot^m)
 \f {p_{\tilde M_m}(t-s, X_s^m,y)}{p_{\tilde M_m}(t,x,y)}\right]\\
=&\Ee_x \left[ F^q_m(\Upsilon, X_\cdot)  \f {p_{\tilde M_m}(t-s, X_s,y)}{p_{\tilde M_m}(t,x,y)}\1_{\{t\le \tau_{m_0}\}}\right]
+\Ee_x \left[ F^q_m(\Upsilon, X_\cdot)  \f {p_{\tilde M_m}(t-s, X_s^m,y)}{p_{\tilde M_m}(t,x,y)}\1_{\{t> \tau_{m_0}\}}\right]\\
=&:J_1^m(s,t)+J_2^m(s,t),\quad 0<s<\frac{t}{2},
\end{align*}
Note that
\begin{equation}\label{14-1-11}\alpha_m=\sup_{x\in K}\Ee_x\left[\int_0^1 \left|\Upsilon_r l_m'\left(r,X_{\cdot}\right)\right|^q \d r \right]<\infty.\end{equation}
This follows from the moment estimates on $l_m$, \eqref{eq2.4}, the assumption \eqref{l4-1-0}, and
also $$
\alpha_m \le
\sup_{x\in K}\Ee_x\left[\int_0^{1\wedge \tau_m} \left|\Upsilon_r\right|^{2q} \d r \right]^{1/2}
\sup_{x\in K}\Ee_x\left[\int_0^{1\wedge \tau_m} \left|l_m'\left(r,X_{\cdot}\right)\right|^{2q} \d r \right]^{1/2}.
$$

For all
$m>m_0$,  $x,y\in K$, $0<s\le \frac{t}{2}$, and  $0<t\le t_0$, we may assume that $t_0\le 2$,
\begin{align*}
&|I_1^m(s,t)-J_1^m(s,t)|\\
&\le \sup_{z\in D_{m_0}}\left| \f {p(t-s, z,y)}{p(t,x,y)}
- \f {p_{\tilde M_m}(t-s, z,y)}{p_{\tilde M_m}(t,x,y)} \right|
\Ee_x\left[\left|\int_0^{s} \Upsilon_r l_m'\left(r,X_{\cdot}\right)\d B_r\right|^q \right]\\
&\le  Ce^{-\frac{4L}{t}}\sup_{x\in K}\Ee_x\left[\int_0^{1\wedge \tau_m} \left|\Upsilon_r l_m'\left(r,X_{\cdot}\right)\right|^q \d r \right]=C \alpha_m\;e^{-\frac{4L}{t}}.
\end{align*}
In the penultimate step, we have applied  Burkholder-Davies-Gundy inequality and \eqref{l4-1-7}.
According to \eqref{l4-1-3} and \eqref{l4-1-6} we also have
$$\sup_{z\in M, y\in K}p(t,z,y)\le e^{\frac{1}{t}},\ \ \forall\ 0<t\le t_0.$$
Combining this with \eqref{l4-1-4}--\eqref{l4-1-5}, Cauchy-Schwartz inequality, and Burkholder-Davies-Gundy inequality, we obtain
 that for every $m>m_0$, $x,y\in K$, $0<s\le \frac{t}{2}$, and $0<t\le t_0$,
\begin{align*}
&|I_2^m(s,t)|\\
&\le C\; e^{\frac{r_0^2+1}{t}}\;\sup_{r\in [\frac{t}{2},t],z\in M,y\in K}p(r,z,y)\;
\Ee_x\left[\left|\int_0^{s} \Upsilon_r l_m'\left(r,X_{\cdot}\right)\d B_r\right|^{2q} \right]^{1/2}
\Pp_x\left(\tau_{m_0}<t\right)^{1/2}\\
&\le Ce^{\frac{r_0^2+1}{t}}e^{\frac{2}{t}}e^{-\frac{2(r_0+L+1)^2}{t}}\Ee_x\left[\int_0^{1\wedge \tau_m} \left|\Upsilon_r l_m'\left(r,X_{\cdot}\right)\right|^{2q} \d r \right]^{1/2}\le C\alpha_m \;e^{-\frac{2L}{t}}.
\end{align*}
Here in the last step we  used (\ref{14-1-11}).
Similarly, we obtain that for every $m>m_0$, $x,y\in K$,
\begin{align*}
|J_2^m(s,t)|\le C \alpha_m \;e^{-\frac{2L}{t}}, \qquad \hbox{ for all $0<s\le \frac{t}{2}$ and $0<t\le t_0$.}
\end{align*}
Combing the  above estimates for $I_1^m,I_2^m,J_1^m,J_2^m$ we see that,  for every
$m>m_0$, $x,y\in K$, $0<s\le \frac{t}{2}$ and $0<t\le t_0$,
\begin{align*}
&\Bigg |\Ee_x \left[ F_m^q(\Upsilon, X_\cdot) \f {p(t-s, X_s,y)}{p(t,x,y)}
\right]
-
 \EE_x\left[F_m^q(\Upsilon, X_\cdot^m)
 \f {p_{\tilde M_m}(t-s, X_s^m,y)}{p_{\tilde M_m}(t,x,y)} \right]\Bigg|\\
 &\le |I_1^m(s,t)-J_1^m(s,t)|+|I_2^m(s,t)|+|J_2^m(s,t)|\le C \alpha_m e^{-\frac{L}{t}},
\end{align*}
which is \eqref{l4-1-1} and we have finished the proof.
\end{proof}

\begin{remark}\label{rem6.1}
By the same arguments in the proof for \eqref{l4-1-1} we could obtain the following  under the conditions of Lemma
\ref{lem6.4}:  For sufficiently large $m$ we could find a positive number  $t_0(K,L,m)$ so that for every $x,y\in K$, $0<s\le \frac{t}{2}$, and  $0<t\le t_0$ the following estimates hold, replacing $l_m$ by $l_{m'}$, or $dB_r$ by $dr$.
\begin{equation}\label{r4-1-1}
\begin{split}
&\Bigg |\Ee_x \left[ \left(\int_0^{s} \Upsilon_r  \,l_m
(r,X_{\cdot})\d B_r\right)^q
\left(   \f {p(t-s, X_s,y)}{p(t,x,y)}
- \f {p_{\tilde M_m}(t-s, X_s^m,y)}{p_{\tilde M_m}(t,x,y)}\right)
\right]   \Bigg|
\le  C(m)e^{-\frac{L}{t}},
\end{split}
\end{equation}
\begin{equation}\label{r4-1-2}
\begin{split}
&\Bigg |\Ee_x \left[ \left(\int_0^{s} \Upsilon_r l_m'\left(r,X_{\cdot}\right)\d r\right)^q  \left( \f {p(t-s, X_s,y)}{p(t,x,y)}
-  \f {p_{\tilde M_m}(t-s, X_s^m,y)}{p_{\tilde M_m}(t,x,y)}  \right) \right]\Bigg|
\le  C(m)e^{-\frac{L}{t}},
\end{split}
\end{equation}
\begin{equation}\label{r4-1-3}
\begin{split}
&  \Bigg |\Ee_x \left[ \left(\int_0^{s} \Upsilon_r l_m\left(r,X_{\cdot}\right)\d r\right)^q
\left(  \f {p(t-s, X_s,y)}{p(t,x,y)}
-  \f {p_{\tilde M_m}(t-s, X_s^m,y)}{p_{\tilde M_m}(t,x,y)}\right)
\right]\Bigg|
\le  C(m)e^{-\frac{L}{t}}.
\end{split}
\end{equation}
These estimates will also be used in
the proof for Proposition \ref{prp6.6}.
\end{remark}

\subsection{Proof of the main theorem: gradient estimates}
\begin{lem}\label{lem6.5}
Let $t>0$, $x\in M$ and $v\in T_x M$. Suppose that $m$ is a natural number such that  $x\in D_m$. Let
 $h\in L^{1,2}(\Omega;\R^n)$ be given by
$$h(s)=\left(\frac{t-2s}{t}\right)^+\times l_m(s,X_{\cdot})\times U_0^{-1}v.$$
Then for any $f\in C_b(M)$ we have
\begin{equation}\label{l5-2-1}
\begin{split}
\langle \nabla P_tf(x), v\rangle_{T_x M}&=-\EE_x\left[ f(X_t)\int_0^t \<\Theta_s^h, dB_s\> \1_{\{t<\zeta\}}\right],
\end{split}
\end{equation}
where $\Theta_s^h$ defined by \eqref{e3-2} with the $h$ chosen above.
\end{lem}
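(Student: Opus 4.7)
The plan is to prove the identity by the Elworthy--Li integration-by-parts technique, localized via the cut-off $h$, using the It\^o product rule for an appropriate pair of martingales together with the Bochner--Weitzenb\"ock formula for the heat flow gradient. First I will verify the admissibility of $h$: (a) $h(0)=U_0^{-1}v$ because $l_m(0,X_\cdot)=1$ when $x\in D_m$ by Lemma~\ref{lem5.1}(1); (b) both $h$ and $h'$ are supported in $[0,\sigma]$, where $\sigma:=\tau_m\wedge(t/2)$, so in particular $h(t)=0$; (c) $h$ is $\mathscr{F}_s$-adapted, absolutely continuous in $s$, and lies in $L^{2,1}(\Omega;\R^n)$ by the moment bounds of Lemma~\ref{lem5.1}(3). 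Consequently $\Theta_s^h = h'(s)+\tfrac12\ric_{U_s}(h(s))$ is supported on $[0,\sigma]$, during which $X_s\in\overline{D_m}$ is compactly contained and $|\Ric|$ is uniformly bounded, so $\Theta^h\in L^2(\d t\otimes \d \Pp)$.

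The core of the argument pairs two martingales. Let $M_s:=P_{t-s}f(X_s)\mathbf{1}_{\{s<\zeta\}}$ on $[0,t)$; by the heat equation and It\^o's formula along the horizontal lift, $M$ is bounded by $\|f\|_\infty$ and satisfies $\d M_s=\langle \tilde Z_s,\d B_s\rangle$, where $\tilde Z_s:=U_s^{-1}\nabla P_{t-s}f(X_s)$, with terminal value $M_t=f(X_t)\mathbf{1}_{\{t<\zeta\}}$ by continuity. Let $N_s:=\int_0^s \langle \Theta_r^h,\d B_r\rangle$, an $L^2$-martingale by (c). Computing $\EE[M_tN_t]$ two ways, one obtains $\EE[f(X_t)\mathbf{1}_{\{t<\zeta\}}N_t]$ directly (the target right-hand side up to sign), and $\EE[\int_0^\sigma \langle \tilde Z_s,\Theta_s^h\rangle\,\d s]$ via the It\^o product rule (the integral terminating at $\sigma$ because $\Theta^h$ vanishes beyond). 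To identify the latter with $-\langle \nabla P_tf(x),v\rangle$, invoke the Bochner--Weitzenb\"ock formula $(\partial_s+\tfrac12\Delta)\nabla P_{t-s}f = \tfrac12\Ric^\sharp\nabla P_{t-s}f$, which via It\^o along the horizontal lift yields $D\tilde Z_s = (\d B\text{-martingale})+\tfrac12\ric_{U_s}(\tilde Z_s)\,\d s$. Since $h$ is absolutely continuous with $h(\sigma)=0$ and $h(0)=U_0^{-1}v$, a pathwise integration by parts gives
\[
\int_0^\sigma \langle \tilde Z_s,h'(s)\rangle\,\d s = -\langle \nabla P_tf(x),v\rangle - \int_0^\sigma \langle D\tilde Z_s,h(s)\rangle,
\]
and substituting the Bochner expansion while using self-adjointness of $\ric$ produces $\int_0^\sigma \langle \tilde Z_s,\Theta_s^h\rangle\,\d s = -\langle \nabla P_tf(x),v\rangle - (\text{martingale})$; taking expectations closes the identification.

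The main obstacle is ensuring the martingale term above has zero expectation, which requires $|\tilde Z_s|$ to lie in a suitable integrability class on $[0,\sigma]$. This is precisely the role of the cut-off: on $[0,\sigma]$ one has $X_s\in \overline{D_m}$ (compact) and $t-s\geq t/2$, bounded away from the heat kernel singularity. Interior parabolic regularity for $P_{t-s}f$ on $D_m\times(t/2,t)$ with bounded input $|f|\leq\|f\|_\infty$ then yields a uniform bound $\sup_{s\in[0,t/2],\,y\in\overline{D_m}}|\nabla P_{t-s}f(y)|\leq C(m,t,\|f\|_\infty)$; combined with the $L^2$-bounds on $h, h'$ from Lemma~\ref{lem5.1}(3) and the boundedness of $\Ric$ on $\overline{D_m}$, this closes the argument, bypassing any global curvature hypothesis on $M$.
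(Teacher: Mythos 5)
Your proof is correct but takes a genuinely different route from the paper's. The paper first establishes the first-order formula on a compact manifold by the second-order Girsanov-type stochastic variation (this is \eqref{p3-1-0} in Proposition~\ref{prp4.5}), and then extends to non-compact $M$ by the same scheme as Theorem~\ref{thm3.1}: isometrically embed $D_m$ into a compact manifold $\tilde M_k$, apply the compact formula on $\tilde M_k$, observe that the integrand is independent of $k$ because the cut-off confines everything to $D_m$, and pass to the limit $k\to\infty$ (in the second-order case this additionally requires a weak formulation against test functions and the continuity Lemma~\ref{lem7.2} to upgrade the distributional derivative). You instead run the Elworthy--Li martingale-pairing argument directly on $M$ itself: apply It\^o's formula to $P_{t-s}f(X_s)$, use the Bochner--Weitzenb\"ock commutation to obtain the covariant evolution of $\tilde Z_s = U_s^{-1}\nabla P_{t-s}f(X_s)$, and control all remainder martingales by interior parabolic regularity on $\overline{D_m}\times[t/2,t]$. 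Both routes exploit the cut-off in the same essential way, namely to confine the analysis to a compactly contained region and a time window bounded away from the heat kernel singularity. What your approach buys is directness: no compact embedding and no weak-to-strong upgrade. What the paper's approach buys is that it never has to differentiate the heat flow on $M$ directly or invoke the Bochner formula, and the identical skeleton carries through to the second-order formula of Theorem~\ref{thm3.1}, where the analogue of your pathwise covariant integration by parts would be considerably more delicate. One technical point worth writing out explicitly: to compute $\EE[M_tN_t]$ by the It\^o product rule without worrying about integrability of $\tilde Z_s$ near $s=t$ or off $\overline{D_m}$, first use $N_t = N_\sigma$ and condition on $\mathscr{F}_\sigma$ to reduce to $\EE[M_\sigma N_\sigma]$; on $[0,\sigma]$ both local martingales have bounded, square-integrable integrands, so the bracket identity and the vanishing of the cross terms hold in expectation by optional stopping at the bounded stopping time $\sigma \le t/2$.
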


\begin{proof}
When $M$ is compact, \eqref{l5-2-1} is just \eqref{p3-1-0} established in Proposition \ref{prp4.5}. For general
non-compact complete $M$, we will use the arguments based on truncation and approximation.
 For each $k>m$, let $\{U_t^k\}_{t\ge 0}$ be the horizontal Brownian motion on compact manifold $\tilde M_k$ as defined in (\ref{e4-1})
 with $\pi(U_0^k)=x\in D_m$.  Set $X_t^k=\pi(U_t^k)$ and $P_t^k f(x)=\Ee_x\left[f(X_t^k)\right]$.
Let
\begin{equation}\label{l5-2-1a}
h(s)=\left(\frac{t-2s}{t}\right)^+ \cdot l_m(s,X_\cdot)\cdot U_0^{-1}v.
\end{equation}

According to \eqref{e4-2}, precisely $\tau_m= \tau_{m}^k$ and $h(s)\neq 0$  if and only if when  $s\le \f t 2\wedge   \tau_m$.
So furthermore
$$h(s)=\left(\frac{t-2s}{t}\right)^+ \cdot l_m(s,X_\cdot^k)\1_{\{s<\f t 2 \wedge \tau_m\}}\cdot U_0^{-1}v,\quad \forall\ k>m,$$
$$h'(s)=\left(-\f 2 tl_m(s,X_\cdot^k) + l_m'(s,X_\cdot^k)\left(\frac{t-2s}{t}\right)^+\right)\1_{\{s<\f t 2 \wedge \tau_m\}}\cdot U_0^{-1}v,\quad \forall\ k>m,$$
which means that  we can replace $X_\cdot$ by $X_\cdot^k$ in the expression of $h(s)$ and $h'(s)$.
Let $\Theta_s^{h,k}$ be given by \eqref{e3-2}, with the manifold $M$ replaced by  $\tilde M_k$
(associated with $X_\cdot^k$). Therefore by \eqref{e3-2} we have the following expression
\begin{equation}\label{add1}
\Theta_s^{h,k}=h'(s)+\ric^{\tilde M_k}_{U_s^k}(h(s))=h'(s)+\ric_{U_s}(h(s))=\Theta_s^h,\quad \forall\ k>m.
\end{equation}
Here both sides of \eqref{add1} vanish for $s>\tau_m$, meanwhile we have used the fact that $U_s=U_s^k$ when $s<\tau_m$ and $\Ric^{\tilde M_k}_z=\Ric_z$ for every
$z\in D_m$ and
\begin{align*}
\ric^{\tilde M_k}_{U_s^k}(h(s))=\ric^{\tilde M_k}_{U_s^k}(h(s))\1_{\{s<\tau_m\}}=
\ric_{U_s}(h(s)),\quad \forall\ k>m.
\end{align*}

Moreover, we observe that for any compact set $K\subset D_m$ and $q>0$,
\begin{equation}\label{eq3.13}
\begin{split}
\sup_{x\in K}\Ee_x\left[
\int_0^{1\wedge \tau_m} \left| \ric_{U_s}(U_0^{-1}v)\right|^q \d s\right]
&\le |v|^q \sup_{x\in K}\Ee_x\left[\int_0^{1\wedge \tau_m}\left|\ric_{U_s}\1_{\{s<\tau_m\}}\right|^q \d s\right]\\
&\le |v|^q \sup_{z\in D_m}\|{\color{blue} \Ric_z}\|^q<\infty.
\end{split}
\end{equation}
Combining this with \eqref{eq2.4} and the fact that $h(s)\neq 0$ only if $s\le t\wedge \tau_m=t\wedge \tau_{m}^k$ yields immediately that
\begin{equation}\label{add4}
\sup_{x\in D_m, v\in T_x M, |v|=1}\Ee_x\left[\int^t_0|\Theta_s^{h}|^2ds \right]<\infty,\quad \ \forall\ t>0.
\end{equation}
Thus, applying  Proposition \ref{prp4.5} to $P_t^kf$ (note that $\tilde M_k$ is compact) and using \eqref{add1} we obtain that
for all $v\in T_x M$,
\begin{equation}\label{add2}
\begin{split}
\langle \nabla P^k_tf(x), v\rangle_{T_x M}&=-\EE_x\left[ f(X^k_t)\int_0^t \<\Theta_s^{h, k}, dB_s\> \right]=-\EE_x\left[ f(X^k_t)\int_0^t \<\Theta_s^{h}, dB_s\>\right].
\end{split}
\end{equation}
For any function $\psi \in C_c^{\infty}(M)$ and vector field  $V \in C_c^{\infty}(M;TM)$ with supports in $ D_m$
satisfying that $|V(x)|\le 1$ for all $x\in D_m$, we can use $\nabla$ and $\d x$ for the  the gradient operator and the Riemannian volume measure on both manifolds $M$ and $\tilde M_k$, so we have
\begin{equation}\label{add3}
\begin{split}
&\int_{M} \EE_x\left[ f(X^k_t)\int_0^t \<\Theta_s^{h(x)}, dB_s\> \right]\psi(x)\d x
\\&=\int_{M} \langle \nabla P^k_tf(x), V(x)\rangle_{T_x M}\psi(x)\d x\\
&=\int_{\tilde M_k} \langle \nabla P^k_tf(x), V(x)\rangle_{T_x M}\psi(x)\d x\\
&=-\int_{\tilde M_k} \Ee_x\left[f\left(X_t^{k}\right)\right]\div(V\psi)(x)\d x=-\int_{M}
\Ee_x\left[f\left(X_t^{k}\right)\right]\div(V\psi)(x)\d x.
\end{split}
\end{equation}
Here $h(x)$ is defined by \eqref{l5-2-1a} with $v=V(x)$.

Meanwhile note that $X_t=X_t^k$ if $t<\tau_k$, for every $x\in D_m$ it holds
\begin{equation*}
\begin{split}
&\lim_{k \rightarrow \infty}
\left|\Ee_x\left[f(X_t^{k})\int_0^t \<\Theta_s^{h(x)}, dB_s\>\right]
-\Ee_x\left[f(X_t)\int_0^t \<\Theta_s^{h(x)}, dB_s\>\1_{\{t<\zeta\}}\right]\right|\\
&\le \lim_{k \rightarrow \infty}
\Ee_x\left[\left|f(X_t^{k})-f(X_t)\1_{\{t<\zeta\}}\right|
\left|\int_0^t \<\Theta_s^{h(x)}, dB_s\>\right|\right]\\
&\le \lim_{k \rightarrow \infty}\sqrt{\Ee_x\left[\left|f(X_t^{k})-f(X_t)\1_{\{t<\zeta\}}\right|^2\right]}
\sqrt{\Ee_x\left[\left|\int_0^t \<\Theta_s^{h(x)}, dB_s\>\right|^2\right]}\\
&\le \lim_{k \rightarrow \infty}\sqrt{2}C\|f\|_{\infty}\sqrt{\Pp_x\left(\tau_k\le t<\zeta\right)}=0,
\end{split}
\end{equation*}
where
$$C:=\sup_{x\in D_m, v\in T_x M, |v|=1}\Ee_x\left[\int^t_0|\Theta_s^{h}|^2ds \right]$$ is finite for every $t>0$,
which is due to \eqref{add4}. With this we may take $k\to \infty$ in \eqref{add3}, then
$$\Ee_x\left[f(X_t^{k})\int_0^t \<\Theta_s^{h(x)}, dB_s\>\right]\to  \Ee_x\left[f(X_t)\int_0^t \<\Theta_s^{h(x)}, dB_s\>\1_{\{t<\zeta\}}\right]$$
and consequently
\begin{equation*}
\begin{split}
& \int_{D_m} \Ee_x\left[f\left(X_t\right)\int_0^t \<\Theta_s^{h(x)}, dB_s\>\1_{\{t<\zeta\}}\right]\psi(x)\d x
=\int_{D_m} \Ee_x\left[f\left(X_t\right)\1_{\{t<\zeta\}}\right]\div(V\psi)(x)\d x.
\end{split}
\end{equation*}
Since $m$ is arbitrary, so it follows that for all test vector fields $V \in C_c^{\infty}(M;TM)$ and test functions $\psi \in C_c^{\infty}(M)$,
\begin{equation*}
\begin{split}
& \int_{M} \Ee_x\left[f\left(X_t\right)\int_0^t \<\Theta_s^{h(x)}, dB_s\>\1_{\{t<\zeta\}}\right]\psi(x)\d x
=\int_{M} P_tf(x)\div(V\psi)(x)\d x,
\end{split}
\end{equation*}
which means that  the weak (distributional) gradient
$\nabla P_t f$ exists
\begin{equation*}
\begin{split}
\left\langle \nabla P_t f(x), V(x)\right\rangle_{T_x M}=\Ee_x\left[f\left(X_t\right)
\int^t_0\<\Theta_s^{h(x)}, dB_s\>\1_{\{t<\zeta\}}\right],\quad \ x\in M.
\end{split}
\end{equation*}
According to the same arguments in the proof of Lemma \ref{lem7.2} in the Appendix, the functional $x \mapsto \Ee_x\Big[f\left(X_t\right)$
$\int^t_0\<\Theta_s^{h(x)}, dB_s\>\1_{\{t<\zeta\}}\Big]$ is continuous.
So we have verified that the distributional derivative $\nabla P_t f$ exists and is continuous, then $\nabla P_t f$ is
the classical gradient and expression \eqref{l5-2-1} holds.
\end{proof}

Now we present an  estimate
for the difference between the gradients of logarithmic heat kernels. Note that for every
$x, y \in K \subset B_o(2m-2)\subset D_m$,
we could view  $\nabla_x\log p(t,x,y)$ and
$\nabla_x\log p_{\tilde M_{m}}(t,x,y)$ as vectors in $T_x M$, so that
$$|\nabla_x\log p(t,x,y)-\nabla_x \log p_{\tilde M_{m}}(t,x,y)|_{T_x M}$$
is well defined.

Let $\partial$ be the cemetery point. We make the convention that $p(t,\partial,y)=0$ for all $t$.
\begin{prp}\label{prp6.6}
Suppose that  $K$ is a compact subset of $M$ and $L>1$ is a positive number.
Then for every suficiently large $m$,  we could find a number $t_0(K,L,m)$, depending on $K,L, m$, such that for every $0<t\le t_0$,
\begin{equation}\label{eq3.7}
\sup_{x,y \in K} \left|\nabla_x\log p(t,x,y)- \nabla_x \log p_{\tilde M_{m}}(t,x,y)\right|_{T_x M}
\le C(m)e^{-\frac{L}{t}},
\end{equation}
where $C(m)$ is a positive constant, which may depend on $m$.
\end{prp}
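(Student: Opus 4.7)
The plan is to apply the first-order gradient formula from Lemma \ref{lem6.5} on both $M$ and on the compact embedding $\tilde M_m$, observe that the localized Cameron--Martin vector built from $l_m$ forces the resulting stochastic integrals to coincide pathwise, and then bound the remaining difference of heat-kernel ratio factors using Lemma \ref{lem6.4} together with Remark \ref{rem6.1}.

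Fix $x,y\in K$ and a unit vector $v\in T_xM$. By the semigroup property, $p(t,x,y)=P_{t/2}[p(t/2,\cdot,y)](x)$ with $p(t/2,\cdot,y)\in C_b(M)$, so applying Lemma \ref{lem6.5} with $t$ replaced by $t/2$ and $f=p(t/2,\cdot,y)$ yields
\[
\langle\nabla_x\log p(t,x,y),v\rangle = -\mathbb{E}_x\!\left[\frac{p(t/2,X_{t/2},y)}{p(t,x,y)}\,I\,\mathbf{1}_{\{t/2<\zeta\}}\right], \quad I:=\int_0^{t/2}\langle\Theta^h_r,dB_r\rangle,
\]
where $h(r)=\bigl((t/2-2r)/(t/2)\bigr)^{+}l_m(r,X_\cdot)U_0^{-1}v$. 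The same formula on the compact $\tilde M_m$, with $X^m,U^m,p_{\tilde M_m}$ replacing $X,U,p$ and no indicator (no explosion), gives the analogous expression for $\langle\nabla_x\log p_{\tilde M_m}(t,x,y),v\rangle$. Since $l_m$ vanishes past $\tau_m$, the vector field $h$ is supported in $[0,\tau_m]$; combining this with $X_r=X_r^m$, $U_r=U_r^m$ on $[0,\tau_m]$ from \eqref{e4-2}, and the fact that the isometric embedding $D_m\hookrightarrow\tilde M_m$ preserves the curvature tensor, the two stochastic integrals coincide almost surely, $I=I^m$. Subtracting the two formulas and splitting off the $\{t/2\ge\zeta\}$ part, the difference becomes
\[
\mathbb{E}_x\!\left[I\!\left(\frac{p_{\tilde M_m}(t/2,X_{t/2}^m,y)}{p_{\tilde M_m}(t,x,y)}-\frac{p(t/2,X_{t/2},y)}{p(t,x,y)}\right)\!\mathbf{1}_{\{t/2<\zeta\}}\right] + \mathbb{E}_x\!\left[I\,\frac{p_{\tilde M_m}(t/2,X_{t/2}^m,y)}{p_{\tilde M_m}(t,x,y)}\,\mathbf{1}_{\{t/2\ge\zeta\}}\right].
\]

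Next, writing $\Theta^h_r = h'(r)+\tfrac{1}{2}\ric_{U_r}(h(r))$ with $h(r)=\phi(r)l_m(r,X_\cdot)U_0^{-1}v$ and $\phi(r):=((t/2-2r)/(t/2))^{+}$, the integrand splits as $l_m(r,X_\cdot)\langle\alpha_r,dB_r\rangle + l_m'(r,X_\cdot)\langle\beta_r,dB_r\rangle$ with adapted $\alpha_r=\phi'(r)U_0^{-1}v+\tfrac{1}{2}\phi(r)\ric_{U_r}(U_0^{-1}v)$ and $\beta_r=\phi(r)U_0^{-1}v$, both bounded on $[0,\tau_m]$ by constants depending on $m$ and polynomially on $1/t$ (using $\sup_{\overline{D}_m}|\mathrm{Ric}|<\infty$, $|\phi|\le 1$, $|\phi'|\le 2/t$). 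The main piece then reduces to a finite sum of quantities of exactly the shape of \eqref{l4-1-1}, \eqref{r4-1-1}, \eqref{r4-1-2}, \eqref{r4-1-3}; applying these estimates with a slightly larger exponent $L'>L$ absorbs the $1/t$ factors and bounds the main piece by $C(m)e^{-L/t}$. The residual piece is controlled by Cauchy--Schwarz together with $\mathbb{P}_x(\zeta\le t/2)\le\mathbb{P}_x(\tau_m\le t/2)$ and the small-time exit estimate \eqref{eq3.2}: choosing $m_0$ large enough so that $d(K,\partial D_m)$ exceeds any preassigned constant makes the exit probability super-exponentially small and dominates both the heat-kernel lower-bound factor $e^{C/t}$ arising from $p_{\tilde M_m}(t,x,y)^{-1}$ and any polynomial factors from $\mathbb{E}_x[I^2]^{1/2}$. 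Taking $\sup_{|v|=1}$ then delivers the claim.

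The main technical obstacle is bookkeeping: one must package each of the terms in $\Theta_r^h$ so that it matches precisely the form of the comparison estimates in Lemma \ref{lem6.4}(2) and Remark \ref{rem6.1}, verify the moment hypothesis \eqref{l4-1-0} for the adapted processes $\alpha_r,\beta_r$ up to $\tau_m$ using local boundedness of the Ricci tensor on $\overline{D}_m$ and the moment estimate \eqref{eq2.4} for $l_m'$, and arrange constants so that the polynomial $1/t$ factors coming from $\phi'=-2/t$ together with the exponential lower-bound factor $p(t,x,y)^{-1}\le e^{C/t}$ are all dominated by the exponential margin $e^{-L'/t}$ produced by the comparison lemma.
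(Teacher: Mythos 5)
Your proposal follows essentially the same route as the paper: apply the first-order gradient formula from Lemma~\ref{lem6.5} with the localized Cameron--Martin vector built from $l_m$, observe that the resulting stochastic integrals coincide pathwise on $M$ and $\tilde M_m$ (since $l_m$ vanishes past $\tau_m$ and the isometric embedding preserves the metric and curvature on $D_m$), check the moment hypothesis (\ref{l4-1-0}) for the Ricci term using local boundedness on $\overline{D_m}$ together with the $l_m'$-moment estimate (\ref{eq2.4}), and then invoke the comparison estimates of Lemma~\ref{lem6.4} and Remark~\ref{rem6.1}. This is the paper's argument.

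The one step where the paper is more careful than you are is the passage to the ratio representation. You apply Lemma~\ref{lem6.5} directly with $f=p(t/2,\cdot,y)$, asserting this function lies in $C_b(M)$. On a general complete manifold without curvature assumptions, boundedness of $p(t/2,\cdot,y)$ in the spatial variable is not established anywhere in the paper and is not obvious (it is the kind of heat-kernel bound the paper is explicitly trying to avoid relying on). The paper sidesteps the issue: it applies the gradient formula to $f\in C_c(M)$, then conditions on $\sigma(X_t)$, which produces the Brownian-bridge factor $p(t/2,X_{t/2},y)/p(t,x,y)$ in the representation for a.e.\ $y$, and the continuity argument of Lemma~\ref{lem7.2} extends the identity to all $y$. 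Substituting that derivation closes the gap without changing anything else. One secondary remark: your explicit estimate for the residual term on $\{t/2\ge\zeta\}$ is correct but not needed once you adopt the paper's convention $p(t,\partial,y)=0$ (so the $M$-side integrand vanishes there), since the event $\{t/2\ge\zeta\}\subset\{\tau_{m_0}<t\}$ is then absorbed automatically into the $I_2$, $J_2$ pieces in the proof of Lemma~\ref{lem6.4}.
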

\begin{proof}
Let us  fix  points $x,y \in K$ and a unit vector $v \in T_x M$.
Let $m $ be a natural number such that  $B_o(2m-2)\supset K$. Let $t>0$ be fixed

By \eqref{l5-2-1} where  $\Theta^h=h'(t)+\f 12 \ric_{U_t}(h(t))$, we have, for every $f \in C_c(M)$,
\begin{equation*}
\begin{split}
\langle \nabla P_t f(x),v\rangle_{T_x M}
=&\frac{2}{t}\Ee_x\Bigl[\int_0^{\frac{t}{2}} \left\langle l_m(s)U_0^{-1}v,\d B_s
\right\rangle \; f\bigl(X_t\bigr)\1_{\{t<\zeta\}}\Bigr]\\
&-\Ee_x\Bigl[\int_0^{\frac{t}{2}} \Bigl\langle
\bigl(\frac{t-2s}{t}\bigr)l_m'(s)U_0^{-1}v, \d B_s \Bigr\rangle\;f\left(X_t\right)\1_{\{t<\zeta\}}\Bigr]\\
&-\frac{1}{2}\Ee_x\Bigl[\int_0^{\frac{t}{2}}\Bigl\langle \, \ric_{U_t}\Bigl(\bigl(\frac{t-2s}{t}\bigr)l_m(s)U_0^{-1}v\Bigr), \d B_s\Bigr\rangle
\;f\left(X_t\right)\1_{\{t<\zeta\}}\Bigr]
\end{split}
\end{equation*}
Since $f$ has compact support, the indicator function $\1_{\{t<\zeta\}}$ can be removed.
Taking the conditional expectation on $\sigma(X_t)$,  we obtain, for all $x \in M$ and
almost everywhere $y \in M$ (with respect to volume measure on $M$),

 \begin{equation}
\begin{split}
&t\bigl\langle \nabla_x \log p(t,x,y),v\bigr\rangle_{T_x M}\\
& =t\frac{\langle  \nabla_x p(t,x,y), v \rangle_{T_x M}}{p(t,x,y)}
=2\Ee_x\Bigl[\1_{\{t<\zeta\}}\int_0^{\frac{t}{2}} \left\langle l_m(s)U_0^{-1}v, \d B_s
\right\rangle\; |\; X_t=y\Bigr]\\
&\qquad \qquad \qquad -t\Ee_x\left[\1_{\{t<\zeta\}}\int_0^{\frac{t}{2}} \Bigl\langle
\bigl(\frac{t-2s}{t}\bigr)l_m'(s)U_0^{-1}v, \d B_s\Bigr\rangle  \; \Big|X_t=y\right]\\
&\qquad \qquad \qquad -\frac{t}{2}\Ee_x\left[\1_{\{t<\zeta\}}\int_0^{\frac{t}{2}}\Bigl\langle \, \ric_{U_t}\bigl(\frac{t-2s}{t}\bigr)l_m(s)
U_0^{-1}v, \d B_s\Bigr\rangle
\;\Big|\;X_t=y\right]\\
&=\Ee_x\Bigl[\1_{\{t<\zeta\}}\int_0^{\frac{t}{2}} g_m(s)\bigl\langle
U_0^{-1}v, dB_s
\bigr\rangle\; |\; X_t=y\Bigr].\end{split}
\end{equation}
where  $$g_m(s):=2l_m(s) -t
\left(\frac{t-2s}{t}\right)l_m'(s)
 -\frac{t}{2}\;\ric_{U_t}\left(\frac{t-2s}{t}\right)l_m(s).$$
 Thus,
  \begin{equation}\label{e3-8}
\begin{split}
&t\bigl\langle \nabla_x \log p(t,x,y),v\bigr\rangle_{T_x M}
=\Ee_x\Bigl[\1_{\{t<\zeta\}}\int_0^{\frac{t}{2}} g_m(s)\bigl\langle
U_0^{-1}v, dB_s
\bigr\rangle\; |\; X_t=y\Bigr]\\
&=\Ee_x\left[\int_0^{\frac{t}{2}} g_m(s)\bigl\langle
U_0^{-1}v, dB_s
\bigr\rangle\;\frac{p\left(\frac{t}{2},X_{\frac{t}{2}},y\right)}{p(t,x,y)}\1_{\{\f t 2<\zeta\}} \right]\\
&=\Ee_x\left[\int_0^{\frac{t}{2}} g_m(s)\bigl\langle
U_0^{-1}v, dB_s
\bigr\rangle\;\frac{p\left(\frac{t}{2},X_{\frac{t}{2}},y\right)}{p(t,x,y)} \right].
\end{split}
\end{equation}
We have used the property that for $p(\frac{t}{2},X_{\frac{t}{2}},y)=0$  whenever $\f t 2\ge\zeta(x)$.

Based on the  heat kernel estimates in the previous lemmas, by the proof of Lemma \ref{lem7.2} we know immediately
$$x\mapsto\Ee_x\left[\int_0^{\frac{t}{2}} g_m(s)\bigl\langle
U_0^{-1}v, dB_s
\bigr\rangle\;\frac{p\left(\frac{t}{2},X_{\frac{t}{2}},y\right)}{p(t,x,y)}\right]$$
is  continuous.
So the expression
above is true for all $x,y \in M$.

Since  $l_m'\left(s,X_{\cdot}^m\right)=
l_m'\left(s, X_{\cdot}\right)$ and $l_m\left(s,X_{\cdot}^m\right)=
l_m\left(s, X_{\cdot}\right)$ for $s<\tau_m$ and $X_s=X_s^m$,
applying the same arguments above to $\tilde M_{m}$ we have
\begin{equation}\label{e3-9}
\begin{split}
&\langle t\nabla \log p_{\tilde M_{m}}(t,x,y),v \rangle_{T_x M}
=\Ee_x\left[\int_0^{\frac{t}{2}} g_m(s)\bigl\langle
U_0^{-1}v, dB_s
\bigr\rangle\;\frac{p_{\tilde M_m}\left(\frac{t}{2},X_{\frac{t}{2}}^m,y\right)}{p_{\tilde M_m}(t,x,y)}\right].
\end{split}
\end{equation}

To apply Lemma \ref{lem6.4} it remains to make moment estimates for  $\int_0^t g(s)\<v , U_0dB_s\>$.
For any $m\in \N$ large enough and $q>0$, \eqref{eq3.13} implies that condition \eqref{l4-1-0} in Lemma \ref{lem6.4} holds for the process $\Upsilon_t=\ric_{U_t}$ and we could apply \eqref{r4-1-1} and \eqref{l4-1-1} to conclude the estimates.
\end{proof}

We are now in a position to proceed to prove the gradient estimates for $\log p(t,x,y)$.

\begin{thm}\label{thm6.7}
The following statements hold.
\begin{enumerate}
\item [(1)] Suppose $x,y \in M$ and $x\notin \text{Cut}_M(y)$, then
\begin{equation}\label{eq3.14}
\lim_{t \downarrow 0}t\nabla_x\log p(t,x,y)=-\nabla_x\left(\frac{d^2(x,y)}{2}\right).
\end{equation}
Here the convergence is uniformly in $x $ on
any compact subset of $M\setminus \text{Cut}_M(y)$.

\item[(2)] Let $K$ be a compact subset of $M$. Then there exists a
positive constant $C(K)$, which may depend on $K$, such that
\begin{equation}\label{eq3.15}
\left|\nabla_x \log p(t,x,y)\right|_{T_x M}\le C(K)\left(\frac{d(x,y)}{t}+\frac{1}{\sqrt{t}}\right),\quad x,y\in K,
\ t \in (0,1].
\end{equation}
\end{enumerate}
\end{thm}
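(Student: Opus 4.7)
My plan is to transfer known short-time and asymptotic estimates for the logarithmic heat kernel on the compact Riemannian manifolds $\tilde M_m$ to the complete manifold $M$, via the quantitative comparison in Proposition \ref{prp6.6}. The key observation is that the comparison error $C(m) e^{-L/t}$ decays faster than any polynomial in $t$ as $t \downarrow 0$, so for a prescribed $L > 1$ it will be absorbed by the polynomial rates appearing on the right-hand sides of \eqref{eq3.14}--\eqref{eq3.15}.

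For part (2), I would fix a compact $K \subset M$ and choose $m$ large so that $K \subset B_o(2m-2) \subset D_{m-1}$. On the compact manifold $\tilde M_m$, the classical Hamilton-type gradient estimate for the logarithmic heat kernel (cf.\ Driver \cite{D2}) yields
$$\left|\nabla_x \log p_{\tilde M_m}(t,x,y)\right|_{T_x \tilde M_m} \le C_m \left(\frac{d_{\tilde M_m}(x,y)}{t} + \frac{1}{\sqrt{t}}\right)$$
for all $t \in (0,1]$ and $x, y \in \tilde M_m$. Since $d_{\tilde M_m}(x,y) = d(x,y)$ on $B_o(2m-2)$ by \eqref{eq3.5}, applying Proposition \ref{prp6.6} with any fixed $L > 1$ will give \eqref{eq3.15} on some interval $(0, t_0]$, using $e^{-L/t} \le 1 \le 1/\sqrt{t}$ for $t \le 1$. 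On the remaining interval $[t_0, 1]$ the heat kernel is smooth and bounded below on the compact set $K \times K$, so $|\nabla_x \log p|$ is uniformly bounded there and the estimate follows after adjusting $C(K)$.

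For part (1), I would fix $y \in M$ and a compact $\tilde K \subset M \setminus \mathrm{Cut}_M(y)$. Set $D := 2 \sup_{x \in \tilde K} d(x,y) + 1$, and choose $m$ so large that $B_o(2m-2)$ contains the closed ball $\{z \in M : d(z,y) \le D\}$ and therefore $\tilde K \cup \{y\}$. Any curve in $\tilde M_m$ from $y$ to an $x \in \tilde K$ that exits $B_o(2m-2)$ has length at least $D > d(x,y) = d_{\tilde M_m}(x,y)$, so every minimizing geodesic in $\tilde M_m$ from $y$ to $\tilde K$ stays inside $B_o(2m-2)$, where the two metrics coincide. Consequently the minimizing geodesics in $\tilde M_m$ are exactly those in $M$, and since $\tilde K \cap \mathrm{Cut}_M(y) = \emptyset$ we obtain $\tilde K \cap \mathrm{Cut}_{\tilde M_m}(y) = \emptyset$. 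On $\tilde M_m$, the classical small-time asymptotic of Bismut \cite{Bis} (see also Malliavin--Stroock \cite{MS}) yields
$$\lim_{t \downarrow 0} \sup_{x \in \tilde K}\left| t \nabla_x \log p_{\tilde M_m}(t,x,y) + \nabla_x \left(\frac{d_{\tilde M_m}^2(x,y)}{2}\right)\right|_{T_x \tilde M_m} = 0.$$
Because $d$ and $d_{\tilde M_m}$ agree on pairs in $B_o(2m-2)$, the squared-distance gradients also agree on $\tilde K$, and Proposition \ref{prp6.6} contributes an error of order $t\,C(m) e^{-L/t} \to 0$, giving \eqref{eq3.14} with uniform convergence on $\tilde K$.

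The main technical obstacle is the cut-locus matching step in part (1): a priori the cut locus of $y$ in $\tilde M_m$ is different from that in $M$, since the cut locus is a global object. The length-comparison argument above circumvents this by taking $m$ so large that every candidate minimizing geodesic is forced into the isometric region $B_o(2m-2)$; this is the principal reason why the exhaustion $\{D_m\}$ must grow beyond every fixed compact set, and why the reduction would fail for any fixed $D_m$.
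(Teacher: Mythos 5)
Your proposal is correct and follows essentially the same two-step route as the paper: decompose $t\nabla_x\log p$ into the compact-manifold term $t\nabla_x\log p_{\tilde M_m}$ plus the comparison error controlled by Proposition~\ref{prp6.6}, then invoke the known compact-manifold results (Hamilton--Driver--Hsu--Sheu for the $\frac{d}{t}+\frac{1}{\sqrt t}$ bound, Bismut--Malliavin--Stroock for the asymptotic), using $d=d_{\tilde M_m}$ on the isometric region and patching $[t_0,1]$ by compactness. One detail worth highlighting: for part (1) the paper simply asserts that $\tilde K$ lies outside $\mathrm{Cut}_{\tilde M_{m_0}}(y)$, pointing to the unproved ``in particular'' clause of Lemma~\ref{lem7.1}, whereas you supply the actual justification. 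Your length-comparison argument (take $m$ so large that $B_o(2m-2)$ contains the $d$-ball of radius $D=2\sup_{x\in\tilde K}d(x,y)+1$ about $y$, so every $\tilde M_m$-minimizing geodesic from $y$ to $\tilde K$ is confined to the isometric region, whence the minimizing geodesics, their uniqueness, and the absence of conjugate points all transfer from $M$) is exactly the extra step needed to make that assertion rigorous, and it also explains why a fixed $D_m$ would not do. So you are not taking a genuinely different approach, but you are filling a gap the paper leaves implicit.
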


\begin{proof}
In the proof, the constant $C$ (which depends on $\tilde K$ or $K$) may change from line to line.
For every $m \in \N $ with $K\subset B_o(2m-2)\subset D_{m}$, we have
\begin{equation}\label{eq3.16}
\begin{split}
&t\nabla_x \log p(t,x,y)\\&=
t\nabla_x \log p_{\tilde M_m}(t,x,y)+\left(t\nabla_x \log p(t,x,y)-t\nabla_x \log p_{\tilde M_m}(t,x,y)\right).
\end{split}
\end{equation}
For each compact set $\tilde K \subset  M\setminus \text{Cut}_M(y)$, by \eqref{eq3.7}\emph{} we could
choose a $m_0 \in \N $ large enough such that $\tilde K \subset B_o(2m_0-2)\subset D_{m_0}$ and
\begin{equation*}
\lim_{t \downarrow 0}\sup_{x \in \tilde K}
|t\nabla_x \log p(t,x,y)-t\nabla_x \log p_{\tilde M_{m_0}}(t,x,y)|_{T_x M}=0.
\end{equation*}
At the same time, since $\tilde M_{m_0}$ is compact and
$\tilde K$ is outside of the cut locus $\text{Cut}_{\tilde M_m}(y)$, we have
\begin{equation*}
\begin{split}
&\lim_{t \downarrow 0}\sup_{x \in \tilde K}
\left|t\nabla_x \log p_{\tilde M_{m_0}}(t,x,y)
+\nabla_x \left(\frac{d^2(x,y)}{2}\right)\right|_{T_x M}\\
&=
\lim_{t \downarrow 0}\sup_{x \in \tilde K}
\left|t\nabla_x \log p_{\tilde M_{m_0}}(t,x,y)
+\nabla_x \left(\frac{d_{\tilde M_{m_0}}^2(x,y)}{2}\right)\right|_{T_x \tilde M_{m_0}}=
0.
\end{split}
\end{equation*}
In the first step, we used that $d_{\tilde M_{m_0}}(x,y)=d(x,y)$ for $x,y \in \tilde K$,
 while the second step is due to Corollary 2.29 from Malliavin and Stroock \cite{MS},
 (see also Bismut \cite{Bis} and Norris \cite{Norris}). Plugging this into \eqref{eq3.16} with $m=m_0$, then we have shown \eqref{eq3.14}.

Given a compact set $K \subset M$ and a constant $L>1$, based on \eqref{eq3.7} there exists
a sufficiently large natural number $m_0$  such that $K \subset B_o(2m_0-2)\subset D_{m_0}$ and
and $t_0\in (0,1)$ such that
\begin{equation}\label{t3-5-1}
\sup_{x,y \in K}|I^{m_0}(t,x,y)|_{T_x M}\le Ce^{-\frac{L}{t}},\ \quad \forall \; t\in (0,t_0].
\end{equation}
Since $\tilde M_{m_0}$ is compact, 
we can apply Hsu \cite[Theorem 5.5.3]{Hsu2} or  Sheu \cite{Sh}
to show that for all $x,y\in K$ and $t\in (0,1]$,
\begin{equation}\label{t3-5-2}
\begin{split}
\left|\nabla_x \log p_{\tilde M_{m_0}}\left(t,x,y\right)\right|_{T_x M}&\le
C(K)\left(\frac{d_{\tilde M_{m_0}}(x,y)}{t}+\frac{1}{\sqrt{t}}\right)\\
&=C(K)\left(\frac{d(x,y)}{t}+\frac{1}{\sqrt{t}}\right).
\end{split}
\end{equation}
Combing \eqref{t3-5-1} and \eqref{t3-5-2} into \eqref{eq3.16} with $m=m_0$ we immediately find \eqref{eq3.15}
holds for all $t \in (0,t_0]$.

Also note that for all $ x,y\in K$ and for all $\ t\in [t_0,1]$,
\begin{equation*}
\begin{split}
\left|\nabla_x \log p\left(t,x,y\right)\right|_{T_x M}\le C(K,t_0)
\le C\left(\frac{d(x,y)}{t}+\frac{1}{\sqrt{t}}\right).
\end{split}
\end{equation*}
By now we have completed the proof of \eqref{eq3.15}.
\end{proof}

{\bf Remark:}
\begin{itemize}
\item [(1)] The gradient estimate \eqref{eq3.15} was proved in \cite{S,Sh,Hsu2}  for a complete manifold  with Ricci curvature bounded from below by a constant $C_0$.
In that case, the constant $C(K)$ in \eqref{eq3.15} in uniform and only depends on $C_0$,  see also \cite{XLi}
for the case of the estimates for heat kernel associated with the Witten Laplacian operator.

\item [(2)] By carefully tracking the proof, we know the constant $C(K)$ from \eqref{eq3.15} depends only on $C_1(m_0)$, $\inf_{x\in D_{m_0}}\|{\rm Ric}_x\|$, and
$\sup_{x\in D_{m_0}}\Ee_x \int_0^{1}\left|l'_{m_0}(s)\right|^2\d s$, where $C_1(m_0)$ is the positive constant such that
\begin{equation*}
\begin{split}
|\nabla_x \log p_{\tilde M_{m_0}}(t,x,y)|_{T_x M}&\le C_1(m_0)\left(\frac{d_{\tilde M_{m_0}}(x,y)}{t}+\frac{1}{\sqrt{t}}\right)\\
&=C_1(m_0)\left(\frac{d(x,y)}{t}+\frac{1}{\sqrt{t}}\right).
\end{split}
\end{equation*}

\end{itemize}

\subsection{Proof of Theorem \ref{thm3.1} and the main theorem:  Hessian estimates}
\label{proof-second-order}
Now we can prove the claim for the second order gradient of logarithmic heat kernel.
In Proposition \ref{prp4.5}, we have established a second order gradient formula for $P_t f$ on a compact manifold.
In its proof we exchanged the differential and the integral operators several times, which may not
hold if $M$ is not compact. So it is not trivial to extend  Proposition \ref{prp4.5} to a non-compact manifold.

To prove Theorem \ref{thm3.1}, we begin with comparing the terms in $\nabla^2 P_t^k$ and $\nabla^2P_t$.

\begin{lem}\label{lemma-I}
Given a point $x\in M$ and a vector $v\in T_x M$, suppose that $m$ is sufficiently large so $x\in D_m$ and $k>m$. Let  $\{U_t^k\}_{t\ge 0}$ be the horizontal Brownian motion on $\tilde M_k$  as defined in (\ref{e4-1}).  Set $X_t^k=\pi(U_t^k)$ and $P_t^k f(x)=\Ee_x\left[f(X_t^k)\right]$.
Let $h(s)=\left(\frac{t-2s}{t}\right)^+ \cdot l_m(s,X_\cdot)
\cdot U_0^{-1}v$ and define
\begin{equation}\label{p4-1-2}
I \left(\frac{t}{2},X_{\cdot},v\right)
:=\left(\int_0^{\frac{t}{2}}\langle \Theta^h_s, \d B_s\rangle\right)^2-
\int_0^{\frac{t}{2}} \langle \Lambda^h_s, \d B_s\rangle-\int_0^{\frac{t}{2}} \left|\Theta_s^h\right|^2 \d s.
\end{equation}

Let $I\bigl(\frac{t}{2},X_{\cdot}^k,v\bigr)$ be defined with the corresponding terms in $\tilde M_k$.
Then we have
\begin{equation}\label{p4-1-2a}
I\left(\frac{t}{2},X_{\cdot}^k,v\right)=I\left(\frac{t}{2},X_{\cdot},v\right)
=\left(\int_0^t\langle \Theta_s^h, \d B_s\rangle\right)^2-\int_0^t \langle \Lambda_s^h, \d B_s\rangle-\int_0^t \left|\Theta_s^h\right|^2 \d s.
\end{equation}
Furthermore it holds that
\begin{equation}\label{p4-1-4}
\sup_{x\in D_m, v\in T_x M, |v|=1}\Ee_x\left[\left|I\left(\frac{t}{2},X_{\cdot},v\right)\right|^2\right]<\infty,\quad \ \forall\ t>0.
\end{equation}
\end{lem}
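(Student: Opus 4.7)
The plan is to prove the three assertions of Lemma \ref{lemma-I} by exploiting two structural features of the localised vector field $h$: its spatial support is confined to $\{s\le\tau_m\}$ by the cut-off $l_m$, and its temporal support is confined to $[0,t/2]$ by the factor $((t-2s)/t)^+$. These two observations respectively give the first and second equalities of \eqref{p4-1-2a}, while quantitative moment bounds on $l_m'$ and the boundedness of curvature on $\overline{D_m}$ deliver \eqref{p4-1-4}.

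For the first equality in \eqref{p4-1-2a}, I would invoke \eqref{e4-2} to note that $X_s=X_s^k$ and hence $U_s=U_s^k$ on $[0,\tau_m]=[0,\tau_m^k]$, and that the curvatures of $M$ and $\tilde M_k$ agree on $D_m$ because $D_m$ is isometrically embedded in $\tilde M_k$. By Lemma \ref{lem5.1}(1), $l_m(s,X_\cdot)=l_m(s,X_\cdot^k)=0$ for $s>\tau_m$, so $h$ and thus $\Gamma^h$, $\Theta^h$, $\Lambda^h$ depend only on $\{U_s:s\le\tau_m\}$; consequently they agree pathwise whether computed on $M$ or on $\tilde M_k$, which gives $I(t/2,X_\cdot,v)=I(t/2,X_\cdot^k,v)$. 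For the second equality, observe that $((t-2s)/t)^+=0$ for $s\ge t/2$, forcing $h(s)\equiv 0$ on $[t/2,t]$ and, by direct differentiation, $h'(s)=0$ a.e.\ on the same interval. Since every term of $\Theta^h_s$ and $\Lambda^h_s$ carries a factor of $h(s)$ or $h'(s)$, both processes vanish on $(t/2,t]$, so the upper limit of integration can be extended from $t/2$ to $t$ without changing the integrals or the quadratic variation term.

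For the moment bound \eqref{p4-1-4}, the strategy is to apply the Burkholder--Davis--Gundy inequality to each stochastic integral and reduce everything to integrated bounds. Since $h$ is supported on $\{s\le\tau_m\}$ and $D_m$ is relatively compact, the tensors $\mathrm{R}_{U_s}$, $\ric_{U_s}$ and $\nabla\Ric^\sharp_{X_s}$ are uniformly bounded by a constant $C(m)$ on the support of $h$. Combined with $|h(s)|\le|v|=1$ and
\begin{equation*}
|h'(s)|\le \tfrac{2}{t}+|l_m'(s,X_\cdot)|\cdot\1_{\{s\le\tau_m\}},
\end{equation*}
this yields pointwise estimates for $\Theta^h_s$, while Burkholder's inequality applied to $\Gamma^h_s=\int_0^s{\rm R}_{U_r}(\circ\d B_r,h(r))$ gives $\sup_{s\le 1}\Ee_x[|\Gamma^h_s|^{2k}]\le C(m,k)$ for every $k$. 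Expanding $|\Lambda^h_s|^2$ as a sum of products of these bounded pieces and using the moment bound \eqref{eq2.4} of Lemma \ref{lem5.1} with $k=2$ and $k=4$ yields $\Ee_x[(\int_0^{t/2}|\Theta^h_s|^2\d s)^2]+\Ee_x[\int_0^{t/2}|\Lambda^h_s|^2\d s]<\infty$ uniformly in $x\in D_m$ and unit $v\in T_xM$; a final BDG pass on the two stochastic integrals in \eqref{p4-1-2} completes the bound. The main technical point requiring care is the Stratonovich integral $\Gamma^h_s$, whose Itô correction (proportional to $\mathrm{tr}\,{\rm R}_{U_s}(\cdot,h(s))$-type terms) must be absorbed into the curvature bound on $D_m$; with that in hand the rest of the estimate is routine.
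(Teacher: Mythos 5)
Your proposal is correct and follows essentially the same route as the paper: use \eqref{e4-2} together with the support of $l_m$ to identify the $\tilde M_k$- and $M$-quantities pathwise, note that the factor $((t-2s)/t)^+$ kills $\Theta^h$ and $\Lambda^h$ on $(t/2,t]$ to extend the integration range, and derive the moment bound from \eqref{eq2.4} and the boundedness of curvature on $\overline{D_m}$. The paper states the moment-bound step very tersely (referring back to the argument for \eqref{eq3.13}); your version simply fills in the routine BDG details, including the correct observation that the Stratonovich-to-Itô correction in $\Gamma^h_s$ is controlled by curvature derivatives on the compact $\overline{D_m}$.
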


\begin{proof}
Let $\Theta_s^{h,k}$, $\Gamma_s^{h,k}$, $\Lambda_s^{h,k}$ be the corresponding terms of $\Theta_s^h$,
$\Gamma_s^h$, $\Lambda_s^h$ defined on
$\tilde M_k$. By \eqref{add1} we have
\begin{equation}\label{p4-1-3}
\Theta_s^{h,k}=h'(s)+\ric^{\tilde M_k}_{U_s^k}(h(s))=h'(s)+\ric_{U_s}(h(s))=\Theta_s^h,\quad \forall\ k>m.
\end{equation}

Still based on \eqref{e3-2}, \eqref{e3-2a} and the same arguments for \eqref{p4-1-3} we can obtain that
\begin{equation*}
\Gamma_s^{h,k}=\Gamma_s^h,\quad \Lambda_s^{h,k}=\Lambda_s^h,\qquad \forall\ k>m.
\end{equation*}
Therefore the term $I\left(\frac{t}{2},X_{\cdot}^k,v\right)$ in \eqref{p4-1-2} is independent of $k$
and the required identity \eqref{p4-1-2a} holds.
Finally, (\ref{p4-1-4}) immediately follows from  the moment estimates \eqref{eq2.4} for $l_m'$
 and the same arguments for \eqref{eq3.13}.
\end{proof}

We can now begin the
\ \newline\emph{Proof of Theorem \ref{thm3.1}.} The idea of the proof is similar to that of Lemma \ref{lem6.5}.
For convenience of the reader, here we provide a detailed proof.
Let $m_0\in \N$ satisfy that $x\in D_{m_0+1}$, then for every $k>m>m_0$ it holds that
 $B_o(2m-2)\subset D_m \subset D_k$. Let $h(s)=\left(\frac{t-2s}{t}\right)^+ \cdot l_m(s,X_\cdot)\cdot U_0^{-1}v=\left(\frac{t-2s}{t}\right)^+ \cdot l_m(s,X_\cdot^k)\cdot U_0^{-1}v$.
We can apply \eqref{p3-1-1} in Proposition \ref{prp4.5}
to the compact manifold $\tilde M_k$ to obtain that for every $k>m$,
\begin{equation}\label{p4-1-2-d}
\begin{split}
&\quad \left\langle \nabla^2 P_t^k f(x), v\otimes v\right\rangle_{T_x M \otimes T_x M}\\
&=\EE_x\left[f(X_t^k)\left(
\left(\int_0^t\langle \Theta_s^{h,k}, \d B_s\rangle\right)^2-
\int_0^t \langle \Lambda^{h,k}_s, \d B_s\rangle-\int_0^t \left|\Theta^{h,k}_s\right|^2 \d s\right)\right]\\
&=\Ee_x\left[f(X_t^k)I\left(\frac{t}{2},X_{\cdot}^k,v\right)\right]=
\Ee_x\left[f(X_t^k)I\left(\frac{t}{2},X_{\cdot},v\right)\right],
\end{split}
\end{equation}
where the process $\Theta_s^{h,k}$, $\Lambda_s^{h,k}$ are defined by \eqref{e3-2}, \eqref{e3-2a} on $\tilde M_k$,
and in the last step
we have applied \eqref{p4-1-2a}.

According to \eqref{p4-1-2} and integration by parts formula (on compact manifold $\tilde M_k$), for any $\psi \in C_c^{\infty}(M)$, $V \in C_c^{\infty}(M;TM)$ with
$\supp{\psi}\cup\supp{V}\subset D_m$ we have
\begin{equation}\label{eq3.26}
\begin{split}
&\int_{M} \Ee_x\left[f\left(X_t^{k}\right)I\left(\frac{t}{2},X_{\cdot},V(x)\right)\right]\psi(x)\d x
\\&=\int_{M} \left\langle \nabla^2 P_t^{k} f(x), V(x)\otimes V(x)\right\rangle_{T_x M \otimes T_x M}\psi(x)\d x\\
&=\int_{\tilde M_k} \left\langle \nabla^2 P_t^{k} f(x), V(x)\otimes V(x)\right\rangle_{T_x M \otimes T_x M}\psi(x)\d x\\
&=\int_{\tilde M_k} \Ee_x\left[f\left(X_t^{k}\right)\right]\Psi(\psi,V)(x)\d x=\int_{M}
\Ee_x\left[f\left(X_t^{k}\right)\right]\Psi(\psi,V)(x)\d x.
\end{split}
\end{equation}
Here we denote the gradient operator and Riemannian volume measure on both $M$ and $\tilde M_k$ by
$\nabla$ and $\d x$, and we set
\begin{equation*}
\begin{split}
\Psi(\psi,V)(x):=&\div(\div(V\psi) V)(x)+\div(\psi\nabla_V V)(x)\\
=&\psi(x)\left(\div\left(\nabla_V V\right)+\left(\div V\right)^2+
\left\langle V, \nabla \div V\right\rangle_{T_x M}\right)(x)\\
&+2\left\langle \nabla \psi, \nabla_V V+(\div V) V\right\rangle_{T_x M}(x)+
\left\langle \nabla^2 \psi(x), V(x)\otimes V(x)\right\rangle_{T_xM \otimes T_x M}.\\
\end{split}
\end{equation*}
The second and last step above follow from the properties that Riemannian volume measure $\d x$ and
the second order gradient operator
$\nabla^2$ on $M$ are the same as that on $\tilde M_k$,
when they are restricted on $D_m$, the third equality is due to the integration
by parts formula.
Meanwhile note that $X_t=X_t^k$ if $t<\tau_k$, for every $x\in D_m$ it holds
\begin{equation*}
\begin{split}
&\lim_{k \rightarrow \infty}
\left|\Ee_x\left[f(X_t^{k})I\left(\frac{t}{2},X_{\cdot},V(x)\right)\right]
-\Ee_x\left[f(X_t)I\left(\frac{t}{2},X_{\cdot},V(x)\right)\1_{\{t<\zeta\}}\right]\right|\\
&\le \lim_{k \rightarrow \infty}
\Ee_x\left[\left|f(X_t^{k})-f(X_t)\1_{\{t<\zeta\}}\right|
\left|I\left(\f t 2,X_{\cdot},V(x)\right)\right|\right]\\
&\le \lim_{k \rightarrow \infty}\sqrt{\Ee_x\left[\left|f(X_t^{k})-f(X_t)\1_{\{t<\zeta\}}\right|^2\right]}
\sqrt{\Ee_x\left[\left|I\left(\f t 2,X_{\cdot},V(x)\right)\right|^2\right]}\\
&\le \lim_{k \rightarrow \infty}\sqrt{2}C\|f\|_{\infty}\sqrt{\Pp_x\left(\tau_k\le t<\zeta\right)}=0,
\end{split}
\end{equation*}
where the last inequality is due to \eqref{p4-1-4}.

Putting this into \eqref{eq3.26}, letting $k \rightarrow \infty$ we see that
for every $\psi \in C_c^{\infty}(M)$ and $V \in C^{\infty}(M;TM)$ with
$\supp{\psi}\cup\supp{V}\subset D_m$,
\begin{equation*}
\begin{split}
& \int_{D_m} \Ee_x\left[f\left(X_t\right)I\left(\frac{t}{2},X_{\cdot},V(x)\right)\1_{\{t<\zeta\}}\right]\psi(x)\d x
=\int_{D_m} \Ee_x\left[f\left(X_t\right)\1_{\{t<\zeta\}}\right]\Psi(\psi,V)(x)\d x,
\end{split}
\end{equation*}
which implies the weak (distributional) second order gradient
$\nabla^2 P_t f$ exists on $D_m$ and
\begin{equation}\label{eq3.27}
\begin{split}
\left\langle \nabla^2 P_t f(x), v\otimes v\right\rangle_{T_x M \otimes T_x M}=\Ee_x\left[f\left(X_t\right)
I\left(\frac{t}{2},X_{\cdot},v\right)\1_{\{t<\zeta\}}\right],\ \ x\in D_m, v \in T_x M.
\end{split}
\end{equation}
As shown by Lemma \ref{lem7.2} in Appendix, the functional $x \mapsto \Ee_x\left[f\left(X_t\right)
I\left(\frac{t}{2},X_{\cdot},V(x)\right)\1_{\{t<\zeta\}}\right]$ is continuous.
Now the distributional derivative $\nabla^2 P_t f$ exists and is continuous, then $\nabla^2 P_t f$ is
the classical second order gradient on $D_m$ and expression \eqref{t3-1-1} holds.

\begin{prp}\label{prp6.9}
Suppose that  $K$ is a compact subset of $M$ and $L>1$ is a positive constant.
Then, for any sufficiently large $m$, we could find a
$t_0(K,L,m)$ such that for any $t\in (0,t_0]$,
\begin{equation}\label{eq3.17}
\sup_{x,y\in K}e^{\frac{L}{t}}\left|t\nabla_x^2\log p(t,x,y)-t \nabla_x^2 \log p_{\tilde M_{m}}(t,x,y)
\right|_{T_x M\otimes T_x M}\le C(m)e^{-\frac{L}{t}}
\end{equation}
where $C(m)$ is a positive constant which may depend on $m$.
\end{prp}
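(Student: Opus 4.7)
The plan mirrors the proof of Proposition \ref{prp6.6}, now using the Hessian formula of Theorem \ref{thm3.1} in place of the first-order formula of Lemma \ref{lem6.5}. Fix a compact $K\subset M$, $x,y\in K$, a unit vector $v\in T_xM$, and choose $m$ large enough so that $K\subset B_o(2m-2)\subset D_m$. With the cut-off vector field $h(s)=\bigl(\frac{t-2s}{t}\bigr)^+ l_m(s,X_\cdot)U_0^{-1}v$, every term in $\Theta_s^h$ and $\Lambda_s^h$ vanishes for $s>t/2$, so $I(t/2,X_\cdot,v)$ from \eqref{p4-1-2} is $\mathscr{F}_{t/2}$-measurable. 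Applying Theorem \ref{thm3.1} to $f\in C_c(M)$ and conditioning at time $t/2$ (the same argument that passes from \eqref{p3-1-0} to \eqref{e3-8}, with continuity verified via Lemma \ref{lem7.2} to promote the a.e.\ in $y$ identity to a pointwise one), one obtains
\begin{equation*}
\Bigl\langle \frac{\nabla_x^2 p(t,x,y)}{p(t,x,y)},v\otimes v\Bigr\rangle = \mathbb{E}_x\Bigl[I\bigl(\tfrac{t}{2},X_\cdot,v\bigr)\,\frac{p(t/2,X_{t/2},y)}{p(t,x,y)}\mathbf{1}_{\{t/2<\zeta\}}\Bigr].
\end{equation*}
The analogous formula on $\tilde M_m$ comes from Proposition \ref{prp4.5}, and Lemma \ref{lemma-I} gives $I(t/2,X_\cdot,v)=I(t/2,X_\cdot^m,v)$ pathwise. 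The comparison therefore reduces to
\begin{equation*}
\mathbb{E}_x\Bigl[I\bigl(\tfrac{t}{2},X_\cdot,v\bigr)\Bigl(\frac{p(t/2,X_{t/2},y)}{p(t,x,y)}\mathbf{1}_{\{t/2<\zeta\}}-\frac{p_{\tilde M_m}(t/2,X_{t/2}^m,y)}{p_{\tilde M_m}(t,x,y)}\Bigr)\Bigr].
\end{equation*}

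Decompose $I=T_1-T_2-T_3$ with $T_1=\bigl(\int_0^{t/2}\langle \Theta_s^h,dB_s\rangle\bigr)^2$, $T_2=\int_0^{t/2}\langle \Lambda_s^h,dB_s\rangle$ and $T_3=\int_0^{t/2}|\Theta_s^h|^2\,ds$. The explicit formulas \eqref{e3-2}--\eqref{e3-2a} show that every stochastic or Lebesgue integral appearing in $T_1,T_2,T_3$ has integrand of the form $\Upsilon_r l_m(r)$ or $\Upsilon_r l_m'(r)$, where $\Upsilon_r$ depends only on $\mathrm{ric}_{U_r}$, $\nabla\mathrm{Ric}_{X_r}$, $\mathrm{R}_{U_r}$, and $\Gamma_r^h$ — all bounded on $\bar D_m$ with uniform $L^q$ moments via \eqref{eq2.4}, \eqref{eq3.13}, and the Burkholder--Davis--Gundy inequality applied to $\Gamma^h$. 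Hence the hypothesis \eqref{l4-1-0} of Lemma \ref{lem6.4} is met. Apply Lemma \ref{lem6.4} with $q=2$ to the diagonal squares inside $T_1$ and Remark \ref{rem6.1} to the linear pieces in $T_2,T_3$; handle the cross terms in the expansion of $T_1=(A_1+A_2+A_3)^2$ via the polarisation $2A_iA_j=(A_i+A_j)^2-A_i^2-A_j^2$ (reducing same-type cross terms to the $q=2$ template, while genuinely mixed $l_m$--$l_m'$ cross terms are controlled by repeating the splitting strategy of Lemma \ref{lem6.4} on $\{t\le\tau_{m_0}\}$ and its complement, using \eqref{l4-1-7}, \eqref{l4-1-5}, and Cauchy--Schwarz). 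Upon enlarging $L$ to a slightly larger exponent $L'>L$, this yields
\begin{equation*}
\sup_{x,y\in K}\Bigl|\frac{\nabla_x^2 p(t,x,y)}{p(t,x,y)}-\frac{\nabla_x^2 p_{\tilde M_m}(t,x,y)}{p_{\tilde M_m}(t,x,y)}\Bigr|\le C(m)e^{-2L'/t},\quad t\in(0,t_0(K,L,m)].
\end{equation*}

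Finally, pass from $\nabla^2 p/p$ to $\nabla^2\log p$ via $\nabla_x^2\log p=\nabla_x^2 p/p-\nabla_x\log p\otimes\nabla_x\log p$ together with the algebraic identity $a\otimes a-b\otimes b=(a-b)\otimes a+b\otimes(a-b)$. Proposition \ref{prp6.6} applied with exponent $L'$ gives $|\nabla_x\log p-\nabla_x\log p_{\tilde M_m}|\le C(m)e^{-L'/t}$, while Theorem \ref{thm6.7} (on $M$ and on the compact manifold $\tilde M_m$) provides $|\nabla_x\log p|,|\nabla_x\log p_{\tilde M_m}|\le C(K)/t$ for $x,y\in K$; combined these yield a quadratic-tail bound of $C(m)e^{-L'/t}/t$, so that $t\bigl|\nabla_x^2\log p-\nabla_x^2\log p_{\tilde M_m}\bigr|\le C(m)te^{-2L'/t}+C(m)e^{-L'/t}$. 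Choosing $L'=L+\epsilon$ and absorbing the polynomial factors into the exponential for small $t$ produces \eqref{eq3.17}. The main technical obstacle is the control of mixed cross terms in the square $T_1$: they do not fit the template of Lemma \ref{lem6.4}, but the $\{t\le\tau_{m_0}\}$ splitting in the proof of that lemma — on which $X_\cdot$ and $X_\cdot^m$ coincide and the kernel-ratio difference \eqref{l4-1-7} is exponentially small, while on the complement the exit probability is itself exponentially small — combined with Burkholder--Davis--Gundy, extends the comparison to the required products of integrals.
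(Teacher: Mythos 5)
Your proof is correct and follows essentially the same route as the paper: apply the Hessian formula of Theorem \ref{thm3.1}, condition at time $t/2$ to obtain the ratio representations for $\nabla^2 p/p$ on $M$ and $\tilde M_m$, compare them via Lemma \ref{lem6.4} and Remark \ref{rem6.1}, then convert to $\nabla^2\log p$ with the identity $\nabla^2\log p = \nabla^2 p/p - (\nabla\log p)^{\otimes 2}$ together with Proposition \ref{prp6.6}. The paper compresses the comparison of $I(t/2,X_\cdot,v)$ into ``following the same arguments as Proposition \ref{prp6.6},'' whereas you correctly flag and handle the mixed $l_m$/$l_m'$ cross terms in $\bigl(\int\langle\Theta^h_s,\d B_s\rangle\bigr)^2$ that do not literally match the template of Lemma \ref{lem6.4}, and you also supply the gradient bound from Theorem \ref{thm6.7} needed to control $(\nabla\log p)^{\otimes 2}-(\nabla\log p_{\tilde M_m})^{\otimes 2}$ --- both details the paper leaves implicit.
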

\begin{proof}

Let us fix $x,y \in K$ and a unit vector $v \in T_x M$ with $|v|=1$. Suppose that $m \in \N $ such that $K \subset B_o(2m-2)\subset D_m$.
Then by \eqref{t3-1-1} we have
\begin{equation*}
\left\langle \nabla^2 P_t f(x), v\otimes v\right\rangle_{T_x M \otimes T_x M}=\Ee_x\left[f\left(X_t\right)
I\left(\frac{t}{2},X_{\cdot},v\right)\1_{\{t<\zeta\}}\right],
\end{equation*}
where $I\left(\frac{t}{2},X_{\cdot},v\right)$ is defined by \eqref{p4-1-2} with
$h(s):=\Big(\frac{t-2s}{t}\Big)^+\cdot l_m\left(s,X_{\cdot}\right)\cdot U_0^{-1}v$.

By this representation and following the same arguments of \eqref{e3-8} and \eqref{e3-9} we obtain
\begin{equation*}
\begin{split}
& \frac{\left\langle \nabla^2_x p(t,x,y), v\otimes v\right\rangle_{T_x M \otimes T_x M}}{p(t,x,y)}=
\Ee_x\left[I\left(\frac{t}{2},X_{\cdot},v\right)
\frac{p\left(\frac{t}{2},X_{\frac{t}{2}},y\right)}{p(t,x,y)} \right],\\
& \frac{\left\langle \nabla^2_x p_{\tilde M_m}(t,x,y), v\otimes v\right\rangle_{T_x M\otimes T_x M}}{p_{\tilde M_m}(t,x,y)}=
\Ee_x\left[I\left(\frac{t}{2},X_{\cdot},v\right)
\frac{p_{\tilde M_m}\left(\frac{t}{2},X_{\frac{t}{2}}^m,y\right)}{p_{\tilde M_m}(t,x,y)}\right].
\end{split}
\end{equation*}
Based on above expression and following the same arguments in the proof of Proposition \ref{prp6.6} (especially applying \eqref{l4-1-1} and
\eqref{r4-1-1}--\eqref{r4-1-3}) we could find a $m_0(K,L)\in \N$ such that
for all $m\ge m_0$, there exists a $t_0(K,L,m)>0$ such that
\begin{equation}\label{p4-3-1}
\sup_{x,y\in K}\left|\frac{\nabla^2_x p(t,x,y)}{p(t,x,y)}-\frac{\nabla^2_x p_{\tilde M_m}(t,x,y)}{p_{\tilde M_m}(t,x,y)}\right|_{T_x M \otimes T_x M}
\le C(m)e^{-\frac{L}{t}},\quad \ t\in (0,t_0].
\end{equation}

Meanwhile we have
\begin{equation*}
\begin{split}
&\left\langle\nabla^2_x \log p(t,x,y),v\otimes v\right\rangle_{T_x M \otimes T_x M}
=\frac{\left\langle \nabla_x^2 p(t,x,y),  v\otimes v\right\rangle_{T_x M \otimes T_x M}}{p(t,x,y)}
-\left(\frac{\langle \nabla_x p(t,x,y),v\rangle_{T_x M}}{p(t,x,y)}\right)^2,\\
\end{split}
\end{equation*}
and the similar expression holds for  $\left\langle\nabla^2_x \log p_{\tilde M_m}(t,x,y), v\otimes v\right\rangle_{T_x M \otimes T_x M}$. Together with \eqref{eq3.7} and \eqref{p4-3-1}, this yields \eqref{eq3.17} and concludes the proof.
\end{proof}

With \eqref{eq3.17} we are in the position to prove the second part of the main theorem on the short time and asymptotic second order gradient estimates.

\begin{thm}\label{thm6.10}
The following statements hold.
\begin{enumerate}
\item [(1)] Suppose $y \in M$ and $\tilde K \subset M\setminus \text{Cut}_M(y)$ is a compact set, then
\begin{equation}\label{eq3.28}
\lim_{t \downarrow 0}\sup_{x \in \tilde K}
\left|t\nabla_x^2\log p(t,x,y)+\nabla_x^2\left(\frac{d^2(x,y)}{2}\right)
\right|_{T_x M \otimes T_x M}=0.
\end{equation}

\item[(2)]  For each $y\in M$ and $\delta<i(y)$ there exist positive constants
$t_0$ and $C_1$ such that
\begin{equation}\label{eq3.29}\aligned
&\left|t\nabla_x^2 \log p(t,x,y)+\textbf{I}_{T_x M}\right|_{T_x M \otimes T_x M}\\
&\le C_1\left(d(x,y)+
\sqrt{t}\right),\quad \quad \ x\in B_y(\delta),\ t \in (0,t_0],\endaligned
\end{equation}
where $\textbf{I}_{T_x M}$ is the identical map on $T_x M$.

\item[(3)] Suppose $K \subset M$ is a compact subset of $M$, then there exists a
positive constant $C_2(K)$, such that
\begin{equation}\label{eq3.30}
\left|\nabla_x^2 \log p(t,x,y)\right|_{T_x M \otimes T_x M}\le C_2\left(\frac{d^2(x,y)}{t^2}+\frac{1}{t}\right),\quad  x,y\in K,
\ t \in (0,1].
\end{equation}
\end{enumerate}
\end{thm}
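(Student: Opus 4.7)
\textbf{Proof plan for Theorem \ref{thm6.10}.}
The overall strategy mirrors that of Theorem \ref{thm6.7}: decompose
\begin{equation*}
t\nabla_x^2 \log p(t,x,y) = t\nabla_x^2 \log p_{\tilde M_m}(t,x,y) + R_m(t,x,y),
\end{equation*}
where $R_m(t,x,y) := t\nabla_x^2 \log p(t,x,y) - t\nabla_x^2 \log p_{\tilde M_m}(t,x,y)$. By Proposition \ref{prp6.9}, choosing any prescribed $L>1$ and then $m$ large enough (depending on the relevant compact set and $L$), one has $|R_m(t,x,y)| \le C(m)e^{-2L/t}$ for $t \in (0,t_0]$ and $x,y$ in the given compact set; this remainder decays faster than any polynomial in $1/t$ and can be absorbed harmlessly. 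Thus each claim reduces to establishing the corresponding estimate for $\nabla_x^2 \log p_{\tilde M_m}$ on the \emph{compact} Riemannian manifold $\tilde M_m$, where classical results apply.

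\textbf{(1) The asymptotic estimate \eqref{eq3.28}.} Fix $y\in M$ and a compact $\tilde K\subset M\setminus\mathrm{Cut}_M(y)$. Choose $m$ so large that $\tilde K\cup\{y\}\subset B_o(2m-2)\subset D_m\subset \tilde M_m$. Since $d_{\tilde M_m}(x,y)=d(x,y)$ for $x\in \tilde K$ by \eqref{eq3.5}, a minimizing $M$-geodesic from $y$ to $x\in\tilde K$ lies in $D_m$ (for $m$ large) and is simultaneously a minimizing $\tilde M_m$-geodesic; hence $\tilde K\subset\tilde M_m\setminus\mathrm{Cut}_{\tilde M_m}(y)$. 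Applying the classical Malliavin--Stroock expansion \cite[Cor.~2.29]{MS} to the compact manifold $\tilde M_m$ yields
\begin{equation*}
\lim_{t\downarrow 0}\sup_{x\in\tilde K}\left|t\nabla_x^2\log p_{\tilde M_m}(t,x,y)+\nabla_x^2\left(\tfrac{d^2(x,y)}{2}\right)\right|_{T_xM\otimes T_xM}=0,
\end{equation*}
and the triangle inequality with the exponential bound on $R_m$ yields \eqref{eq3.28}.

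\textbf{(3) The Hessian bound \eqref{eq3.30}.} Given compact $K\subset M$, pick $m$ with $K\subset B_o(2m-2)$. On the compact manifold $\tilde M_m$, the second order gradient estimate of Hsu \cite{HsuEstimates} (or Sheu \cite{Sh}) gives
\begin{equation*}
\sup_{x,y\in K}\left|\nabla_x^2\log p_{\tilde M_m}(t,x,y)\right|_{T_xM\otimes T_xM}\le C(K,m)\left(\tfrac{d^2(x,y)}{t^2}+\tfrac{1}{t}\right),\quad t\in(0,1],
\end{equation*}
using $d_{\tilde M_m}(x,y)=d(x,y)$ on $K$. By Proposition \ref{prp6.9} with $L=2$, we obtain $|R_m(t,x,y)|/t\le C(m)e^{-4/t}/t$ on $(0,t_0]$, which is at most $C'(m)/t$. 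For $t\in[t_0,1]$, continuity of $\nabla^2\log p$ on the compact set $K\times K\times[t_0,1]$ absorbs any bounded residual into $C_2(K)$, yielding \eqref{eq3.30}.

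\textbf{(2) The refined near-diagonal estimate \eqref{eq3.29}.} Fix $y\in M$ and compact $K\subset M$; pick $m$ with $K\cup\{y\}\subset B_o(2m-2)$. On the compact $\tilde M_m$, a quantitative version of the Malliavin--Stroock expansion (their method gives a short-time error $O(\sqrt{t})$ uniformly on $K$, using the Minakshisundaram--Pleijel parametrix) combined with the Taylor expansion $\nabla_x^2(d^2(x,y)/2)=I_{T_xM}+O(d(x,y))$ at the diagonal (a consequence of the Gauss lemma and Jacobi field analysis) delivers
\begin{equation*}
\left|t\nabla_x^2\log p_{\tilde M_m}(t,x,y)+I_{T_xM}\right|_{T_xM\otimes T_xM}\le C(y,K,m)\bigl(d(x,y)+\sqrt{t}\bigr),\quad t\in(0,t_0(y,K,m)],\ x\in K.
\end{equation*}
Adding the exponentially small $R_m$ term from Proposition \ref{prp6.9} and renaming constants gives \eqref{eq3.29}.

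\textbf{Main obstacle.} The conceptual step -- transferring from $M$ to $\tilde M_m$ modulo an $e^{-L/t}$ remainder -- is already provided by Proposition \ref{prp6.9}. The substantive work is therefore in citing (or deriving) the correct quantitative rate on the compact model in part (2): mere convergence of $t\nabla^2\log p_{\tilde M_m}$ to $-\nabla^2(d^2/2)$ from part (1) is insufficient, and one needs the $O(\sqrt{t})$ rate together with the near-diagonal expansion $\nabla^2(d^2(\cdot,y)/2)=I+O(d)$. The second, more technical point is to justify that $\tilde K$ (respectively $K\cup\{y\}$) remains outside $\mathrm{Cut}_{\tilde M_m}(y)$ for large $m$; this is guaranteed by taking $m$ large so that every minimizing geodesic between the points involved lies strictly inside $D_{m-1}$, hence is still minimizing after the isometric embedding $D_m\hookrightarrow\tilde M_m$ of Lemma \ref{lem7.1}.
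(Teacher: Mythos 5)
Your proposal follows the same route as the paper: decompose the Hessian into the compact-model term $\nabla^2\log p_{\tilde M_m}$ plus the $e^{-L/t}$-small remainder supplied by Proposition \ref{prp6.9}, invoke the known compact-manifold results, and finish with the triangle inequality (and continuity/compactness for $t\in[t_0,1]$). The paper's actual proof is a one-liner doing exactly this: it cites Malliavin--Stroock \cite[Cor.~2.29]{MS} for (1), Gong--Ma \cite[Thm.~3.1]{GM} for the quantitative near-diagonal rate in (2), and Stroock \cite{S} or Sheu \cite{Sh} for (3), and then applies \eqref{eq3.17} exactly as Theorem \ref{thm6.7} applied \eqref{eq3.7}.

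The one place where you paraphrase rather than cite is part (2): instead of quoting Gong--Ma \cite[Thm.~3.1]{GM}, you gesture at ``a quantitative version of the Malliavin--Stroock expansion'' giving an $O(\sqrt t)$ rate plus the diagonal Taylor expansion $\nabla_x^2(d^2/2)=\textbf{I}+O(d)$ (in fact one has the sharper $O(d^2)$ from the Jacobi-field expansion, but $O(d)$ suffices on a compact set). That is the content of the cited Gong--Ma theorem, so the logic is sound, but in a written-up proof you should replace the sketch by the citation the paper uses, since the $O(\sqrt t)$ uniformity is exactly what is being borrowed and is not obvious from \cite[Cor.~2.29]{MS} alone. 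Your added remarks --- that $\tilde K$ stays outside $\text{Cut}_{\tilde M_m}(y)$ once $m$ is large enough for every minimizing $M$-geodesic between the relevant points to lie in $D_{m-1}$, and that the interval $[t_0,1]$ is handled by continuity on a compact set --- are correct and make explicit what the paper leaves implicit when it says ``following the same procedure as in the proof of Theorem \ref{thm6.7}.''
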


\begin{proof} 
By Malliavin and Stroock \cite[Corollary 2.29]{MS},  Gong and Ma \cite[Theorem 3.1]{GM} and Stroock \cite{S} (or Sheu \cite{Sh}), we know
\eqref{eq3.28}--\eqref{eq3.30}  hold when $M$ is compact.
Then using the estimates \eqref{eq3.17} and following the same procedure  as in the proof of
Theorem \ref{thm6.7} we can verify that  \eqref{eq3.28}-\eqref{eq3.30} hold for any complete
Riemannian manifold.
\end{proof}

\begin{appendix}
\section*{Approximation procedure}\label{appn}
Let $(M,g)$ and $D_m\subset M$ be the same terms in Section \ref{cut-off}.

\begin{lem}\label{lem7.1}
For every $m \in \mathbb{Z}_+$, there exists a (smooth) compact Riemannian manifold $(\tilde M_m,\tilde g_m)$, such that
$(D_m,g)$ is
isometrically embedded into $(\tilde M_m,\tilde g_m)$ as an open set.
In particular, if $y,x \in D_m$ and $x\not \in \cut_y(M)$,  then $x\not \in \cut_y(\tilde M_m)$.
\end{lem}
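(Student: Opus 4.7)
The plan is to realise $(\tilde M_m,\tilde g_m)$ as the \emph{double} of a compact manifold with smooth boundary carved out of $M$ by a sufficiently generous sub-level set of $\hat d$, after a local metric modification near the boundary that permits a smooth doubling. The key enabling fact is that $|\nabla \hat d|\neq 0$ everywhere, which, together with the construction recalled in Section~\ref{s2-1}, ensures that each $\partial D_k$ is a smooth hypersurface; so any $\overline{D_k}$ qualifies as a compact smooth manifold with smooth boundary.

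First I would choose $m'\gg m$ so that $\dist_M(\overline{D_m},\partial D_{m'})$ comfortably exceeds $\mathrm{diam}_M(D_m)$; such $m'$ exists because $\{D_k\}$ exhausts $M$. Let $N:=\overline{D_{m'}}$. By the collar neighbourhood theorem there is a diffeomorphism $U\cong \partial N\times[0,\delta)$ onto a one-sided tubular neighbourhood of $\partial N$ in $N$, and by shrinking $\delta$ I may arrange $U\cap\overline{D_m}=\emptyset$. A standard partition-of-unity argument in the collar coordinate then yields a smooth Riemannian metric $\tilde g$ on $N$ which coincides with $g$ off $U$ (in particular on $\overline{D_m}$) and which has product form $\d s^2+h_0$ on a one-sided neighbourhood of $\partial N$, for some fixed smooth metric $h_0$ on $\partial N$. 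Taking two copies $N_\pm$ of $(N,\tilde g)$ and identifying them along the identity map of $\partial N$ then produces a smooth compact Riemannian manifold $(\tilde M_m,\tilde g_m)$ without boundary (smoothness of $\tilde g_m$ across the seam follows from the product form), and $(D_m,g)$ sits inside $N_+$ as an isometrically embedded open set.

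It remains to verify the cut locus assertion, which is the step I expect to be the most delicate. Given $y,x\in D_m$ with $x\notin\cut_y(M)$, the unique minimising $M$-geodesic $\gamma$ from $y$ to $x$ has length at most $\mathrm{diam}_M(D_m)$, so by the choice of $m'$ it stays entirely in the unmodified region $N\setminus U$, where $g$ and $\tilde g$ agree; hence $\gamma$ is also a geodesic of the same length in $\tilde M_m$, giving $d_{\tilde M_m}(y,x)\le d(x,y)$. Conversely, any $\tilde M_m$-path joining $y$ to $x$ of length at most $d(x,y)$ cannot cross $\partial N$ (leaving $N_+$ via $\partial N$ and returning would cost at least $2\dist_M(\overline{D_m},\partial N)>d(x,y)$) and cannot enter and exit $U$ (which would add more than $2\delta$), so such a path lies in the unmodified region and thus is an $M$-path of the same length. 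Uniqueness of $\gamma$ in $M$ then forces the $\tilde M_m$-minimiser to coincide with $\gamma$, and the absence of conjugate points along $\gamma$ transfers to $\tilde M_m$ because Jacobi fields along $\gamma$ are determined by the curvature tensor in an arbitrarily small tubular neighbourhood of $\gamma$, which is common to the two metrics. This confirms $x\notin\cut_y(\tilde M_m)$. The quantitative separation $\dist_M(\overline{D_m},\partial D_{m'})>\mathrm{diam}_M(D_m)$ is precisely what prevents minimising $\tilde M_m$-paths from leaking into the added copy, and is the linchpin of the whole construction.
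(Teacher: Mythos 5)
Your construction is correct but genuinely different from the paper's. The paper takes the narrow fattening $G_m=D_{m+1}$, glues two abstract copies of $\overline{G_m}$ along their common boundary via the identity of boundary charts, and then builds a \emph{new} Riemannian metric from scratch by the convex interpolation $\tilde g_m=\chi_m\,g+(1-\chi_m)\,g_m^0$, where $\chi_m$ is a cutoff equal to $1$ on $D_m$ and $g_m^0$ is an arbitrary background metric on the doubled manifold. You instead take $m'\gg m$ large enough that $\dist_M(\overline{D_m},\partial D_{m'})$ exceeds $\mathrm{diam}_M(D_m)$, modify $g$ only on a thin collar $U$ of $\partial D_{m'}$ so as to have product form there (making $\partial D_{m'}$ totally geodesic), and then double the metric itself. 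The extra payoff of your route is that it actually establishes the ``in particular'' cut-locus assertion (and the associated distance identity used later as \eqref{eq3.5}), which the paper's proof as written does not address: with $G_m=D_{m+1}$ and $g_m^0$ unconstrained, nothing prevents $\tilde M_m$ from having shorter detours outside $D_m$, so the cut-locus claim is not automatic from that construction alone. Your quantitative separation $\dist_M(\overline{D_m},\partial D_{m'})>\mathrm{diam}_M(D_m)$ is exactly the mechanism that rules this out. One small sharpening you should make explicit: ``shrinking $\delta$ so that $U\cap\overline{D_m}=\emptyset$'' is weaker than what your cut-locus argument actually uses; you should shrink $\delta$ so that $\dist_M(\overline{D_m},U)>\mathrm{diam}_M(D_m)$, since that is what guarantees a $\tilde M_m$-path from $y$ to $x$ of length at most $d(x,y)$ never reaches the modified collar. (The ``adds more than $2\delta$'' estimate for entering and exiting $U$ is beside the point --- $\delta$ may be tiny; it is the long approach to $U$ through the unmodified region that costs.)
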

\begin{proof}
Let $G_m =D_{m+1}$,  recall that $\partial G_m$ is a connected smooth $n-1$-dimensional sub-manifold of $M$.
Hence $\overline{G_m}$ is an $n$-dimensional manifold with smooth boundary,
 then there exist an atlas
of local charts $\{(V_i,\psi_i)\}_{i=1}^N$ of $\overline{G_m}$ such that
\begin{enumerate}
\item [(1)] $\bigcup_{i=1}^N V_i=\overline{G_m}$ ;

\item [(2)] For $i=1, \dots, N_1\le N$, these are charts for the interior. So $V_i\cap \partial G_m=\emptyset$ and
$\psi_i: V_i \rightarrow \mathbf{B}^n:=\{z \in \R^n; |z|< 1\}$ is a smooth diffeomorphsim for all $1\le i\le N_1$;

\item [(3)] For all $i>N_1$,
$V_i\cap \partial G_m\neq\emptyset$, $$\psi_i: V_i \rightarrow
\mathbf{B}^{n,+}:=\{z=(z_1,\dots,z_n) \in \R^n; |z|< 1,z_1\ge 0\}$$ is a smooth diffeomorphsim
and $\psi_i\big(V_i\cap \partial G_m\big)=\partial \mathbf{B}^{n,+}$.
\end{enumerate}
By the Whitney embedding theorem, we could embed $M$ into a (ambient) Euclidean space
 $\R^p$. Let $\hat G_m$ be an identical copy of $G_m$ in $\R^p$ endowed with the local charts
 $\{(\hat V_i,\hat \psi_i)\}_{i=1}^N$ (which is also an identical copy of
$\{(V_i,\psi_i)\}_{i=1}^N$). We define $h:\partial G_m \rightarrow \partial \hat G_m$
by $h(x):=\hat \psi_i^{-1}\left(\psi_i(x)\right)$, if $x \in V_i\cap \partial G_m$,
 $h$ is well defined and is a smooth diffeomorphism.

We glue the boundary of $G_m$ and $\tilde G_m$ together to get
$\tilde M_m:=(G_m\sqcup\hat G_m)/\thicksim$, where $\thicksim$ is an equivalent relation such that
$x \thicksim y$ if and only if $h(x)=y$, $x \in \partial G_m$, $y \in \partial \hat G_m$.
Then $\tilde M_m$ is a smooth compact manifold without boundary. In fact,
$\{(U_i,\phi_i)\}_{i=1}^{N+N_1}=\{(V_i,\psi_i)\}_{i=1}^{N_1}\bigcup \{(\hat V_i,\hat \psi_i)\}_{i=1}^{N_1}$ $\bigcup \{(\tilde V_i,\tilde \psi_i)\}_{i=N_1+1}^N$
is a local charts of $\tilde M_m$. Here $\tilde V_i=(V_i\sqcup\hat V_i)/\thicksim$ for every
$N_1<i\le N$ and
\begin{equation*}
\tilde \psi_i(x)=
\begin{cases}
&\psi_i(x),\ \ \ \ \ \text{if}\ \ x\in V_i,\\
&\mathbf{S}\big(\hat \psi_i(x)\big), \ \ \ \text{if}\ \ x\in \hat V_i,
\end{cases}
\end{equation*}
where $\mathbf{S}:\R^n\rightarrow \R^n$ is a map such that
$\mathbf{S}x=(-x_1,x_2,\dots,x_n)$ for all $x=(x_1,x_2,\dots,x_n)\in \R^n$.
It is easy to see $\tilde \psi_i: \tilde V_i \rightarrow \mathbf{B}^n$, $N_1<i\le N$
is a smooth diffeomorphsim, and the transition map between
different local charts on  $\{(U_i,\phi_i)\}_{i=1}^{N+N_1}$ is smooth.

We construct a smooth Riemannian metric $\tilde g_m$ on $\tilde M_m$
to ensure that $\tilde g_m(z)=g(z)$ for every $z \in D_m$.  For the open set $D_m\subset G_m\subset \tilde M_m$,
by the standard procedure (via the finite local charts) we could construct a
function $\chi_m:\tilde M_m \rightarrow [0,1]$ such that
$\supp\chi_m \subset G_m$ and $\chi_m(x)=1$ for every $x \in D_m$. Note that $G_m$ could also be viewed as
an open subset of $\tilde M_m$, so $\hat g_m(x):=g(x)\chi_m(x)$, $x \in \tilde M_m$ is well defined on $\tilde M_m$.
Fixing a smooth Riemannian metric $g_m^0$ on $\tilde M_m$, which exists, we set
\begin{equation*}
\tilde g_m(x):=g(x)\chi_m(x)+g_m^0(x)\big(1-\chi_m(x)\big),\quad x\in \tilde M_m.
\end{equation*}
It is easy to see $\tilde g_m$ is a smooth Riemannian metric on $\tilde M_m$ and
$\tilde g_m(x)=g(x)$ for each $x \in D_m$. By now we have completed the proof.
\end{proof}

Let $I(t,X_{\cdot},v)$ be as defined in \eqref{p4-1-2}.

\begin{lem}\label{lem7.2}
For every fixed $f \in C_c^{\infty}(M)$, $V \in C^{\infty}(M;TM)$ with compact supports and
$t>0$, the function
\begin{equation*}
F(x):=\Ee\left[f\left(X_t^x\right)I\left(\frac{t}{2},X_{\cdot}^{x},V(x)\right)\1_{\{t<\zeta(x)\}}\right],\quad x\in M.
\end{equation*}
is continuous.
\end{lem}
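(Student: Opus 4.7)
The plan is to realize $F$ as a uniform-on-compacts limit of a sequence of continuous functions built on the compact approximating manifolds $\tilde M_m$, and conclude by the fact that a uniform limit of continuous functions is continuous. Fix $x_0 \in M$ and choose $m_0 \in \N$ large enough that $\{x_0\} \cup \supp f \cup \supp V \subset D_{m_0-2}$; let $K$ be a compact neighborhood of $x_0$ contained in $D_{m_0-2}$. For each $m \ge m_0$ I would fix a smooth local section $U_0 : K \to O\tilde M_m$ of the orthonormal frame bundle, solve \eqref{e4-1} on $\tilde M_m$ with initial frame $U_0(x)$ to obtain $U^{m,x}_\cdot$, set $X_t^{m,x} := \pi(U_t^{m,x})$, and define the compact-manifold analogue
\[
F_m(x) := \Ee\Bigl[f\bigl(X_t^{m,x}\bigr)\, I\bigl(\tfrac{t}{2}, X^{m,x}_\cdot, V(x)\bigr)\Bigr],
\]
where $I$ is computed using the Ricci and curvature tensors of $\tilde M_m$.

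The first step is the pathwise identity $I(t/2, X^x_\cdot, V(x)) = I(t/2, X^{m,x}_\cdot, V(x))$ a.s., which follows because every integrand $h, h', \Gamma^h, \Theta^h, \Lambda^h$ carries a factor $l_m(\cdot, X_\cdot)$ or $l_m'(\cdot, X_\cdot)$ and therefore vanishes for $s \ge \tau_m$, while for $s < \tau_m$ we have $X^x_s = X^{m,x}_s$ and the curvature tensors of $M$ and $\tilde M_m$ coincide on $D_m$. Splitting the defining expectation along $\{t \le \tau_m\}$ versus $\{\tau_m < t\}$ and using $\1_{\{t<\zeta\}} = 1$ on the first event yields
\[
F(x) - F_m(x) = \Ee\Bigl[\bigl(f(X_t^x)\1_{\{t<\zeta\}} - f(X_t^{m,x})\bigr)\, I\bigl(\tfrac{t}{2}, X^{m,x}_\cdot, V(x)\bigr) \1_{\{\tau_m < t\}}\Bigr].
\]
Cauchy--Schwarz combined with the moment estimate \eqref{p4-1-4} and the exit-time bound underlying \eqref{eq3.2} then gives $\sup_{x \in K} |F(x) - F_m(x)| \to 0$ as $m \to \infty$, since $d(K, \partial D_m) \to \infty$ makes $\sup_{x \in K} \Pp_x(\tau_m < t)^{1/2}$ decay at the rate $\exp(-c m^2 / t)$, which absorbs any growth of the $L^2$-bound on $I$ in $m$.

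The second step is the continuity of each $F_m$ on $K$. Since the SDE \eqref{e4-1} on the compact manifold $O\tilde M_m$ has smooth bounded coefficients, Kunita's flow theorem provides that $x \mapsto U^{m,x}_\cdot \in C([0,t]; O\tilde M_m)$ is almost surely continuous with moments of all orders uniformly bounded on $K$. For $x_n \to x_0$ in $K$ this gives $X^{m,x_n}_\cdot \to X^{m,x_0}_\cdot$ uniformly a.s., hence $f(X_t^{m,x_n}) \to f(X_t^{m,x_0})$ a.s.\ with majorant $\|f\|_\infty$. For the integrands of the stochastic integrals in $I$, I would use the simplification $l_m(s, \gamma) = \phi(T_m(s, \gamma))$ --- which follows from the change of variable $r = T_m(u, \gamma)$ in \eqref{eq2.5} after noting that $\phi\bigl(\int_0^\tau \phi(r-2)\, dr\bigr) = \phi(\tau)$ for all $\tau \ge 0$ --- together with the a.s.\ transversality of Brownian motion to the smooth hypersurface $\partial D_m$, to obtain pointwise a.s.\ convergence of $l_m(\cdot, X^{m,x_n}_\cdot)$ and $l_m'(\cdot, X^{m,x_n}_\cdot)$ to their $x_0$ analogues. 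Combined with the uniform $L^p$-bound \eqref{eq2.4} on $l_m'$ for some $p > 2$ and the smoothness of $\ric^{\tilde M_m}$ and $R^{\tilde M_m}$ on $\tilde M_m$, Vitali's convergence theorem upgrades this to $L^2([0,t/2] \times \Omega)$ convergence of $\Theta^{h_n}$ and $\Lambda^{h_n}$, and the It\^o isometry then delivers $F_m(x_n) \to F_m(x_0)$.

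Combining the two steps, $F$ is a uniform-on-$K$ limit of continuous functions, hence continuous at $x_0$; since $x_0 \in M$ was arbitrary, continuity of $F$ on $M$ follows. The main obstacle I anticipate is the $L^2$-convergence of the integrands in the second step: controlling $l_m'(\cdot, X^{m,x_n}_\cdot)$ uniformly near paths that graze $\partial D_m$ requires both the explicit identity $l_m = \phi \circ T_m$ and the smoothness of $\partial D_m$ guaranteed by the construction in Section \ref{s2-1}, which together underwrite the a.s.\ transversality of Brownian motion to the boundary and hence the pointwise convergence on which Vitali's argument rests.
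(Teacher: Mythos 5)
The approach you take — realizing $F$ as the uniform-on-compacts limit of the compact-manifold functionals $F_m$, and then proving each $F_m$ is continuous — is genuinely different from the paper's. The paper instead invokes Elworthy's maximal solution flow to obtain, for a.e.\ $\omega$, joint continuity (in fact smoothness) of $u\mapsto U_t(u,\omega)$ on the open set $\Xi_t(\omega)=\{u:\,t<\zeta(u,\omega)\}$, then uses the lower semi-continuity of $\zeta$ to deduce a.s.\ pointwise convergence of the integrands as $x_k\to x_0$, and finally upgrades this to $L^2$ convergence via the moment estimates \eqref{eq2.4}, \eqref{p4-1-4} (uniform integrability). That route goes directly through the non-compact flow and never needs uniform-in-$x$ control of exit probabilities.

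There is a genuine gap in the first step of your argument, namely the claim that $\sup_{x\in K}|F(x)-F_m(x)|\to 0$. You justify it by saying that $\sup_{x\in K}\Pp_x(\tau_m<t)^{1/2}$ decays at rate $\exp(-cm^2/t)$ by "the exit-time bound underlying \eqref{eq3.2}," and that this absorbs the growth of $\Ee_x[|I|^2]^{1/2}$ in $m$. But \eqref{eq3.2} is a statement about the limit $t\downarrow 0$ with the domain \emph{fixed}; it gives no decay rate in $m$ for $t$ fixed. Without curvature lower bounds (which this paper deliberately avoids), there is no uniform Gaussian-type tail bound on $\Pp_x(\tau_m<t)$, and the constants $C_1(m,k)$ in \eqref{eq2.4} controlling $\Ee_x[|I|^2]$ can grow arbitrarily fast in $m$. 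So the product $\Ee_x[|I|^2]^{1/2}\Pp_x(\tau_m<t)^{1/2}$ is not controlled. Even the qualitative statement $\sup_{x\in K}\Pp_x(\tau_m<t)\to 0$ as $m\to\infty$ is not free: it uses stochastic completeness (otherwise the pointwise limit is $\Pp_x(\zeta\le t)>0$), and passing from pointwise to uniform on $K$ would need either Dini (which requires a semicontinuity of $x\mapsto\Pp_x(\tau_m<t)$ that you have not established and which is not obvious) or an a priori estimate that amounts to the very continuity you are trying to prove. There is also a structural slip: the index $m$ in $l_m$ and the compactification index in $\tilde M_m$ play different roles in the paper (the former stays fixed, the latter tends to infinity), while your proposal couples them, which is why the $L^2$-bound on $I$ ends up $m$-dependent and fights the exit probability.

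The second step (continuity of each $F_m$ via Kunita's flow theorem on the compact $\tilde M_m$, together with the identity $l_m=\phi\circ T_m$ and a.s.\ transversality to $\partial D_m$, upgraded by Vitali) is in principle sound, and your observation that $\phi\big(\int_0^\tau\phi(r-2)\,dr\big)=\phi(\tau)$ is a nice simplification. But since the first step does not hold, the overall proof does not go through as written. To fix it within your framework you would need either an $m$-independent $L^2$ bound on $I$ (achievable by keeping the localization index fixed while only the compactification index varies, as in the paper's Lemma \ref{lemma-I}) together with a legitimate argument for $\sup_{x\in K}\Pp_x(\tau_k<t)\to 0$, or else you should simply adopt the paper's direct a.s.-continuity argument via the maximal flow.
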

\begin{proof}
Let $\zeta(x)$ denotes the explosion time of the  solution $X_t^x$ to \eqref{sde1} with the initial value $x$. Let $U$ be a frame at $x$. Then the explosion time of the horizontal Brownian motion
agree with $\xi(x)$ almost surely. So we use $\xi$ for the explosion time of both.
Furthermore, by Elworthy \cite{Elworthy}, there exist a maximal solution  flow $\{U_t(\cdot,\omega)\}_{0\le t <\zeta(\cdot,\omega)}$
to \eqref{sde1} 
such that $U_t(u,\omega)$ is the solution of
 \eqref{sde1} with initial value $u\in OM$, and there is a
null set  $\Delta$  such that for all $\omega \notin \Delta$,

\begin{itemize}
\item [(1)] For each $t>0$, set $\Xi_t(\omega):=\{u\in OM:\ t<\zeta(u,\omega)\}$, Then $\Xi_t$  is open in $OM$
(i.e. $\zeta(\cdot,\omega):OM \to \R_+$ is lower semi-continuous) and $U_t(.,\omega): \Xi_t(\omega)\rightarrow OM$ is a
$C^{\infty}$ diffeomorphism onto its image.

\item[(2)] For each fixed $u \in OM$ with $\pi(u)=x$, there exists a null set $\Delta(u)$ depending on $u$, such that
$\zeta(u,\omega)=\zeta(X_{\cdot}^{x})$ for each $\omega \notin \Delta(u) \bigcup \Delta$.
\end{itemize}

Fix a point $x_0 \in M$.  For each sequence $\{x_k\}_{k=1}^{\infty}$ with $\lim_{k \rightarrow \infty}x_k=x_0$, we
take a sequence $\{u_k\}_{k=1}^{\infty}$ and $U_0$ in $OM$, such that
$\pi(u_k)=x_k$, $\pi(U_0)=x_0$ and $\lim_{k \rightarrow \infty}u_k=u_0$ in $OM$.
Set $\tilde \Delta:=\left(\bigcup_{k=0}^{\infty}\Delta(u_k)\right)\bigcup \Delta$. For each $k$ and $\omega \notin \tilde \Delta$, $\zeta(U_k,\omega)=\zeta\left({x_k},\omega\right)$.
By the lower semi-continuity of $\zeta$,  $\zeta({x_0})\le \liminf_{k \rightarrow \infty}\zeta({x_k})$, hence $u_k \in \Xi_t(\omega)$ for each $t<\zeta({x_0})$ when $k$ is large enough. By the property (1) above, we have immediately
$$\lim_{k \rightarrow \infty} U_t(u_k, \omega)\1_{\{t<\zeta({x_k})\}}=U_t(u_0,\omega)\1_{\{t<\zeta({x_0})\}}, \;
\quad \omega \notin \tilde \Delta,\ t>0.$$
Combing this with the definition $\Theta(s,X_{\cdot},v)$ and the expression \eqref{eq2.5} of $l_m$, we see
that
\begin{equation}\label{l5-1-1}
\lim_{k \rightarrow \infty}\Theta\left(s,X_{\cdot}^{x_k},V(x_k)\right)=
\Theta\left(s,X_{\cdot}^{x_0},V(x_0)\right),\quad \ s>0.
\end{equation}

Let $h(s, X_{\cdot},V(x)):=\Big(\frac{t-2s}{t}\vee 0\Big)\cdot l_m(s,X_{\cdot})\cdot u_0(x)^{-1}V(x)$, where $u_0(\cdot)$ is a
smooth section of $OM$ with $\pi(u_0(x))=x$.  We only need to demonstrate the proof for one of the term in $I(t,X_{\cdot}^{x},v)$, for this we set
\begin{equation*}
\begin{split}
w(x)&:=
\Ee\left[f\left(X_t^x\right)\left(\int_0^{\frac{t}{2}}
\left\langle \Theta\left(s,X_{\cdot}^x,V(x)\right), \d B_s\right\rangle\right)\1_{\{t<\zeta(x)\}}\right]\\
&=:\Ee\left[f\left(X_t^x\right)\left(\int_0^{\frac{t}{2}}
\left\langle\left(h'(s)+\frac{1}{2}\ric_{U_s}h(s)\right),\d B_s\right\rangle\right)\1_{\{t<\zeta(x)\}}\right],
\end{split}
\end{equation*}
  For simplicity, we only prove the continuity for
the function $x\rightarrow w(x)$, the continuity property for the other terms in $F(x)$ could be verified similarly.

According to \eqref{eq2.4} we obtain
\begin{equation*}
\sup_{k>0}\Ee\left[\left|\int_0^{\frac{t}{2}}\left\langle
\Theta\left(s,X_{\cdot}^{x_k},V(x_k)\right),\d B_s\right\rangle\right|^4\right]<\infty.
\end{equation*}
Based on this and \eqref{l5-1-1} we have
\begin{equation*}
\lim_{k \rightarrow \infty}\Ee\left[\left|\int_0^{\frac{t}{2}}\left\langle
\Theta\left(s,X_{\cdot}^{x_k},V(x_k)\right),\d B_s\right\rangle-
\int_0^{\frac{t}{2}}\left\langle
\Theta\left(s,X_{\cdot}^{x_0},V(x_0)\right),\d B_s\right\rangle \right|^2\right]=0.
\end{equation*}
Similarly from \eqref{l5-1-1} we arrive at
\begin{equation*}
\lim_{k \rightarrow \infty}\Ee\left[\left|f(X_t^{x_k})\1_{\{t<\zeta({x_k})\}}-f(X_t^{x_0})\1_{\{t<\zeta({x_0})\}}\right|^2\right]=0.
\end{equation*}
Therefore by Cauchy-Schwartz inequality
\begin{equation*}
\begin{split}
&\lim_{k \rightarrow \infty}|w(x_k)-w(x_0)|^2\\
&\le 2\|f\|_{\infty}^2\lim_{k \rightarrow \infty}\Ee\left[\left|\int_0^{\frac{t}{2}}\left\langle
\Theta\left(s,X_{\cdot}^{x_k},V(x_k)\right),\d B_s\right\rangle-
\int_0^{\frac{t}{2}}\left\langle
\Theta\left(s,X_{\cdot}^{x_0},V(x_0)\right),\d B_s\right\rangle \right|^2\right]\\
&+ 2\sup_{k>0}\Ee\left[\left|\int_0^{\frac{t}{2}}\left\langle
\Theta\left(s,X_{\cdot}^{x_k},V(x_k)\right),\d B_s\right\rangle\right|^4\right]\cdot\lim_{k \rightarrow \infty}\Ee\left[\left|f(X_t^{x_k})\1_{\{t<\zeta({x_k})\}}-f(X_t^{x_0})\1_{\{t<\zeta({x_0})\}}\right|^2\right]\\
&=0
\end{split}
\end{equation*}
Since $\{x_k\}_{k=1}^{\infty}$ is arbitrarily chosen, $w(\cdot)$ is continuous at $x_0\in M$.
Again $x_0$ is arbitrary, so $w(\cdot)$ is continuous on $M$.
This completes the proof for the lemma.
\end{proof}
\end{appendix}

\begin{acks}[Acknowledgments]
We would like to thank Christian B\"ar and Robert Neel for  helpful comments and the referees for their valuable comments and suggestions.
\end{acks}

\begin{funding}
The research of Xin Chen is supported by the National Natural Science Foundation of China (No. 12122111).
The research of Xue-Mei Li is supported by EPSRC(Nos. EP/E058124/1, S023925/1, and EP/V026100/1).
The research of Bo Wu is supported by the National Natural Science Foundation of China (No. 12071085).
\end{funding}

\bibliographystyle{imsart-number}
\bibliography{AOP1599}

\def\cprime{$'$}
\begin{thebibliography}{73}

\bibitem{A}
\begin{barticle}[author]
\bauthor{\bsnm{Aida},~\bfnm{Shigeki}\binits{S.}}
(\byear{2000}).
\btitle{Logarithmic derivatives of heat kernels and logarithmic {S}obolev
  inequalities with unbounded diffusion coefficients on loop spaces}.
\bjournal{J. Funct. Anal.}
\bvolume{174}
\bpages{430--477}.
\bdoi{10.1006/jfan.2000.3592}
\bmrnumber{1768982}
\end{barticle}
\endbibitem

\bibitem{A3}
\begin{bincollection}[author]
\bauthor{\bsnm{Aida},~\bfnm{Shigeki}\binits{S.}}
(\byear{2004}).
\btitle{Precise {G}aussian estimates of heat kernels on asymptotically flat
  {R}iemannian manifolds with poles}.
In \bbooktitle{Recent developments in stochastic analysis and related topics}
\bpages{1--19}.
\bpublisher{World Sci. Publ., Hackensack, NJ}.
\bmrnumber{2200501}
\end{bincollection}
\endbibitem

\bibitem{A4}
\begin{barticle}[author]
\bauthor{\bsnm{Aida},~\bfnm{Shigeki}\binits{S.}}
(\byear{2015}).
\btitle{Asymptotics of spectral gaps on loop spaces over a class of Riemannian
  manifolds}.
\bjournal{J. Funct. Anal.}
\bvolume{269}
\bpages{3714--3764}.
\bdoi{10.1006/jfan.2000.3592}
\bmrnumber{1768982}
\end{barticle}
\endbibitem

\bibitem{Airault-Malliavin}
\begin{barticle}[author]
\bauthor{\bsnm{Airault},~\bfnm{H.}\binits{H.}} \AND
  \bauthor{\bsnm{Malliavin},~\bfnm{P.}\binits{P.}}
(\byear{1992}).
\btitle{Integration on loop groups. {II}. {H}eat equation for the {W}iener
  measure}.
\bjournal{J. Funct. Anal.}
\bvolume{104}
\bpages{71--109}.
\bdoi{10.1016/0022-1236(92)90091-V}
\bmrnumber{1152460}
\end{barticle}
\endbibitem

\bibitem{Airault-Malliavin-Ren}
\begin{barticle}[author]
\bauthor{\bsnm{Airault},~\bfnm{H\'{e}l\`ene}\binits{H.}},
  \bauthor{\bsnm{Malliavin},~\bfnm{Paul}\binits{P.}} \AND
  \bauthor{\bsnm{Ren},~\bfnm{Jiagang}\binits{J.}}
(\byear{1999}).
\btitle{Smoothness of stopping times of diffusion processes}.
\bjournal{J. Math. Pures Appl. (9)}
\bvolume{78}
\bpages{1069--1091}.
\bdoi{10.1016/S0021-7824(99)00026-4}
\bmrnumber{1732053}
\end{barticle}
\endbibitem

\bibitem{APT}
\begin{barticle}[author]
\bauthor{\bsnm{Arnaudon},~\bfnm{Marc}\binits{M.}},
  \bauthor{\bsnm{Plank},~\bfnm{Holger}\binits{H.}} \AND
  \bauthor{\bsnm{Thalmaier},~\bfnm{Anton}\binits{A.}}
(\byear{2003}).
\btitle{A {B}ismut type formula for the {H}essian of heat semigroups}.
\bjournal{C. R. Math. Acad. Sci. Paris}
\bvolume{336}
\bpages{661--666}.
\bdoi{10.1016/S1631-073X(03)00123-7}
\bmrnumber{1988128 (2004d:58048)}
\end{barticle}
\endbibitem

\bibitem{Azencott}
\begin{bincollection}[author]
\bauthor{\bsnm{Azencott},~\bfnm{R.}\binits{R.}}
(\byear{1980}).
\btitle{Grandes d\'{e}viations et applications}.
In \bbooktitle{Eighth {S}aint {F}lour {P}robability {S}ummer {S}chool---1978
  ({S}aint {F}lour, 1978)}.
\bseries{Lecture Notes in Math.}
\bvolume{774}
\bpages{1--176}.
\bpublisher{Springer, Berlin}.
\bmrnumber{590626}
\end{bincollection}
\endbibitem

\bibitem{Azencott-as}
\begin{bbook}[author]
\beditor{\bsnm{Azencott},~\bfnm{R.}\binits{R.}}, ed.
(\byear{1981}).
\btitle{G\'eod\'esiques et diffusions en temps petit, s\'minaire de
  probabilit\'s universit\' de Paris VII, Asterisque}
\bvolume{84-85, Ch. 5 by L. Elie and Ch. IX by C. Bellaiche, and}.
\bpublisher{Math. Soc. France}.
\end{bbook}
\endbibitem

\bibitem{BBBOW13}
\begin{barticle}[author]
\bauthor{\bsnm{Ball},~\bfnm{K.}\binits{K.}},
  \bauthor{\bsnm{Barthe},~\bfnm{F.}\binits{F.}},
  \bauthor{\bsnm{Bednorz},~\bfnm{W.}\binits{W.}},
  \bauthor{\bsnm{Oleszkiewicz},~\bfnm{K.}\binits{K.}} \AND
  \bauthor{\bsnm{Wolff},~\bfnm{P.}\binits{P.}}
(\byear{2013}).
\btitle{{$L^1$}-smoothing for the {O}rnstein-{U}hlenbeck semigroup}.
\bjournal{Mathematika}
\bvolume{59}
\bpages{160--168}.
\bdoi{10.1112/S0025579312001064}
\bmrnumber{3028177}
\end{barticle}
\endbibitem

\bibitem{Ben}
\begin{barticle}[author]
\bauthor{\bsnm{Ben~Arous},~\bfnm{G.}\binits{G.}}
(\byear{1988}).
\btitle{D\'{e}veloppement asymptotique du noyau de la chaleur hypoelliptique
  hors du cut-locus}.
\bjournal{Ann. Sci. \'{E}cole Norm. Sup. (4)}
\bvolume{21}
\bpages{307--331}.
\bmrnumber{974408}
\end{barticle}
\endbibitem

\bibitem{Ben-Le}
\begin{barticle}[author]
\bauthor{\bsnm{Ben~Arous},~\bfnm{G.}\binits{G.}} \AND
  \bauthor{\bsnm{L\'{e}andre},~\bfnm{R.}\binits{R.}}
(\byear{1991}).
\btitle{D\'{e}croissance exponentielle du noyau de la chaleur sur la diagonale.
  {I}}.
\bjournal{Probab. Theory Related Fields}
\bvolume{90}
\bpages{175--202}.
\bdoi{10.1007/BF01192161}
\bmrnumber{1128069}
\end{barticle}
\endbibitem

\bibitem{Bis}
\begin{bbook}[author]
\bauthor{\bsnm{Bismut},~\bfnm{Jean-Michel}\binits{J.-M.}}
(\byear{1984}).
\btitle{Large deviations and the {M}alliavin calculus}.
\bseries{Progress in Mathematics}
\bvolume{45}.
\bpublisher{Birkh\"auser Boston, Inc., Boston, MA}.
\bmrnumber{755001 (86f:58150)}
\end{bbook}
\endbibitem

\bibitem{Cheeger-Gromov-Taylor}
\begin{barticle}[author]
\bauthor{\bsnm{Cheeger},~\bfnm{Jeff}\binits{J.}},
  \bauthor{\bsnm{Gromov},~\bfnm{Mikhail}\binits{M.}} \AND
  \bauthor{\bsnm{Taylor},~\bfnm{Michael}\binits{M.}}
(\byear{1982}).
\btitle{Finite propagation speed, kernel estimates for functions of the
  {L}aplace operator, and the geometry of complete {R}iemannian manifolds}.
\bjournal{J. Differential Geom.}
\bvolume{17}
\bpages{15--53}.
\bmrnumber{658471}
\end{barticle}
\endbibitem

\bibitem{Cheeger-Yau}
\begin{barticle}[author]
\bauthor{\bsnm{Cheeger},~\bfnm{Jeff}\binits{J.}} \AND
  \bauthor{\bsnm{Yau},~\bfnm{Shing~Tung}\binits{S.~T.}}
(\byear{1981}).
\btitle{A lower bound for the heat kernel}.
\bjournal{Comm. Pure Appl. Math.}
\bvolume{34}
\bpages{465--480}.
\bdoi{10.1002/cpa.3160340404}
\bmrnumber{615626}
\end{barticle}
\endbibitem

\bibitem{CLW}
\begin{bunpublished}[author]
\bauthor{\bsnm{Chen},~\bfnm{Xin}\binits{X.}},
  \bauthor{\bsnm{Li},~\bfnm{Xue-Mei}\binits{X.-M.}} \AND
  \bauthor{\bsnm{Wu},~\bfnm{Bo}\binits{B.}}
\btitle{Stochastic analysis on loop space over general {R}iemannian manifold}.
\bnote{Preprint}.
\end{bunpublished}
\endbibitem

\bibitem{CLW1}
\begin{barticle}[author]
\bauthor{\bsnm{Chen},~\bfnm{Xin}\binits{X.}},
  \bauthor{\bsnm{Li},~\bfnm{Xue-Mei}\binits{X.-M.}} \AND
  \bauthor{\bsnm{Wu},~\bfnm{Bo}\binits{B.}}
(\byear{2010}).
\btitle{A {P}oincar\'{e} inequality on loop spaces}.
\bjournal{J. Funct. Anal.}
\bvolume{259}
\bpages{1421--1442}.
\bdoi{10.1016/j.jfa.2010.05.006}
\bmrnumber{2659766}
\end{barticle}
\endbibitem

\bibitem{CLW2}
\begin{barticle}[author]
\bauthor{\bsnm{Chen},~\bfnm{Xin}\binits{X.}},
  \bauthor{\bsnm{Li},~\bfnm{Xue-Mei}\binits{X.-M.}} \AND
  \bauthor{\bsnm{Wu},~\bfnm{Bo}\binits{B.}}
(\byear{2011}).
\btitle{A concrete estimate for the weak {P}oincar\'{e} inequality on loop
  space}.
\bjournal{Probab. Theory Related Fields}
\bvolume{151}
\bpages{559--590}.
\bdoi{10.1007/s00440-010-0308-5}
\bmrnumber{2851693}
\end{barticle}
\endbibitem

\bibitem{CW}
\begin{barticle}[author]
\bauthor{\bsnm{Chen},~\bfnm{Xin}\binits{X.}} \AND
  \bauthor{\bsnm{Wu},~\bfnm{Bo}\binits{B.}}
(\byear{2014}).
\btitle{Functional inequality on path space over a non-compact {R}iemannian
  manifold}.
\bjournal{J. Funct. Anal.}
\bvolume{266}
\bpages{6753--6779}.
\bdoi{10.1016/j.jfa.2014.03.017}
\bmrnumber{3198853}
\end{barticle}
\endbibitem

\bibitem{CLY}
\begin{barticle}[author]
\bauthor{\bsnm{Cheng},~\bfnm{Siu~Yuen}\binits{S.~Y.}},
  \bauthor{\bsnm{Li},~\bfnm{Peter}\binits{P.}} \AND
  \bauthor{\bsnm{Yau},~\bfnm{Shing~Tung}\binits{S.~T.}}
(\byear{1981}).
\btitle{On the upper estimate of the heat kernel of a complete {R}iemannian
  manifold}.
\bjournal{Amer. J. Math.}
\bvolume{103}
\bpages{1021--1063}.
\bdoi{10.2307/2374257}
\bmrnumber{630777 (83c:58083)}
\end{barticle}
\endbibitem

\bibitem{Davies}
\begin{bbook}[author]
\bauthor{\bsnm{Davies},~\bfnm{E.~B.}\binits{E.~B.}}
(\byear{1989}).
\btitle{Heat kernels and spectral theory}.
\bseries{Cambridge Tracts in Mathematics}
\bvolume{92}.
\bpublisher{Cambridge University Press, Cambridge}.
\bdoi{10.1017/CBO9780511566158}
\bmrnumber{990239}
\end{bbook}
\endbibitem

\bibitem{D1}
\begin{barticle}[author]
\bauthor{\bsnm{Driver},~\bfnm{Bruce~K.}\binits{B.~K.}}
(\byear{1992}).
\btitle{A {C}ameron-{M}artin type quasi-invariance theorem for {B}rownian
  motion on a compact {R}iemannian manifold}.
\bjournal{J. Funct. Anal.}
\bvolume{110}
\bpages{272--376}.
\bdoi{10.1016/0022-1236(92)90035-H}
\bmrnumber{1194990}
\end{barticle}
\endbibitem

\bibitem{D2}
\begin{barticle}[author]
\bauthor{\bsnm{Driver},~\bfnm{Bruce~K.}\binits{B.~K.}}
(\byear{1994}).
\btitle{A {C}ameron-{M}artin type quasi-invariance theorem for pinned
  {B}rownian motion on a compact {R}iemannian manifold}.
\bjournal{Trans. Amer. Math. Soc.}
\bvolume{342}
\bpages{375--395}.
\bdoi{10.2307/2154699}
\bmrnumber{1154540}
\end{barticle}
\endbibitem

\bibitem{E}
\begin{barticle}[author]
\bauthor{\bsnm{Eberle},~\bfnm{Andreas}\binits{A.}}
(\byear{2001}).
\btitle{Local {P}oincar\'{e} inequalities on loop spaces}.
\bjournal{C. R. Acad. Sci. Paris S\'{e}r. I Math.}
\bvolume{333}
\bpages{1023--1028}.
\bdoi{10.1016/S0764-4442(01)02174-7}
\bmrnumber{1872466}
\end{barticle}
\endbibitem

\bibitem{E1}
\begin{barticle}[author]
\bauthor{\bsnm{Eberle},~\bfnm{Andreas}\binits{A.}}
(\byear{2002}).
\btitle{Absence of spectral gaps on a class of loop spaces}.
\bjournal{J. Math. Pures Appl. (9)}
\bvolume{81}
\bpages{915--955}.
\bdoi{10.1016/S0021-7824(02)01260-6}
\bmrnumber{1946909}
\end{barticle}
\endbibitem

\bibitem{EE}
\begin{bincollection}[author]
\bauthor{\bsnm{Eells},~\bfnm{J.}\binits{J.}} \AND
  \bauthor{\bsnm{Elworthy},~\bfnm{K.~D.}\binits{K.~D.}}
(\byear{1971}).
\btitle{On {F}redholm manifolds}.
In \bbooktitle{Actes du {C}ongr\`es {I}nternational des {M}ath\'{e}maticiens
  ({N}ice, 1970), {T}ome 2}
\bpages{215--219}.
\bpublisher{Gauthier-Villars, Paris}.
\end{bincollection}
\endbibitem

\bibitem{EL15:focs}
\begin{bincollection}[author]
\bauthor{\bsnm{Eldan},~\bfnm{R.}\binits{R.}} \AND
  \bauthor{\bsnm{Lee},~\bfnm{J.~R.}\binits{J.~R.}}
(\byear{2015}).
\btitle{Talagrand's convolution conjecture on {G}aussian space}.
In \bbooktitle{2015 {IEEE} 56th {A}nnual {S}ymposium on {F}oundations of
  {C}omputer {S}cience---{FOCS} 2015}
\bpages{1395--1408}.
\bpublisher{IEEE Computer Soc., Los Alamitos, CA}.
\bmrnumber{3473377}
\end{bincollection}
\endbibitem

\bibitem{Elworthy78}
\begin{bincollection}[author]
\bauthor{\bsnm{Elworthy},~\bfnm{K.~D.}\binits{K.~D.}}
(\byear{1978}).
\btitle{Stochastic dynamical systems and their flows}.
In \bbooktitle{Stochastic analysis ({P}roc. {I}nternat. {C}onf., {N}orthwestern
  {U}niv., {E}vanston, {I}ll., 1978)}
\bpages{79--95}.
\bpublisher{Academic Press, New York-London}.
\bmrnumber{517235}
\end{bincollection}
\endbibitem

\bibitem{Elworthy}
\begin{bbook}[author]
\bauthor{\bsnm{Elworthy},~\bfnm{K.~D.}\binits{K.~D.}}
(\byear{1982}).
\btitle{Stochastic differential equations on manifolds}.
\bseries{London Mathematical Society Lecture Note Series}
\bvolume{70}.
\bpublisher{Cambridge University Press, Cambridge-New York}.
\bmrnumber{675100 (84d:58080)}
\end{bbook}
\endbibitem

\bibitem{ELL}
\begin{bbook}[author]
\bauthor{\bsnm{Elworthy},~\bfnm{K.~D.}\binits{K.~D.}},
  \bauthor{\bsnm{Le~Jan},~\bfnm{Y.}\binits{Y.}} \AND
  \bauthor{\bsnm{Li},~\bfnm{Xue-Mei}\binits{X.-M.}}
(\byear{1999}).
\btitle{On the geometry of diffusion operators and stochastic flows}.
\bseries{Lecture Notes in Mathematics}
\bvolume{1720}.
\bpublisher{Springer-Verlag, Berlin}.
\bdoi{10.1007/BFb0103064}
\bmrnumber{1735806}
\end{bbook}
\endbibitem

\bibitem{EL1}
\begin{barticle}[author]
\bauthor{\bsnm{Elworthy},~\bfnm{K.~D.}\binits{K.~D.}} \AND
  \bauthor{\bsnm{Li},~\bfnm{Xue-Mei}\binits{X.-M.}}
(\byear{1994}).
\btitle{Formulae for the derivatives of heat semigroups}.
\bjournal{J. Funct. Anal.}
\bvolume{125}
\bpages{252--286}.
\bdoi{10.1006/jfan.1994.1124}
\bmrnumber{1297021 (95j:60087)}
\end{barticle}
\endbibitem

\bibitem{EL-Vilnius}
\begin{bincollection}[author]
\bauthor{\bsnm{Elworthy},~\bfnm{K.~D.}\binits{K.~D.}} \AND
  \bauthor{\bsnm{Li},~\bfnm{Xue-Mei}\binits{X.-M.}}
(\byear{1994}).
\btitle{Differentiation of heat semigroups and applications}.
In \bbooktitle{Probability theory and mathematical statistics ({V}ilnius,
  1993)}
\bpages{239--251}.
\bpublisher{TEV, Vilnius}.
\bmrnumber{1649579}
\end{bincollection}
\endbibitem

\bibitem{EL}
\begin{bincollection}[author]
\bauthor{\bsnm{Elworthy},~\bfnm{K.~D.}\binits{K.~D.}} \AND
  \bauthor{\bsnm{Li},~\bfnm{Xue-Mei}\binits{X.-M.}}
(\byear{1996}).
\btitle{A class of integration by parts formulae in stochastic analysis. {I}}.
In \bbooktitle{It\^{o}'s stochastic calculus and probability theory}
\bpages{15--30}.
\bpublisher{Springer, Tokyo}.
\bmrnumber{1439515}
\end{bincollection}
\endbibitem

\bibitem{En}
\begin{barticle}[author]
\bauthor{\bsnm{Engoulatov},~\bfnm{A.}\binits{A.}}
(\byear{2006}).
\btitle{A universal bound on the gradient of logarithm of the heat kernel for
  manifolds with bounded {R}icci curvature}.
\bjournal{J. Funct. Anal.}
\bvolume{238}
\bpages{518--529}.
\bdoi{10.1016/j.jfa.2006.02.013}
\bmrnumber{2253731}
\end{barticle}
\endbibitem

\bibitem{FM}
\begin{barticle}[author]
\bauthor{\bsnm{Fang},~\bfnm{Shizan}\binits{S.}} \AND
  \bauthor{\bsnm{Malliavin},~\bfnm{Paul}\binits{P.}}
(\byear{1993}).
\btitle{Stochastic analysis on the path space of a {R}iemannian manifold. {I}.
  {M}arkovian stochastic calculus}.
\bjournal{J. Funct. Anal.}
\bvolume{118}
\bpages{249--274}.
\end{barticle}
\endbibitem

\bibitem{Gong-Rockner-Wu}
\begin{barticle}[author]
\bauthor{\bsnm{Gong},~\bfnm{Fuzhou}\binits{F.}},
  \bauthor{\bsnm{R{\"o}ckner},~\bfnm{Michael}\binits{M.}} \AND
  \bauthor{\bsnm{Wu},~\bfnm{Liming}\binits{L.}}
(\byear{2001}).
\btitle{Poincar\'e inequality for weighted first order {S}obolev spaces on loop
  spaces}.
\bjournal{J. Funct. Anal.}
\bvolume{185}
\bpages{527--563}.
\bdoi{10.1006/jfan.2001.3775}
\bmrnumber{1856276}
\end{barticle}
\endbibitem

\bibitem{GM}
\begin{barticle}[author]
\bauthor{\bsnm{Gong},~\bfnm{Fu-Zhou}\binits{F.-Z.}} \AND
  \bauthor{\bsnm{Ma},~\bfnm{Zhi-Ming}\binits{Z.-M.}}
(\byear{1998}).
\btitle{The log-{S}obolev inequality on loop space over a compact {R}iemannian
  manifold}.
\bjournal{J. Funct. Anal.}
\bvolume{157}
\bpages{599--623}.
\bdoi{10.1006/jfan.1998.3263}
\bmrnumber{1638269}
\end{barticle}
\endbibitem

\bibitem{GLMRS}
\begin{barticle}[author]
\bauthor{\bsnm{Gozlan},~\bfnm{N.}\binits{N.}},
  \bauthor{\bsnm{Li},~\bfnm{Xue-Mei}\binits{X.-M.}},
  \bauthor{\bsnm{Madiman},~\bfnm{M.}\binits{M.}},
  \bauthor{\bsnm{Roberto},~\bfnm{C.}\binits{C.}} \AND
  \bauthor{\bsnm{Samson},~\bfnm{P.~M.}\binits{P.~M.}}
(\byear{2021}).
\btitle{Log-Hessian and Deviation Bounds for Markov Semi-Groups, and
  Regularization Effect in L1}.
\bjournal{Potential Analysis}.
\bnote{arXiv:1907.10896, https://doi.org/10.1007/s11118-021-09934-z}.
\end{barticle}
\endbibitem

\bibitem{GW}
\begin{bbook}[author]
\bauthor{\bsnm{Greene},~\bfnm{R.~E.}\binits{R.~E.}} \AND
  \bauthor{\bsnm{Wu},~\bfnm{H.}\binits{H.}}
(\byear{1979}).
\btitle{Function theory on manifolds which possess a pole}.
\bseries{Lecture Notes in Mathematics}
\bvolume{699}.
\bpublisher{Springer, Berlin}.
\bmrnumber{521983}
\end{bbook}
\endbibitem

\bibitem{Gro75}
\begin{barticle}[author]
\bauthor{\bsnm{Gross},~\bfnm{Leonard}\binits{L.}}
(\byear{1975}).
\btitle{Logarithmic {S}obolev inequalities}.
\bjournal{Amer. J. Math.}
\bvolume{97}
\bpages{1061--1083}.
\bmrnumber{0420249}
\end{barticle}
\endbibitem

\bibitem{Gross-91}
\begin{barticle}[author]
\bauthor{\bsnm{Gross},~\bfnm{Leonard}\binits{L.}}
(\byear{1991}).
\btitle{Logarithmic {S}obolev inequalities on loop groups}.
\bjournal{J. Funct. Anal.}
\bvolume{102}
\bpages{268--313}.
\end{barticle}
\endbibitem

\bibitem{Hamilton93}
\begin{barticle}[author]
\bauthor{\bsnm{Hamilton},~\bfnm{Richard~S.}\binits{R.~S.}}
(\byear{1993}).
\btitle{A matrix {H}arnack estimate for the heat equation}.
\bjournal{Comm. Anal. Geom.}
\bvolume{1}
\bpages{113--126}.
\bdoi{10.4310/CAG.1993.v1.n1.a6}
\bmrnumber{1230276}
\end{barticle}
\endbibitem

\bibitem{Hsu90}
\begin{barticle}[author]
\bauthor{\bsnm{Hsu},~\bfnm{Elton~P.}\binits{E.~P.}}
(\byear{1990}).
\btitle{Heat kernel on noncomplete manifolds}.
\bjournal{Indiana Univ. Math. J.}
\bvolume{39}
\bpages{431--442}.
\bdoi{10.1512/iumj.1990.39.39023}
\bmrnumber{1089046}
\end{barticle}
\endbibitem

\bibitem{HsuEstimates}
\begin{barticle}[author]
\bauthor{\bsnm{Hsu},~\bfnm{Elton~P.}\binits{E.~P.}}
(\byear{1999}).
\btitle{Estimates of derivatives of the heat kernel on a compact {R}iemannian
  manifold}.
\bjournal{Proc. Amer. Math. Soc.}
\bvolume{127}
\bpages{3739--3744}.
\bdoi{10.1090/S0002-9939-99-04967-9}
\bmrnumber{1618694 (2000c:58047)}
\end{barticle}
\endbibitem

\bibitem{Hsu2}
\begin{bbook}[author]
\bauthor{\bsnm{Hsu},~\bfnm{Elton~P.}\binits{E.~P.}}
(\byear{2002}).
\btitle{Stochastic analysis on manifolds}.
\bseries{Graduate Studies in Mathematics}
\bvolume{38}.
\bpublisher{American Mathematical Society, Providence, RI}.
\end{bbook}
\endbibitem

\bibitem{Hsu-Ouyang}
\begin{barticle}[author]
\bauthor{\bsnm{Hsu},~\bfnm{Elton~P.}\binits{E.~P.}} \AND
  \bauthor{\bsnm{Ouyang},~\bfnm{Cheng}\binits{C.}}
(\byear{2009}).
\btitle{Quasi-invariance of the {W}iener measure on the path space over a
  complete {R}iemannian manifold}.
\bjournal{J. Funct. Anal.}
\bvolume{257}
\bpages{1379--1395}.
\bdoi{10.1016/j.jfa.2009.05.017}
\bmrnumber{2541273}
\end{barticle}
\endbibitem

\bibitem{Kotschwar}
\begin{barticle}[author]
\bauthor{\bsnm{Kotschwar},~\bfnm{Brett~L.}\binits{B.~L.}}
(\byear{2007}).
\btitle{Hamilton's gradient estimate for the heat kernel on complete
  manifolds}.
\bjournal{Proc. Amer. Math. Soc.}
\bvolume{135}
\bpages{3013--3019}.
\bdoi{10.1090/S0002-9939-07-08837-5}
\bmrnumber{2317980}
\end{barticle}
\endbibitem

\bibitem{Leh16}
\begin{barticle}[author]
\bauthor{\bsnm{Lehec},~\bfnm{J.}\binits{J.}}
(\byear{2016}).
\btitle{Regularization in {$L_1$} for the {O}rnstein-{U}hlenbeck semigroup}.
\bjournal{Ann. Fac. Sci. Toulouse Math. (6)}
\bvolume{25}
\bpages{191--204}.
\bdoi{10.5802/afst.1492}
\bmrnumber{3485296}
\end{barticle}
\endbibitem

\bibitem{Li-Yau}
\begin{barticle}[author]
\bauthor{\bsnm{Li},~\bfnm{Peter}\binits{P.}} \AND
  \bauthor{\bsnm{Yau},~\bfnm{Shing-Tung}\binits{S.-T.}}
(\byear{1986}).
\btitle{On the parabolic kernel of the {S}chr\"odinger operator}.
\bjournal{Acta Math.}
\bvolume{156}
\bpages{153--201}.
\bdoi{10.1007/BF02399203}
\bmrnumber{834612}
\end{barticle}
\endbibitem

\bibitem{XLi}
\begin{barticle}[author]
\bauthor{\bsnm{Li},~\bfnm{Xiang-Dong}\binits{X.-D.}}
(\byear{2016}).
\btitle{Hamilton's {H}arnack inequality and the {$W$}-entropy formula on
  complete {R}iemannian manifolds}.
\bjournal{Stochastic Process. Appl.}
\bvolume{126}
\bpages{1264--1283}.
\bdoi{10.1016/j.spa.2015.11.002}
\bmrnumber{3461198}
\end{barticle}
\endbibitem

\bibitem{Li-thesis}
\begin{bunpublished}[author]
\bauthor{\bsnm{Li},~\bfnm{Xue-Mei}\binits{X.-M.}}
(\byear{1992}).
\btitle{Stochastic Differential Equations on noncompact manifolds}.
\bnote{University of Warwick Thesis}.
\end{bunpublished}
\endbibitem

\bibitem{Li-flow}
\begin{barticle}[author]
\bauthor{\bsnm{Li},~\bfnm{Xue-Mei}\binits{X.-M.}}
(\byear{1994}).
\btitle{Strong {$p$}-completeness of stochastic differential equations and the
  existence of smooth flows on noncompact manifolds}.
\bjournal{Probab. Theory Related Fields}
\bvolume{100}
\bpages{485--511}.
\bdoi{10.1007/BF01268991}
\bmrnumber{1305784}
\end{barticle}
\endbibitem

\bibitem{Li2}
\begin{barticle}[author]
\bauthor{\bsnm{Li},~\bfnm{Xue-Mei}\binits{X.-M.}}
(\byear{2018}).
\btitle{Generalised {B}rownian bridges: examples}.
\bjournal{Markov Process. Related Fields}
\bvolume{24}
\bpages{151--163}.
\bmrnumber{3793947}
\end{barticle}
\endbibitem

\bibitem{Li-doubly-damped}
\begin{bincollection}[author]
\bauthor{\bsnm{Li},~\bfnm{Xue-Mei}\binits{X.-M.}}
(\byear{2018}).
\btitle{Doubly damped stochastic parallel translations and {H}essian formulas}.
In \bbooktitle{Stochastic partial differential equations and related fields}.
\bseries{Springer Proc. Math. Stat.}
\bvolume{229}
\bpages{345--357}.
\bpublisher{Springer, Cham}.
\bmrnumber{3828180}
\end{bincollection}
\endbibitem

\bibitem{Li18}
\begin{barticle}[author]
\bauthor{\bsnm{Li},~\bfnm{Xue-Mei}\binits{X.-M.}}
(\byear{2021}).
\btitle{Hessian formulas and estimates for parabolic {S}chr\"{o}dinger
  operators}.
\bjournal{J. Stoch. Anal.}
\bvolume{2}
\bpages{Art. 7, 53}.
\bnote{https://arxiv.org/abs/1610.09538}.
\bmrnumber{4304478}
\end{barticle}
\endbibitem

\bibitem{Li-Scheutzow}
\begin{barticle}[author]
\bauthor{\bsnm{Li},~\bfnm{Xue-Mei}\binits{X.-M.}} \AND
  \bauthor{\bsnm{Scheutzow},~\bfnm{Michael}\binits{M.}}
(\byear{2011}).
\btitle{Lack of strong completeness for stochastic flows}.
\bjournal{Ann. Probab.}
\bvolume{39}
\bpages{1407--1421}.
\bdoi{10.1214/10-AOP585}
\bmrnumber{2857244}
\end{barticle}
\endbibitem

\bibitem{Li-Thompson}
\begin{barticle}[author]
\bauthor{\bsnm{Li},~\bfnm{Xue-Mei}\binits{X.-M.}} \AND
  \bauthor{\bsnm{Thompson},~\bfnm{James}\binits{J.}}
(\byear{2018}).
\btitle{First order {F}eynman-{K}ac formula}.
\bjournal{Stochastic Process. Appl.}
\bvolume{128}
\bpages{3006--3029}.
\end{barticle}
\endbibitem

\bibitem{MalliavinMon}
\begin{bbook}[author]
\bauthor{\bsnm{Malliavin},~\bfnm{Paul}\binits{P.}}
(\byear{1978}).
\btitle{G\'{e}om\'{e}trie diff\'{e}rentielle stochastique}.
\bseries{S\'{e}minaire de Math\'{e}matiques Sup\'{e}rieures [Seminar on Higher
  Mathematics]}
\bvolume{64}.
\bpublisher{Presses de l'Universit\'{e} de Montr\'{e}al, Montreal, Que.}
\bnote{Notes prepared by Dani\`ele Dehen and Dominique Michel}.
\bmrnumber{540035}
\end{bbook}
\endbibitem

\bibitem{Malliavin}
\begin{barticle}[author]
\bauthor{\bsnm{Malliavin},~\bfnm{Paul}\binits{P.}}
(\byear{1999}).
\btitle{The canonic diffusion above the diffeomorphism group of the circle}.
\bjournal{C. R. Acad. Sci. Paris S\'{e}r. I Math.}
\bvolume{329}
\bpages{325--329}.
\bdoi{10.1016/S0764-4442(00)88575-4}
\bmrnumber{1713340}
\end{barticle}
\endbibitem

\bibitem{MS}
\begin{barticle}[author]
\bauthor{\bsnm{Malliavin},~\bfnm{Paul}\binits{P.}} \AND
  \bauthor{\bsnm{Stroock},~\bfnm{Daniel~W.}\binits{D.~W.}}
(\byear{1996}).
\btitle{Short time behavior of the heat kernel and its logarithmic
  derivatives}.
\bjournal{J. Differential Geom.}
\bvolume{44}
\bpages{550--570}.
\bmrnumber{1431005}
\end{barticle}
\endbibitem

\bibitem{Molchanov}
\begin{barticle}[author]
\bauthor{\bsnm{Mol{\v{c}}anov},~\bfnm{S.~A.}\binits{S.~A.}}
(\byear{1975}).
\btitle{Diffusion processes, and {R}iemannian geometry}.
\bjournal{Uspehi Mat. Nauk}
\bvolume{30}
\bpages{3--59}.
\bmrnumber{0413289 (54 \#\#1404)}
\end{barticle}
\endbibitem

\bibitem{Neel07}
\begin{barticle}[author]
\bauthor{\bsnm{Neel},~\bfnm{Robert}\binits{R.}}
(\byear{2007}).
\btitle{The small-time asymptotics of the heat kernel at the cut locus}.
\bjournal{Comm. Anal. Geom.}
\bvolume{15}
\bpages{845--890}.
\bmrnumber{2395259}
\end{barticle}
\endbibitem

\bibitem{Norris}
\begin{barticle}[author]
\bauthor{\bsnm{Norris},~\bfnm{J.~R.}\binits{J.~R.}}
(\byear{1993}).
\btitle{Path integral formulae for heat kernels and their derivatives}.
\bjournal{Probab. Theory Related Fields}
\bvolume{94}
\bpages{525--541}.
\bdoi{10.1007/BF01192562}
\bmrnumber{1201558}
\end{barticle}
\endbibitem

\bibitem{Setti}
\begin{bincollection}[author]
\bauthor{\bsnm{Setti},~\bfnm{Alberto~G.}\binits{A.~G.}}
(\byear{1993}).
\btitle{Eigenvalue estimates for the {L}aplacian with lower order terms on a
  compact {R}iemannian manifold}.
In \bbooktitle{Differential geometry: {R}iemannian geometry ({L}os {A}ngeles,
  {CA}, 1990)}.
\bseries{Proc. Sympos. Pure Math.}
\bvolume{54}
\bpages{521--527}.
\bpublisher{Amer. Math. Soc., Providence, RI}.
\bmrnumber{1216643}
\end{bincollection}
\endbibitem

\bibitem{Sh}
\begin{barticle}[author]
\bauthor{\bsnm{Sheu},~\bfnm{Shuenn~Jyi}\binits{S.~J.}}
(\byear{1991}).
\btitle{Some estimates of the transition density of a nondegenerate diffusion
  {M}arkov process}.
\bjournal{Ann. Probab.}
\bvolume{19}
\bpages{538--561}.
\bmrnumber{1106275}
\end{barticle}
\endbibitem

\bibitem{SZ}
\begin{barticle}[author]
\bauthor{\bsnm{Souplet},~\bfnm{Philippe}\binits{P.}} \AND
  \bauthor{\bsnm{Zhang},~\bfnm{Qi~S.}\binits{Q.~S.}}
(\byear{2006}).
\btitle{Sharp gradient estimate and {Y}au's {L}iouville theorem for the heat
  equation on noncompact manifolds}.
\bjournal{Bull. London Math. Soc.}
\bvolume{38}
\bpages{1045--1053}.
\bdoi{10.1112/S0024609306018947}
\bmrnumber{2285258}
\end{barticle}
\endbibitem

\bibitem{S}
\begin{bincollection}[author]
\bauthor{\bsnm{Stroock},~\bfnm{Daniel~W.}\binits{D.~W.}}
(\byear{1996}).
\btitle{An estimate on the {H}essian of the heat kernel}.
In \bbooktitle{It\^{o}'s stochastic calculus and probability theory}
\bpages{355--371}.
\bpublisher{Springer, Tokyo}.
\bmrnumber{1439536}
\end{bincollection}
\endbibitem

\bibitem{T}
\begin{barticle}[author]
\bauthor{\bsnm{Thalmaier},~\bfnm{Anton}\binits{A.}}
(\byear{1997}).
\btitle{On the differentiation of heat semigroups and {P}oisson integrals}.
\bjournal{Stochastics Stochastics Rep.}
\bvolume{61}
\bpages{297--321}.
\bdoi{10.1080/17442509708834123}
\bmrnumber{1488139}
\end{barticle}
\endbibitem

\bibitem{TW}
\begin{barticle}[author]
\bauthor{\bsnm{Thalmaier},~\bfnm{Anton}\binits{A.}} \AND
  \bauthor{\bsnm{Wang},~\bfnm{Feng-Yu}\binits{F.-Y.}}
(\byear{1998}).
\btitle{Gradient estimates for harmonic functions on regular domains in
  {R}iemannian manifolds}.
\bjournal{J. Funct. Anal.}
\bvolume{155}
\bpages{109--124}.
\end{barticle}
\endbibitem

\bibitem{Thompson}
\begin{barticle}[author]
\bauthor{\bsnm{Thompson},~\bfnm{James}\binits{J.}}
(\byear{2019}).
\btitle{Derivatives of {F}eynman-{K}ac semigroups}.
\bjournal{J. Theoret. Probab.}
\bvolume{32}
\bpages{950--973}.
\bdoi{10.1007/s10959-018-0824-2}
\bmrnumber{3959634}
\end{barticle}
\endbibitem

\bibitem{Va1}
\begin{barticle}[author]
\bauthor{\bsnm{Varadhan},~\bfnm{S.~R.~S.}\binits{S.~R.~S.}}
(\byear{1966}).
\btitle{Asymptotic probabilities and differential equations}.
\bjournal{Comm. Pure Appl. Math.}
\bvolume{19}
\bpages{261--286}.
\bdoi{10.1002/cpa.3160190303}
\bmrnumber{0203230}
\end{barticle}
\endbibitem

\bibitem{Va2}
\begin{barticle}[author]
\bauthor{\bsnm{Varadhan},~\bfnm{S.~R.~S.}\binits{S.~R.~S.}}
(\byear{1967}).
\btitle{Diffusion processes in a small time interval}.
\bjournal{Comm. Pure Appl. Math.}
\bvolume{20}
\bpages{659--685}.
\bmrnumber{0217881}
\end{barticle}
\endbibitem

\bibitem{Varopoulos2}
\begin{barticle}[author]
\bauthor{\bsnm{Varopoulos},~\bfnm{N.~Th.}\binits{N.~T.}}
(\byear{1989}).
\btitle{Small time {G}aussian estimates of heat diffusion kernels. {I}. {T}he
  semigroup technique}.
\bjournal{Bull. Sci. Math.}
\bvolume{113}
\bpages{253--277}.
\bmrnumber{1016211}
\end{barticle}
\endbibitem

\bibitem{Varopoulos1}
\begin{barticle}[author]
\bauthor{\bsnm{Varopoulos},~\bfnm{N.~Th.}\binits{N.~T.}}
(\byear{1990}).
\btitle{Small time {G}aussian estimates of heat diffusion kernels. {II}. {T}he
  theory of large deviations}.
\bjournal{J. Funct. Anal.}
\bvolume{93}
\bpages{1--33}.
\bdoi{10.1016/0022-1236(90)90136-9}
\bmrnumber{1070036}
\end{barticle}
\endbibitem

\end{thebibliography}

\end{document}